\newtheorem*{theoA}{Theorem A}
\newtheorem*{theoB}{Theorem B}
\newtheorem*{theoC}{Theorem C}
\newtheorem*{theoD}{Theorem D}
\newtheorem{theo}{Theorem}[section]
\newtheorem{lem}{Lemma}[section]
\newtheorem{exm}{Example}[section]
\newtheorem{defi}{Definition}[section]
\newtheorem{rem}{Remark}[section]
\newtheorem{ques}{Question}[section]
\newtheorem{proposition}{Proposition}
\newcommand{\be}{\begin{equation}}
	\newcommand{\ee}{\end{equation}}
\newcommand{\beas}{\begin{eqnarray*}}
	\newcommand{\eeas}{\end{eqnarray*}}
\newcommand{\bea}{\begin{eqnarray}}
	\newcommand{\eea}{\end{eqnarray}}
\numberwithin{equation}{section}
\begin{document}
	\title[On the characterization of uniqueness polynomials  ...]{On the characterization of uniqueness polynomials :
	\\Both Complex and P-adic Versions} 
		\author{Pratap Basak}
	\address{ Department of Mathematics, Cooch Behar Panchanan Barma University, West Bengal 736101, India.}
	\email{iampratapbasak@yahoo.com, pratapbasak6@gmail.com}
		\author{Sanjay Mallick}
		\address{ Department of Mathematics, Cooch Behar Panchanan Barma University, West Bengal 736101, India.}
		\email{sanjay.mallick1986@gmail.com, smallick.ku@gmail.com}
	\maketitle
	\let\thefootnote\relax
	\footnotetext{Mathematics Subject Classification(2020): 11E95; 12J25; 30D35.}
	\footnotetext{Key words and phrases: Uniqueness polynomial; unique range set; meromorphic function, entire function.}
	\footnotetext{Typeset by \LaTeX-2e\par Corresponding Author:- Sanjay Mallick}
	\maketitle
	\begin{abstract}

The problem ``A general characterization of uniqueness polynomial  for non-critically injective polynomials" has been remained open since the last two decades. In this paper, we explore this open problem. To this end, we  initiate a new approach  that also includes critically injective polynomials. We provide this characterization for both the complex and p-adic cases. We also provide various examples as an application of our results along with the  verification of the existing examples. Consequently, we find examples of unique range sets generated by non-critically injective polynomials with least cardinalities achieved so far and one of these results is sharp with respect to all the available formulas in the literature. Furthermore, we  cover the part of  least degree uniqueness polynomials. 
In this part, we also provide some sharp bounds. 
	\end{abstract}
	\section{\Large Introduction and Definitions}
At the outset, we define by  ${ \mathbb{K}}$  an algebraically closed field of characteristic zero and complete for a non-Archimedean absolute value. Let ${ \mathbb{L}}$ be a field, which is either ${ \mathbb{C}}$ or ${ \mathbb{K}}$. We denote by ${\mathcal{A}( \mathbb{L})}$ the ring of entire functions in ${ \mathbb{L}}$, and by ${\mathcal{M}( \mathbb{L})}$ the field of meromorphic functions in ${ \mathbb{L}}$. 
\par\vspace{0.1in}
	A non-constant polynomial ${P(z)}$ in $\mathbb{L}$ is called a uniqueness polynomial for meromorphic (entire) functions if for any two non-constant meromorphic (entire) functions ${f,g\in \mathcal{M}(\mathbb{L})}$ $( \mathcal{A}(\mathbb{L}))$, the identity ${P(f)=P(g)}$ implies ${f=g}$. In short, we denote it by UPM  (UPE).
\par\vspace{0.1in} The notion of uniqueness polynomials originated from the characterization of unique range sets which is defined as follows. 
\par\vspace{0.1in} For two non-constant meromorphic (entire) functions $f, g\in \mathcal{M}(\mathbb{L})$ $(\mathcal{A}(\mathbb{L}))$ and a set $S\subseteq \mathbb{L}$ if we find that $f^{-1}(S)=g^{-1}(S)$ implies $f= g$, then we say $S$ is a unique range set for meromorphic (entire) functions or URSM (URSE), where the elements in the pre-image set are counted according to their multiplicities. 
\par If  the multiplicities are not taken into account, then we say $S$ is  a unique range set for meromorphic (entire) functions ignoring multiplicities or URSM-IM (URSE-IM).  
\par\vspace{0.1in}
Since the inception of unique range sets in 1976 \cite{gross} to till date, a rigorous research have been performed by various authors \cite{Mallick-cmft,Frank Rainders,ccy-kodai95,Hxy-Bul,mallick-filo} in this area. During this journey, it became obvious that UPM (UPE) plays a fundamental role to generate any URSM (URSE) or URSM-IM (URSE-IM).  In fact, if ${S=\{a_1,a_2, \dots a_n\}}$ is a URSM (URSE) or URSM-IM (URSE-IM), then the generating polynomial ${P_S(z)=(z-a_1)(z-a_2) \cdots(z-a_n)}$ has to be a UPM (UPE). Hence, any polynomial that is not  UPM (UPE) can never generate  URSM (URSE) and URSM-IM (URSE-IM). 
As a natural consequence, the study of UPM's (UPE's) became inevitable in order to characterize URSM's (URSE's) and URSM-IM's (URSE-IM's).
\vspace{0.1in}\par In 2000, Fujimoto \cite{fuji-2000} first made a breakthrough in this direction. He provided a characterization of UPM out of nowhere. But his characterization was limited to a special kind of polynomials called critically injective polynomials \cite{ban12}. So, before proceeding further we recall the definition of the same. 
\begin{defi}\cite{ban12,fuji-2000}
	Let ${P(z)}$ be a polynomial such that ${P'(z)}$ has ${k}$ distinct zeros, say ${d_1,d_2, \cdots,d_k}$.  Then ${P(z)}$ is said to be a critically injective polynomial or CIP in brief, 
	if ${P(d_i)\neq P(d_j)}$ for ${i\neq j}$, where ${i,j\in \{1,2, \cdots,k\} }$. The number ${k}$ is called the  derivative index of ${P(z)}$.\par A polynomial, which is not CIP is called a non critically injective polynomial or NCIP in brief. 
\end{defi}
Following is the characterization of UPM by Fujimoto.
\begin{theoA}\cite{fuji-03}
	 Let $P(z)$ be a CIP with  no multiple zeros. Let $P'(z)$  has ${k}$ distinct zeros $d_1, d_2, \ldots, d_k$ with multiplicities $q_1, q_2, \ldots, q_k$ respectively. Then $P(z)$ is UPM in ${\mathbb{C}}$
      if and only if
\begin{eqnarray}\label{bhn2}
	\sum_{1 \leq \ell<m \leq k} q_{\ell} q_m&>&\sum_{\ell=1}^k q_{\ell} .
\end{eqnarray}
\end{theoA}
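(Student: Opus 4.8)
The plan is to prove both directions by analyzing the structure of the equation $P(f)=P(g)$ for non-constant meromorphic functions $f,g$, exploiting the critical injectivity hypothesis. The key object throughout will be the factorization of $P(z)-P(w)$ as a polynomial in two variables. Since $P$ has no multiple zeros and $P'$ vanishes to orders $q_1,\dots,q_k$ at the distinct critical points $d_1,\dots,d_k$, we have $\deg P = 1+\sum_{\ell=1}^k q_\ell$. I would first record that, because $P$ is critically injective, the only common factor structure one needs to rule out is the trivial diagonal factor $(z-w)$ of $P(z)-P(w)$.

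For the \emph{sufficiency} direction, assume $(\ref{bhn2})$ holds and suppose $P(f)=P(g)$ with $f\not\equiv g$. I would write $P(z)-P(w)=(z-w)Q(z,w)$ and study the nonconstant meromorphic function pair via a second main theorem / value distribution argument in the complex case (Nevanlinna theory). The strategy is to differentiate $P(f)=P(g)$ to obtain $P'(f)f'=P'(g)g'$, and then to count, using critical injectivity, the ramification contributed by the critical points. One shows that each factor $(z-d_\ell)^{q_\ell}$ of $P'$ forces $f$ (and $g$) to take the critical values with high multiplicity on the relevant preimages; assembling these multiplicity constraints into a defect relation, the quantity $\sum_{\ell<m}q_\ell q_m$ measures the total ramification available while $\sum_\ell q_\ell$ bounds what a nonconstant solution would require. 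Inequality $(\ref{bhn2})$ makes the ramification excessive, contradicting the second main theorem, so $f\equiv g$.

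For the \emph{necessity} direction, I would argue contrapositively: if $\sum_{\ell<m}q_\ell q_m\le \sum_\ell q_\ell$, I would exhibit a pair $f\not\equiv g$ with $P(f)=P(g)$. The natural construction is to seek $g=T\circ f$ for a nontrivial Möbius or rational transformation $T$, or more directly to build a nonconstant meromorphic solution of $Q(f,g)=0$ where $Q$ is the off-diagonal factor above. The failure of the inequality means the genus/ramification bound permits the curve $Q(z,w)=0$ to carry a nonconstant meromorphic parametrization, which yields the desired counterexample pair. Here I expect to invoke an explicit small-degree construction for the boundary cases.

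The main obstacle, I anticipate, is the sufficiency direction: converting the critical-injectivity and multiplicity data into a clean application of the second main theorem with the correct truncation levels, so that the combinatorial sum $\sum_{\ell<m}q_\ell q_m$ emerges exactly as the ramification term. Getting the bookkeeping of multiplicities right—ensuring that shared versus unshared critical values are counted correctly and that the diagonal factor $(z-w)$ is properly excised—is the delicate point. The p-adic case should run in parallel using the non-Archimedean analogue of the second main theorem, with the same combinatorial inequality governing the outcome; I would treat it by remarking on the uniform features rather than repeating the estimate.
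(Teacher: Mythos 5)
This statement is not proved in the paper at all: Theorem~A is quoted from Fujimoto \cite{fuji-03} as background, so there is no ``paper proof'' to compare against, and your attempt must be judged on its own. As it stands it is an outline with the actual mathematics deferred at both ends. In the sufficiency direction, the step you describe as ``assembling these multiplicity constraints into a defect relation'' so that $\sum_{\ell<m}q_\ell q_m$ ``emerges exactly as the ramification term'' \emph{is} the theorem; you give no mechanism for it --- no auxiliary function (Fujimoto's argument, like the proofs of Theorems 3.1--3.8 in this paper, hinges on carefully chosen auxiliary functions such as $\Phi=\frac{1}{f+c}-\frac{1}{g+c}$ and $\Psi=\frac{f'g'(f-g)^2}{\prod_i(f-d_i)(g-d_i)}$, together with lemmas controlling how $f$ and $g$ can share the critical points $d_\ell$), no specification of which values the second main theorem is applied to, and no truncation bookkeeping. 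You acknowledge this is ``the delicate point,'' but acknowledging the gap does not fill it. In the necessity direction you likewise only gesture: the genus argument for the off-diagonal curve $Q(z,w)=0$ is not carried out, and the boundary cases where \eqref{bhn2} fails ($k=1$; $k=2$ with $\min(q_1,q_2)=1$ or $q_1+q_2\le 4$; $k=3$ with $q_1=q_2=q_3=1$) each require an explicit nonconstant pair $f\not\equiv g$ with $P(f)=P(g)$, none of which is produced.

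There is also a concrete error in your closing remark. You claim the $p$-adic case ``should run in parallel \ldots with the same combinatorial inequality governing the outcome.'' Theorem~B of this paper shows this is false: over $\mathbb{K}$, a CIP is UPM as soon as $k\ge 3$, with no condition on the $q_\ell$'s, whereas over $\mathbb{C}$ the case $k=3$, $q_1=q_2=q_3=1$ fails \eqref{bhn2} and indeed fails to be UPM (cf.\ Theorem~C(i): no degree-$4$ polynomial is UPM in $\mathbb{C}$). So any sufficiency argument that runs uniformly over both fields would prove a false statement in one of them; the complex proof must exploit something genuinely Archimedean (e.g.\ the existence of zero-free, pole-free functions like $e^z$ parametrizing the exceptional pairs, which is exactly what makes degree-$4$ counterexamples possible over $\mathbb{C}$ and impossible over $\mathbb{K}$). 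This uniformity claim is a sign that the proposed ramification count, as sketched, is not tracking the right quantity.
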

For any polynomial with ${k\geq  4}$, the condition \eqref{bhn2} is always satisfied. For ${k=3}$, \eqref{bhn2} holds if only if ${\max (q_1,q_2,q_3)\geq  2 }$, and for ${k=2}$, \eqref{bhn2} holds if only if  ${\min(q_1,q_2)\geq  2}$ with ${q_1+q_2\geq  5}$.
\vspace{0.1in}\par Later in 2002, Wang \cite{taipei} added the  ${p}$-adic version of {\em Theorem A} to the literature by the following theorem. 
\begin{theoB}
		 Let $P(z)$ be a CIP and $P'(z)$  has ${k}$ distinct zeros $d_1, d_2, \ldots, d_k$ with multiplicities $q_1, q_2, \ldots, q_k$ respectively. Then $P(z)$ is UPM in ${\mathbb{K}}$
          if and only if  \begin{itemize}
		 	\item [(i)] ${k\geq  3}$; or  \item[(ii)] ${k=2}$ with ${\min (q_1,q_2)\geq  2 }$.
		 \end{itemize}
\end{theoB}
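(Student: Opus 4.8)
The plan is to follow the classical Fujimoto strategy, transplanted to the non-Archimedean setting where the Second Main Theorem is sharper. Throughout assume $P(f)=P(g)$ for non-constant $f,g\in\mathcal{M}(\mathbb{K})$ and write $P'(z)=c\prod_{i=1}^{k}(z-d_i)^{q_i}$. First I would record three reductions. (1) Comparing pole orders in $P(f)=P(g)$ shows that $f$ and $g$ have exactly the same poles with the same multiplicities, whence $\overline{N}(r,f)=\overline{N}(r,g)$ and $n\,T(r,f)=T(r,P(f))=T(r,P(g))=n\,T(r,g)+O(1)$. (2) A local computation at a zero $z_0$ of $f-d_i$ of order $m$: since $P(g)-P(d_i)$ must vanish there to order $m(q_i+1)$ and, by the CIP hypothesis, $P(d_j)\neq P(d_i)$ for $j\neq i$, either $g(z_0)=d_i$ with the same order $m$ (\emph{coincident} preimages), or $g(z_0)=\beta$ is a \emph{simple} root of $P-P(d_i)$ and then $g-\beta$ vanishes to order $m(q_i+1)\geq 2m$ (\emph{split} preimages). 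The crucial gain is this doubling of multiplicity at split preimages, strengthened to $m(q_i+1)\geq 3m$ when $q_i\geq 2$. (3) Recall the p-adic Second Main Theorem, which for distinct targets carries the favourable $-\log r$ term and needs no exceptional set; morally it counts one target better than its complex counterpart, which is precisely why I expect the complex thresholds $k\geq 4$ and ``$k=3,\ \max q_i\geq 2$'' to relax to $k\geq 3$ and ``$k=2,\ \min q_i\geq 2$''.

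The argument then splits on the auxiliary function $\Phi:=\sum_i q_i\frac{f'}{f-d_i}-\sum_i q_i\frac{g'}{g-d_i}$. Differentiating $P(f)=P(g)$ twice and using $P''/P'=\sum_i q_i/(z-d_i)$ gives the identity $\Phi=\frac{g''}{g'}-\frac{f''}{f'}$, so by the non-Archimedean lemma on logarithmic derivatives $m(r,\Phi)=O(\log r)$. If $\Phi\equiv 0$, then $g''/g'=f''/f'$ integrates to $g=Cf+D$, and substituting into $P(f)=P(g)$ forces the polynomial identity $P(X)=P(CX+D)$. Comparing leading coefficients gives $C^n=1$; the affine map $\psi(X)=CX+D$ then permutes the critical points $d_i$ and preserves their critical values, which are distinct by CIP, hence $\psi$ fixes every $d_i$. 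For $k\geq 2$ an affine map fixing two points is the identity, so $C=1,\ D=0$ and $f=g$. This step is exactly where $k\geq 2$ is needed: for $k=1$ the rotations about $d_1$ by $n$-th roots of unity survive, matching the exclusion of $k=1$.

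The heart of the matter is to rule out $\Phi\not\equiv 0$ under the stated hypotheses. Here the only poles of $\Phi$ are \emph{simple} poles at the split preimages of $f$ and of $g$, since the poles of $f,g$ and the coincident preimages cancel between the two sums; thus $N(r,\Phi)$ equals the number of distinct split preimages. I would then apply the p-adic Second Main Theorem to $f$, and symmetrically to $g$, with targets $d_1,\dots,d_k,\infty$, splitting $\sum_i\overline{N}(r,1/(f-d_i))$ into coincident and split contributions: the coincident part consists of zeros of $f-g$ and so is bounded by $\overline{N}(r,1/(f-g))\leq T(r,f)+O(1)$ via the ultrametric estimate for the non-Archimedean characteristic, while the split part is suppressed by the multiplicity-doubling of reduction (2), which forces each split preimage to be a ramification point of the \emph{other} function. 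Balancing these against $(k-1)T(r,f)$ yields a numerical inequality that is violated — forcing $\Phi\equiv 0$ and hence $f=g$ — exactly when $k\geq 3$, or $k=2$ with $\min(q_1,q_2)\geq 2$. I expect this multiplicity bookkeeping, tracking how the factor $q_i+1$ (and its reinforcement for $q_i\geq 2$) enters the count, to be the main obstacle, and the place where the two cases of the theorem separate.

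Finally, for the necessity direction I would exhibit counterexamples whenever the conditions fail. If $k=1$, then $P(z)=a(z-d_1)^n+b$, and any $n$-th root of unity $\omega\neq 1$ gives $g=d_1+\omega(f-d_1)\neq f$ with $P(f)=P(g)$. If $k=2$ with $\min(q_1,q_2)=1$, the residual relation $P(X)=P(Y)$ modulo $X=Y$ has a rational component with at most two points at infinity, which in the non-Archimedean setting admits a non-constant analytic parametrization $(f,g)$ with $f\neq g$ by the p-adic Picard/parametrization principle; this produces the required pair and shows $P$ is not UPM. Together with the sufficiency, this establishes the stated characterization.
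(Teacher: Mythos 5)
Your structural preparation is essentially correct: the identity $\Phi=\frac{g''}{g'}-\frac{f''}{f'}$, the disposal of the case $\Phi\equiv 0$ (where $g=Cf+D$, $C^n=1$, and the CIP hypothesis forces the affine map to fix every $d_i$, hence to be the identity when $k\ge 2$), the local computation showing that split preimages are ramification points of the other function with the gain $q_i+1$, and the fact that $\Phi$ has only simple poles, located exactly at split preimages, all check out. The genuine gap is that the heart of the theorem --- ruling out $\Phi\not\equiv 0$ under exactly the hypotheses (i)/(ii) --- is asserted rather than proved, and this is not just unfinished bookkeeping: the balancing you describe closes case (i) but \emph{cannot} close case (ii). Carrying out your plan, i.e.\ adding the $p$-adic second fundamental theorem for $f$ and for $g$ with targets $d_1,\dots,d_k,\infty$, absorbing the split terms $\overline{N}^{f}_{\mathrm{split}}$, $\overline{N}^{g}_{\mathrm{split}}$ into the ramification terms $N_0(r,1/g')$, $N_0(r,1/f')$ of the other function, and bounding the coincident part by $\overline{N}_{\mathrm{coin}}(r)\le \overline{N}(r,0;f-g)\le T(r,f-g)+O(1)\le T(r,f)+O(1)$ (valid ultrametrically because the poles of $f$ and $g$ coincide), one arrives at
\[
2(k-1)T(r,f)\le 2\overline{N}_{\mathrm{coin}}(r)+\overline{N}(r,\infty;f)+\overline{N}(r,\infty;g)-2\log r+O(1)\le 4T(r,f)-2\log r+O(1),
\]
that is, $(2k-6)T(r,f)+2\log r\le O(1)$. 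This is a contradiction precisely when $k\ge 3$, so part (i) does follow from your scheme. But for $k=2$ it reads $\log r\le T(r,f)+O(1)$, which is vacuous; and the extra hypothesis $\min(q_1,q_2)\ge 2$ cannot repair this, because the multiplicity reinforcement it provides (split preimages ramified to order $\ge 3$ instead of $\ge 2$) enters only through the bracketed quantities $\overline{N}^{f}_{\mathrm{split}}-N_0(r,1/g')$ that your scheme discards anyway: if $f,g$ happened to have \emph{no} split preimages at all, hypothesis (ii) would contribute literally nothing to the inequalities, yet they would still be consistent. The obstruction for $k=2$ sits in the dominant terms $2\overline{N}_{\mathrm{coin}}+\overline{N}(r,\infty;f)+\overline{N}(r,\infty;g)\sim 4T(r,f)$ against $2(k-1)T(r,f)=2T(r,f)$, which pure counting of the kind you set up cannot beat. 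Case (ii) --- which is exactly where the $p$-adic statement genuinely diverges from Fujimoto's complex Theorem A (no condition $q_1+q_2\ge 5$) --- therefore needs a different mechanism: either a shared-values analysis (when all preimages are coincident, $f$ and $g$ share $d_1,d_2,\infty$ with equal multiplicities, and one must then exploit $P(f)=P(g)$ to kill the resulting M\"obius relation, in the spirit of the paper's proof of Theorem \ref{t7}), or Wang's actual route, which studies the residual curve $\{P(X)=P(Y)\}\setminus\{X=Y\}$ and invokes the $p$-adic Picard theorem, all off-diagonal components having genus $\ge 1$; that is also why her theorem treats rational and non-Archimedean meromorphic functions simultaneously.

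Two smaller points. Your necessity argument for $k=2$, $\min(q_1,q_2)=1$ asserts without proof that the residual curve has a rational component; this is true but should be exhibited, and it can be, explicitly: after normalizing $P'(z)=cz^{q_1}(z-1)$, $n=q_1+2$, the pencil $Y=tX$ gives $f(t)=\dfrac{n(1-t^{n-1})}{(n-1)(1-t^{n})}$, $g(t)=tf(t)$, non-constant rational (hence $p$-adic meromorphic) functions with $P(f)=P(g)$ and $f\ne g$. Finally, for calibration: the paper never proves Theorem B --- it is quoted from Wang \cite{taipei} --- and the paper's own machinery (the proof of Theorem \ref{t3}, via the auxiliary function $\frac{1}{f+c}-\frac{1}{g+c}$, Lemma \ref{lem4}, and a doubled application of the $p$-adic second fundamental theorem) recovers only the $k\ge 3$ part for CIPs, the authors explicitly declining to treat $k=2$. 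So your plan for (i) is a legitimate alternative in the same spirit as the paper's, but part (ii) remains unproved in your proposal.
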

 \begin{rem}\label{remm1}
    Note that any polynomial with ${k=1}$, can never be a UPM.
    \end{rem}
    \begin{rem}\label{remm2}
    	Any polynomial with ${k=2}$ is necessarily CIP. Because, if ${P(z)}$ is an NCIP with ${k=2}$, then ${P'(z)}$ is of the form ${P'(z)=c_1(z-a)^p(z-b)^q}$ with ${P(a)=P(b)}$,  for some non-zero constant  ${c_1}$. Therefore, ${P(z)-P(a)=c_2(z-a)^{p+1}(z-b)^{q+1}}$, for some non-zero constant ${c_2}$. That is, ${P(z)}$ is a polynomial of degree ${p+q+2}$, which is impossible as degree of ${P(z)}$ is ${p+q+1}$. Hence ${P(z)}$ can not be NCIP.
    \end{rem} 
    \begin{rem} \label{remm21}
In view of {\em Remark \ref{remm1}-\ref{remm2}}, for any NCIP, we must have $k\geq 3$.
    \end{rem}
From {\em Definition 1.1}, it is obvious that any polynomial is either CIP or NCIP,  and {\em Theorem A-B} are applicable only for  CIP's. Hence the characterization of UPM for NCIP's becomes  a natural quest to the research community. 
\vspace{0.1in}\par After {\em Theorem A-B}, during the last two decades, several  results related to this problem have been obtained by many mathematicians \cite{An-Complex Variable, An-11, ban12, Mallick-cmft, mallick-filo, mallick-tbi}. 
But the problem \begin{center}
{\em`` a general characterization of NCIP to be UPM "}
\end{center}
 still remains open in the literature.
\par In this paper, we explore this open part of characterization. At the same time, we frame our main results in  a way that both CIP and NCIP can be characterized through these results. Since for any NCIP we have $k\geq 3$, so we prove our results up to the same limit of $k$. 
 Furthermore, in this characterization we take both the complex and {${p}$-adic} cases into account. 
\vspace{0.1in}\par In section-4, as applications of our main results, we exhibit a handsome number of examples of NCIP that are UPM.  At the same time, we also verify the existing examples of NCIP that are UPM. 
\par Moreover, applying a theorem of \cite{Ripan} along with the help of our main results, we shall obtain a URSM and a URSM-IM  generated by an NCIP with the least cardinality achieved so far. Till date, the least cardinality of a URSM generated by an NCIP \cite{mallick-filo, mallick-tbi} is $17$ and for URSM-IM it can be found to be 22 \cite{mallick-filo}.  Here we obtain these values as $14$ and $19$ respectively, which improve the existing results significantly.   In fact, the cardinality obtained for  URSM-IM  is the best possible with respect to  all the available formulas of URSM-IM \cite{An-11, ban12, Mallick-cmft, mallick-filo}.
\vspace{0.1in}\par Another thing we would like to mention is that a parallel research on the least degree UPM (UPE) also occurred several times in the literature  during the last four decades. Since, we provide a general characterizaton of UPM (UPE) icluding CIP and NCIP both, so on this occasion we also provide some results on the least degree of the same in both complex and {${p}$-adic} cases. To this end, we define the following  notations.
\begin{eqnarray}
	\nonumber  \mu_{I}&=& \text{ least degree of UPM for CIP, }\qquad \epsilon_I\,\,\,=\,\,\,\text{ least degree of UPE for CIP, }\\\nonumber  \mu_N&=&\text{ least degree of UPM for NCIP, }\quad \epsilon_N\,\,\,=\,\,\,\text{ least degree of UPE for NCIP. }
\end{eqnarray}
Below we briefly discuss about the lower bounds of ${ \mu  _I}$, ${ \mu  _N}$, ${ \epsilon_I}$ and ${ \epsilon_N}$ in complex and ${p}$-adic cases separately. 
\par\vspace{0.1in} {\textbf{\underline{\Large Complex case:}}} In 1995, Li-Yang \cite{ccy-kodai95} proved that any polynomial of degree 2 or 3 is not UPE in ${\mathcal{A}( \mathbb{C})}$. For polynomials of degree 4, they proved the following result.
\begin{theoC}
	Let $P(z)=z^4+a_3 z^3+a_2 z^2+a_1 z+a_0$. Then
	\begin{itemize}
		\item [(i)] $P$ is not a UPM in ${\mathbb{C}}$.
          \item[(ii)]  P is a UPE in ${\mathbb{C}}$
           if and only if
		$\dfrac{a_3^3}{8}-\dfrac{a_2 a_3}{2}+a_1 \neq 0 .$
	\end{itemize}
\end{theoC}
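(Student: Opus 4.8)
The plan is to reduce to a depressed quartic, translate the uniqueness property into the (non-)existence of a non-constant parametrization of an associated plane cubic, and then read off both statements from the geometry of that cubic. First I would kill the cubic term by the substitution $z = w - a_3/4$. A translation does not affect the UPM/UPE property, since for $Q(w) = P(w - a_3/4)$ we have $Q(f) = Q(g) \iff P(f - a_3/4) = P(g - a_3/4)$ and $f = g \iff f - a_3/4 = g - a_3/4$; so it suffices to treat $P(w) = w^4 + p w^2 + q w + r$. Computing the linear coefficient of $P(w - a_3/4)$ gives $q = \frac{a_3^3}{8} - \frac{a_2 a_3}{2} + a_1$, so the hypothesis in (ii) is exactly $q \neq 0$. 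I then factor
\[
P(X) - P(Y) = (X - Y)\,R(X,Y), \qquad R(X,Y) = X^3 + X^2 Y + X Y^2 + Y^3 + p(X+Y) + q .
\]
Since $P(f) = P(g)$ is equivalent to $f = g$ or $R(f,g) = 0$, the polynomial $P$ is a UPM (resp. UPE) if and only if there is \emph{no} pair of non-constant meromorphic (resp. entire) functions $f \neq g$ lying on the affine cubic $\Gamma : R(X,Y) = 0$.

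The core of the argument is the geometry of $\Gamma$. Its leading form factors as $(X+Y)(X^2+Y^2) = (X+Y)(X+iY)(X-iY)$, so $\Gamma$ meets the line at infinity in the three \emph{distinct} points given by the directions $X+Y$ and $X \pm iY$; inspecting the homogeneous partials at these points shows $\Gamma$ is smooth at infinity for every $p,q$. Substituting $Y = -X - \gamma$ into $R$ yields $-2\gamma X^2 - 2\gamma^2 X - (\gamma^3 + p\gamma) + q$, which vanishes identically only for $\gamma = 0$ and $q = 0$; analogous substitutions rule out the other two candidate lines unless $q = 0$ as well. As a reducible cubic must contain a line, $q \neq 0$ forces $\Gamma$ to be irreducible. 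Finally $R(X,X) = 4X^3 + 2pX + q \not\equiv 0$, so the diagonal $\{X=Y\}$ is never a subset of $\Gamma$; hence any non-constant parametrization automatically satisfies $f \not\equiv g$, and neither coordinate can be constant (a constant coordinate would force the other to satisfy a constant-coefficient cubic, hence be constant too).

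For part (i) I use that every plane cubic has arithmetic genus $1$, so the normalization of any component of $\Gamma$ has genus $0$ or $1$ and therefore admits a non-constant meromorphic uniformization (a rational parametrization when genus $0$, the Weierstrass map $\mathbb{C} \to E$ when genus $1$). This produces non-constant meromorphic $f \neq g$ with $R(f,g) = 0$, so $P$ is never a UPM, for any coefficients. For part (ii), if $q = 0$ then $P(w) = w^4 + p w^2 + r$ is even, whence $P(f) = P(-f)$; taking $f$ non-constant entire and $g = -f \neq f$ shows $P$ is not a UPE. If $q \neq 0$, then $\Gamma$ is an irreducible cubic, smooth at infinity with exactly three points there, so its normalization $\widetilde{\Gamma}$ is a compact Riemann surface of genus $g \in \{0,1\}$ and $\widetilde{\Gamma} \setminus \{\text{3 points}\}$ has Euler characteristic $2 - 2g - 3 = -1 - 2g < 0$, hence is hyperbolic. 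A non-constant entire pair $(f,g)$ on $\Gamma$ is a holomorphic map $\mathbb{C} \to \Gamma$ with image in the affine part; as $\mathbb{C}$ is smooth this lifts through the normalization to a holomorphic map $\mathbb{C} \to \widetilde{\Gamma} \setminus \{\text{3 points}\}$, which must be constant, a contradiction. Therefore $P$ is a UPE, establishing the equivalence.

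The step I expect to be most delicate is the genus-$0$ subcase of (ii), which occurs precisely when $\Gamma$ acquires a node or cusp (at $8p^3 + 27q^2 = 0$): there one must be certain that, although $\Gamma$ is rational, the \emph{three distinct} punctures at infinity still render the normalized affine curve hyperbolic, and that the entire map genuinely factors through the normalization despite the affine singularity. Classically, Li--Yang reach the same conclusion by applying Nevanlinna's Second Main Theorem to $f$ relative to the three branch values at infinity, which is the computational counterpart of the hyperbolicity argument sketched above; I would keep that route in reserve in case the lifting step needs a fully analytic justification.
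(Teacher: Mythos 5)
Your proposal is correct, but there is nothing in the paper to compare it against: Theorem C is imported verbatim from Li--Yang \cite{ccy-kodai95} and is never proved here; the paper only uses it as a black box (in the proof of Theorem \ref{tt7}, where the hypothesis $\frac{a_3^3}{8}-\frac{a_2a_3}{2}+a_1\neq 0$ is shown to force critical injectivity). Judged on its own, your argument goes through: the depressed-quartic computation giving $q=\frac{a_3^3}{8}-\frac{a_2a_3}{2}+a_1$ is right; the factorization $P(X)-P(Y)=(X-Y)R(X,Y)$ and the equivalence of UPM/UPE failure with the existence of a non-constant analytic pair on $\Gamma\colon R=0$ are right (meromorphic functions form a field, so a vanishing product has an identically vanishing factor); the line-component analysis (any linear component must have one of the three asymptotic directions $X+Y$, $X\pm iY$, and each case forces $q=0$) correctly yields irreducibility of $\Gamma$ for $q\neq 0$; and smoothness of $\Gamma$ at its three distinct points at infinity makes the punctured normalization have Euler characteristic $-1-2g<0$, so the hyperbolicity (Picard-type) argument rules out non-constant entire pairs, while the genus $\leq 1$ uniformization produces the meromorphic pairs needed for part (i). This is a genuinely different route from the value-distribution proof in the cited source (Second Main Theorem relative to the branch values, essentially the fallback you mention) and from the techniques this paper uses elsewhere -- SMT-based counting arguments in Theorems \ref{t1}--\ref{themm9} and explicit counterexample functions in the proof of Theorem \ref{tt8}. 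What your route buys: part (i) becomes uniform in the coefficients, and the Li--Yang condition acquires a conceptual meaning, namely irreducibility of $R$, equivalently non-evenness of the depressed quartic (your observation that $q=0$ gives $g=-f$ settles the ``only if'' direction in one line). What the classical route buys: it avoids the normalization/lifting machinery -- the one step you rightly flag as delicate, though it is standard -- and stays inside Nevanlinna theory, which is the toolkit the rest of the paper is written in.
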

 {In this paper (see {\em Theorem \ref{tt7}} ), we prove that  any polynomial that satisfies the conditions} { of {\em Theorem C}, is necessarily a CIP. Thus we conclude ${\epsilon_I=4}$.} 
 Again, {\em Theorem A} gives ${\mu_{I}=5}$ and it is achieved when ${k=3}$. Moreover, in 2020, S. Mallick \cite{mallick-filo}  gave an example of UPM for NCIP of degree ${7}$ (see Example 4.3 of \cite{mallick-filo}) and an example of UPE for NCIP of degree ${5}$ (see Example 4.4 of \cite{mallick-filo}). Therefore, it is clear that ${ \mu_N\leq  7}$ and ${ \epsilon_N\leq  5}$. So, to find the exact lower bounds of ${ \mu  _N}$ and ${ \epsilon_I}$, we put forward the following questions. 
\begin{ques}\label{question1}
	Does there exist any UPM for NCIP  in $ \mathbb{C}$ of degree lesser equal to ${6}$?
\end{ques}
\begin{ques}\label{question2}
	Does there exist any UPE for NCIP  in $ \mathbb{C}$  of degree lesser equal to ${4}$?
\end{ques}
In this paper, we provide  the answer of {\em Question \ref{question1}} in affirmative. Not only that, for a good reason, we propose a conjecture that $\mu_N=6$. However, we find the answer of {\em Question \ref{question2}} as negative, and thereby we establish $\epsilon_N=5$.
\par\vspace{0.1in} {\textbf{\underline{\Large ${p}$-Adic case:}}} From {\em Theorem B}, it is clear that ${ \mu  _I=4}$ and it is achieved when ${k=3}$. In \cite{IMathBoutabaa}, authors proved that the least cardinality of unique range set is ${ 3}$. Thus, we conclude ${ \epsilon_I\leq  3}$. In this paper, we show that ${ \epsilon_N, \mu  _N\leq  6}$.
\vspace{0.1in}\par 
To prove the main results we have initiated a completely new approach. So  at first we have to formulate some basic concepts, which will play a central role to our investigations and main results. For that, we move to the next section. 
\section{{\Large { Basic Set-up }}}
Let ${P(z)}$ be any polynomial of degree ${n}$ in $\mathbb{L}$ having simple zeros only. Let \begin{eqnarray}\label{p'}
	P'(z)=a (z-d_1)^{q_1}(z-d_2)^{q_2} \dots(z-d_k)^{q_k},
\end{eqnarray}
where ${a\neq 0}$ is constant and ${q_i(\geq  1)\in\mathbb{N}}$.
Let ${S=\{z:P'(z)=0\}}$.  By ${|S|}$ we mean the cardinality of ${S}$. Observe that we can always express the set ${S}$ as \begin{eqnarray}\label{f33}
	S=S_1\cup S_2\cup \dots\cup S_s,
\end{eqnarray}
such that \begin{itemize}
	\item [(i)] ${S_i\cap S_j}= \phi $ for ${i\neq j}$;
	\item [(ii)] ${|S_1|\leq  |S_2|\leq   \dots\leq  |S_{s-1}|\leq  |S_s|}$; \item [(iii)] for each ${d_i,d_j\in S_\lambda }$ with ${i\neq j}$, ${P(d_i)\neq P(d_j)}$; \item [(iv)] for each ${d_j\in S_\lambda }$, there is one and only one element ${d_i\in S_l}$ for each ${l\in \{\lambda+1,\lambda+2, \dots, s\}}$, such that ${P(d_j)=P(d_i)}$. 
\end{itemize}
In another words, the set ${S}$ is the union of distinct disjoint set(s) ${S_i}$ such that ${P(z)}$ is injective over each set ${S_i}$ but not injective over ${S_i\cup S_j}\cup  \dots $ for any ${i,j, \dots }$. Obviously, ${P(z)}$ is injective over ${S}$ if and only if ${S_j= \phi }$ for ${j=1,2, \dots, S_{s-1}}$. 
\par\vspace{0.1in} Let
\begin{eqnarray}
	\nonumber S_1&_=&\{d_1,d_2, \dots,d_{r_{1}} \};\\\nonumber S_2&=&\{d_{r_{1}+1},d_{r_{1}+2}, \dots,d_{r_1+r_2}\};\\\nonumber \dots & \dots & \dots \quad \dots \\\nonumber S_{s-1}&=& \{d_{r_{1}+r_2+ \dots+r_{s-2}+1},d_{r_1+r_2+ \dots+r_{s-2}+2}, \dots, d_{r_1+r_2+ \dots+r_{s-2}+r_{s-1}}    \}; \\\nonumber S_{s}&=& \{d_{r_{1}+r_2+ \dots+r_{s-1}+1},d_{r_1+r_2+ \dots+r_{s-1}+2}, \dots, d_{r_1+r_2+ \dots+r_{s-1}+r_{s-1}}, \dots,d_{r_1+ \dots+r_k}    \}.
\end{eqnarray}
\par \vspace{0.1in} Take ${d_1}$ form ${S_1}$. Now, (iv) indicates that there is one and only one element ${d_j}$ in ${S_2}$ such that ${P(d_1)=P(d_j)}$. Without loss of generality, let us take ${d_j=d_{r_1+1}}$. Thus, we have paired ${d_1}$ with ${d_{r_1+1}}$ in the sense that $$P(d_1)=P(d_{r_1+1}).$$
\par Similarly, $$P(d_{r_1+1})=P(d_{r_1+r_2+1})= \dots=P(d_{r_1+ \dots+r_{s-1}+1}).$$ 
\includegraphics[width=1\linewidth]{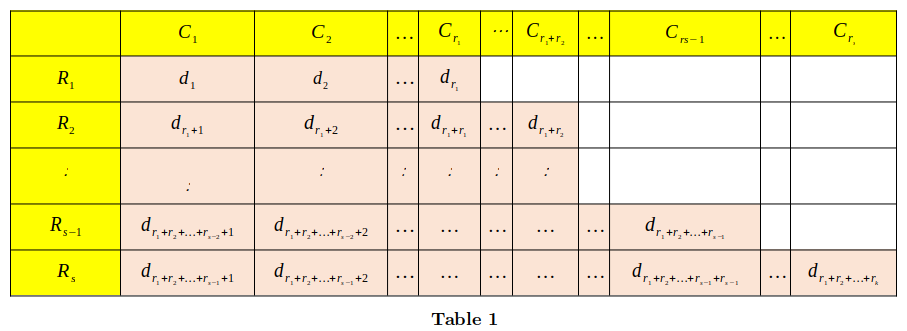}
We have arranged the elements of these set(s) in {\em Table 1} in such a way that each row ${R_l}$ denotes the set ${S_l}$. Thus for any ${d_i, d_j}$ in column ${C_l}$, we have ${P(d_i)=P(d_j)}$, and for any ${d_i,d_j}$ in ${R_l}$, we have ${P(d_i)\neq P(d_j)}$ for ${i\neq j}$.\\
\includegraphics[width=1\linewidth]{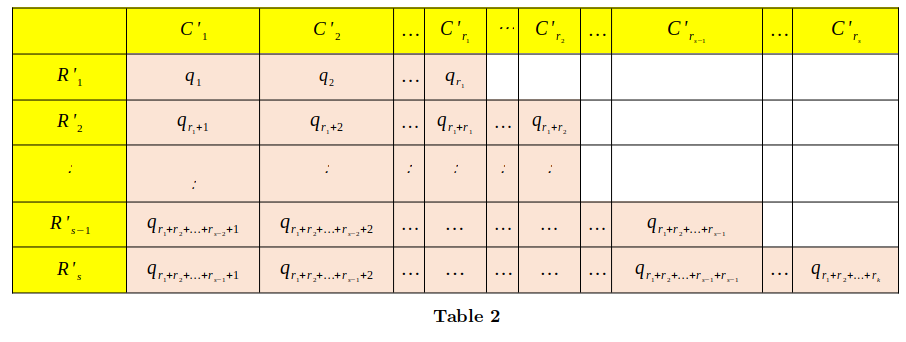} {\em Table 2} indicates the corresponding multiplicity ${q_j}$ of ${d_j}$ in ${P'(z)}$. \par \vspace{0.1in} Let \begin{eqnarray}
	\nonumber A_1&=&\{q_l\in C_1': q_l\in R'_i  \text{ but }q_l\not\in R'_j  \text{ with  }i\neq j \}.
\end{eqnarray}
Therefore, ${A_1}$ is the set of all ${q\in C'_1  \text{ such that  }q  \text{ is not repeated in  }C_1'}$. Let the set ${B_1}$ be defined corresponding to the set ${A_1}$ as:\begin{eqnarray}
	\nonumber B_1&=&\{d_l\in C_1: q_l\in A_1\}.
\end{eqnarray}
Here, each ${d_l}$ in ${B_1}$ corresponds to ${q_l}$ in ${A_1}$. Similarly, we can define  ${A_2,A_3, \dots,A_{r_{s-1}},A_{r_s}}$; and ${B_2,B_3, \dots,B_{r_{s-1}},B_{r_s}}$. Let \begin{eqnarray}\label{f34}
	A&=&\bigcup_{i=1}^{r_{s}}A_i.
\end{eqnarray} 
\begin{eqnarray}\label{f35}
	B&=&\bigcup_{i=1}^{r_{s}}B_i.
\end{eqnarray} 
\\Let \begin{eqnarray}
	\nonumber A_{i}(H_1)&=&\{q\in A_i: q>q'  \text{ for all }q'\in C_i'\setminus A_i\}.
\end{eqnarray}
 Let \begin{eqnarray}
 	\nonumber \left|A_i(H_1)\right|&=&n_i.
 \end{eqnarray}
  Since all elements in ${A_i(H_1)}$ are distinct, we can rearrange the elements of ${A_i(H_1)}$ in decreasing order as  $$q_{i1}>q_{i2}>, \dots,>q_{in_i}.$$ Let\begin{eqnarray}
  	\nonumber A_i(H_2)&=&\{q_{ij}\in A_i(H_1):q_{i1}+1<2(q_{ij}+1); 1\leq  j\leq  n_i\}.
  \end{eqnarray}
   Observe that ${A_i(H_2)\subseteq A_i(H_1)}$ and \begin{eqnarray}
   	\nonumber  \max \{q:q\in A_i(H_1)\}= q_{i1}\in  A_{i}(H_2).
   \end{eqnarray}
   Therefore, ${A_{i}(H_1)\neq  \phi \Longleftrightarrow A_i(H_2)\neq  \phi }$. Let ${B_i(H_2)}$ corresponds to the sets ${A_i(H_2)}$. Let \begin{eqnarray}\label{u1}
   	t=\left|\left\{{i:B_i(H_2)\neq  \phi }\right\}\right|\qquad  \text{ and } \qquad t'= \left| \bigcup\limits_{i}B_i(H_2)\right|.
   \end{eqnarray}
   Obviously, ${t'\geq  t}$.
   \begin{rem}\label{remm3}
   	Observe that ${P(z)}$ is CIP if and only if ${t=t'=\text{{Derivative index of ${P(z)}$}}}$.
   \end{rem}
   
\section{\Large Main Results}
\subsection{ \bf Results on the  characterization of uniqueness polynomials }
\begin{theo}\label{t1}
	For a polynomial ${P(z)}$, if ${t\geq  1}$ and ${t'\geq  5}$ ${(t'\geq 4)}$, then ${P(z)}$ is UPM (UPE) in $\mathbb{K}$.
\end{theo}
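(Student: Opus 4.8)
The strategy is to argue by contradiction through $p$-adic value distribution theory, so I assume there are non-constant $f,g\in\mathcal{M}(\mathbb{K})$ with $P(f)=P(g)$ but $f\neq g$; the entire case $f,g\in\mathcal{A}(\mathbb{K})$ runs in parallel. Comparing characteristic functions gives $n\,T(r,f)=T(r,P(f))=T(r,P(g))=n\,T(r,g)+O(1)$, so $T(r):=T(r,f)$ and $T(r,g)$ are interchangeable up to $O(1)$. The first real step records the \emph{local correspondence} forced by Table~1: if $f(z_0)=d_i$ with multiplicity $p$, then since $P(z)-P(d_i)$ vanishes to order exactly $q_i+1$ at $d_i$, the value $c_i:=P(d_i)$ is attained by $P(f)$ to order $p(q_i+1)$ at $z_0$; because $P(g)=P(f)$, the point $g(z_0)$ must be a root of $P(w)=c_i$, i.e. either $g(z_0)=d_{i'}$ with $d_{i'}$ in the same column as $d_i$ and $p(q_i+1)=p'(q_{i'}+1)$, or $g(z_0)$ is a simple root of $P(w)=c_i$ attained to order $p(q_i+1)$.

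The key use of the hypotheses is the following dichotomy at each $d_i$-point of $f$, valid whenever $d_i$ is \emph{good} (that is $q_i\in A_i(H_2)$). By construction $q_i$ is unique in its column, exceeds every repeated multiplicity there, and obeys $q_{i1}+1<2(q_i+1)$ with $q_{i1}$ the (unique) largest multiplicity of the column. Uniqueness of $q_i$ kills the matching $i'\neq i,\ p=p'$; and if $g(z_0)=d_{i'}$ with $q_{i'}>q_i$, the relation $p(q_i+1)=p'(q_{i'}+1)$ together with $q_{i'}+1\leq q_{i1}+1<2(q_i+1)$ forces $p<2p'$, which together with $p>p'$ excludes $p'=1$; in every remaining non-coincidence configuration ($q_{i'}<q_i$, or a simple root, where $q_i\geq1$ makes the order $p(q_i+1)\geq2$) the partner $g$ is again forced to be ramified at $z_0$. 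Thus I obtain the clean statement: \emph{for a good $d_i$, every $d_i$-point of $f$ is either a coincidence point $f(z_0)=g(z_0)=d_i$ or a ramification point of $g$.} This is exactly where the half-condition earns its keep, as it converts the otherwise possible ``$f$ ramified, $g$ simple'' matchings into genuine ramification of $g$.

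I then pass to a global count. Splitting $\overline N(r,d_i;f)$ into its coincidence and non-coincidence parts and summing over the $t'$ good critical points, the coincidence parts are dominated by $\overline N(r,0;f-g)$ and the non-coincidence parts, being disjoint for distinct $d_i$ and each carrying a unit of ramification of $g$, are dominated by the ramification counting function of $g$. Feeding this into the $p$-adic Second Main Theorem applied to $f$ against the $t'$ good values together with $\infty$ gives, schematically,
\[
t'\,T(r)\ \leq\ \overline N(r,\infty;f)+\overline N(r,0;f-g)+N_{\mathrm{Ram}}(r,g)+o(T(r)).
\]
Bounding each term on the right by a fixed multiple of $T(r)$ collapses the inequality to $t'\leq 4$ in the meromorphic case, contradicting $t'\geq 5$; in the entire case the pole term $\overline N(r,\infty;f)$ vanishes because $\infty$ is Picard-exceptional, the budget drops by one unit, and the inequality collapses to $t'\leq 3$, contradicting $t'\geq 4$. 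This is precisely the mechanism producing the two thresholds, and the hypothesis $t\geq1$ (here already implied by $t'\geq5$, resp. $t'\geq4$) only ensures the target set of good values is non-empty so that the Second Main Theorem is applied non-trivially; Remark~\ref{remm3} shows this recovers the CIP count $t=t'=k$ of Theorems~A and~B as a special case.

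The step I expect to be the main obstacle is the rigorous constant-tracking in the displayed inequality: bounding $N_{\mathrm{Ram}}(r,g)$ and the coincidence divisor $\overline N(r,0;f-g)$ sharply enough, in the correct $p$-adic form with the right error terms, so that the thresholds come out exactly at $5$ and $4$ rather than something larger. In particular one must verify that the non-coincidence $d_i$-points inject into $g$'s ramification with no loss of weight and no overcounting across columns, and that the pole term is the only quantity distinguishing the meromorphic case from the entire case — this last point being what pins the gap between $t'\geq5$ and $t'\geq4$.
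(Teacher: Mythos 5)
Your proposal is correct and follows essentially the same route as the paper: your local dichotomy (every $d_i$-point of $f$ with $d_i\in\bigcup_i B_i(H_2)$ is either a coincidence point or a ramification point of $g$, proved by the same multiplicity comparison built into the definition of $A_i(H_2)$) is exactly the paper's Lemma 6.2 in contrapositive form, and your global count — coincidence points into $\overline{N}(r,0;f-g)$, non-coincidence points into $\overline{N}(r,0;g')$, then the $p$-adic second main theorem over the $t'$ good values and $\infty$ — mirrors the paper's proof, which uses $\Phi=\tfrac{1}{f+c}-\tfrac{1}{g+c}$ in place of $f-g$ so that the shared poles are absorbed as zeros of $\Phi$. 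The only cosmetic divergence is in the entire case: you gain the extra unit from $\overline{N}(r,\infty;f)=0$ while keeping the crude $2T$ bound on $\overline{N}(r,0;g')$, whereas the paper's remark instead sharpens the ramification term to $T(r,g)+S(r,g)$ via the quotient $g'/(g-\alpha)$; both bookkeepings land on the same thresholds $t'\geq 5$ and $t'\geq 4$.
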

\begin{theo}\label{t2}
	For a polynomial ${P(z)}$, if ${t\geq  1}$ and ${t'\geq  6}$ ${(t'\geq 5)}$, then ${P(z)}$ is UPM (UPE) in $\mathbb{C}$.
\end{theo}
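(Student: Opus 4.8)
The plan is to prove Theorem \ref{t2} in exact parallel with Theorem \ref{t1}, the only structural change being that the $p$-adic Second Main Theorem is replaced by Nevanlinna's classical Second Main Theorem over $\mathbb{C}$. Since the complex version carries the ramification term in the form $(q-2)\,T(r,f)$ in place of the stronger $(q-1)\,T(r,f)$ available over $\mathbb{K}$, every counting estimate that controls $t'$ forfeits exactly one unit; this is precisely why the hypotheses must sharpen from $t'\geq 5$ (resp.\ $t'\geq 4$) in the $p$-adic case to $t'\geq 6$ (resp.\ $t'\geq 5$) here. Accordingly I would first assume, for contradiction, that there exist non-constant $f,g\in\mathcal{M}(\mathbb{C})$ (resp.\ $\mathcal{A}(\mathbb{C})$) with $P(f)=P(g)$ but $f\not\equiv g$, and work towards a characteristic-function inequality that bounds $t'$ from above by $5$ (resp.\ $4$), contradicting the hypothesis and forcing $f\equiv g$.

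The analytic engine is the ramification mechanism attached to the distinguished critical points collected in $\bigcup_i B_i(H_2)$, whose cardinality is exactly $t'$. First I would record the local principle that if $f-d$ vanishes to order $m$ at a point, where $d$ is a zero of $P'$ of multiplicity $q$, then $P(f)-P(d)$ vanishes there to order $m(q+1)$; combined with $P(f)=P(g)$, this confines the value $g$ can assume at such a point to the fibre $P^{-1}(P(d))$, that is, to $d$ itself, to a critical point linked to $d$ through \emph{Table 1}, or to a simple root of $P(z)-P(d)$. The two defining conditions of $B_i(H_2)$ now do the decisive work: the requirement that the multiplicity be the unique maximum of its column, together with the doubling inequality $q_{i1}+1<2(q_{ij}+1)$, rules out every cross-matching to a linked critical point or to a simple root on grounds of incompatible vanishing orders, so that each $d\in\bigcup_i B_i(H_2)$ behaves as a totally ramified value shared by $f$ and $g$ with matching multiplicities. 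The hypothesis $t\geq 1$ guarantees that this distinguished configuration is non-empty, while $t'$ records how many such values are available to be fed into the main theorem.

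With the $t'$ distinguished values in hand I would apply Nevanlinna's Second Main Theorem to $f$ (and symmetrically to $g$), adjoining the value $\infty$ in the meromorphic case to account for the common poles of $f$ and $g$; the ramification bound $\overline N(r,d;f)\leq \tfrac12 N(r,d;f)$ at each distinguished value, combined with the coincidence of the $d$-points of $f$ and $g$, collapses the right-hand side and produces the desired upper bound on $t'$. The entire case is lighter by exactly one unit because the value $\infty$ is then omitted, which is the structural origin of the gap between the UPM and UPE thresholds. I expect the principal obstacle to lie in the bookkeeping of the second step: verifying rigorously that column-maximality together with the doubling condition excludes \emph{every} admissible cross-matching for each multiplicity occurring in a given fibre, and simultaneously keeping the truncated counting functions aligned so that the shared-value contributions cancel cleanly against the $(q-2)$ term inherited from the complex Second Main Theorem. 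Once that combinatorial matching is secured, the resulting inequality is incompatible with $T(r,f)+T(r,g)\not\equiv S(r,f)+S(r,g)$, so the assumption $f\not\equiv g$ is untenable and $f\equiv g$ follows, which completes the proof.
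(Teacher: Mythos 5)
Your overall frame---argue by contradiction, feed distinguished values into the second fundamental theorem, and explain the complex/$p$-adic gap of one unit by the extra strength (the $-\log r$ term) of the $p$-adic SMT---agrees in spirit with the paper. But the analytic engine you propose fails at its central step. You claim that column-maximality plus the doubling inequality $q_{i1}+1<2(q_{ij}+1)$ rule out \emph{every} cross-matching, so that each $d\in\bigcup_i B_i(H_2)$ becomes a totally ramified value shared by $f$ and $g$, giving $\overline{N}(r,d;f)\le\tfrac12 N(r,d;f)$. Neither claim is true. The order identity $p(q_d+1)=\beta(q_{d'}+1)$ excludes cross-matching only when the matched point of $g$ is \emph{simple}: that is precisely Lemma \ref{lem4} of the paper, whose hypothesis $g'(z_0)\neq 0$ is essential. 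When $g'(z_0)=0$ cross-matchings survive; e.g.\ if the column has multiplicities $\{3,1,1\}$, then $d$ with $q_d=3$ lies in $B_i(H_2)$, yet $f(z_0)=d$ simply and $g(z_0)=d'$ doubly with $q_{d'}=1$ gives $1\cdot 4=2\cdot 2$, perfectly consistent with $P(f)=P(g)$ (the doubling inequality constrains only the elements of $A_i(H_2)$, not the small multiplicities $q_{d'}$, nor the simple roots of $P(z)-P(d)$, which $g$ may also hit with multiplicity $q_d+1$). Moreover a common \emph{simple} $d$-point of $f$ and $g$ is entirely admissible, so the distinguished values are not ramified values at all; the ramification bound from which you want to collapse the SMT, and hence the thresholds $6$ and $5$, cannot be obtained this way.

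What your proposal is missing is the paper's counting device. The paper fixes $c$ with $c+d_i\neq 0$ and sets $\Phi=\frac{1}{f+c}-\frac{1}{g+c}$; if $\Phi\equiv 0$ then $f=g$ at once, and otherwise each $\overline{N}(r,d;f)$ is split according to whether $g'$ vanishes. On the part with $g'\neq 0$, Lemma \ref{lem4} forces $f=g=d$, so these points---together with the common poles of $f,g$, shared CM because $P(f)=P(g)$---are zeros of $\Phi$ and are absorbed by $\overline{N}(r,0;\Phi)\le T(r,f)+T(r,g)+O(1)$; the part with $g'=0$ is charged to $\overline{N}(r,0;g')\le 2T(r,g)+S(r,g)$. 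Applying the complex SMT to $f$ with the $t'$ values of $\bigcup_i B_i(H_2)$ and $\infty$ yields $(t'-1)T(r,f)\le 4T(r,f)+S(r,f)$, a contradiction exactly when $t'\ge 6$. Your account of the entire case is also not the actual mechanism: for entire $f$ the value $\infty$ already contributes $0$ to the right-hand side, so ``omitting $\infty$'' gains nothing. The paper's gain comes from re-estimating the $g'=0$ part by $\overline{N}\bigl(r,0;\tfrac{g'}{g-\alpha}\bigr)\le \overline{N}(r,\alpha;g)+S(r,g)\le T(r,g)+S(r,g)$, where $\alpha$ avoids all fibres $P^{-1}(P(d))$; this uses the logarithmic-derivative lemma together with the fact that, for entire $g$, the function $g'/(g-\alpha)$ has poles only at zeros of $g-\alpha$. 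Replacing $2T$ by $T$ is what lowers the threshold to $t'\ge 5$.
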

\begin{theo}\label{t3}
	For a polynomial ${P(z)}$, if ${t\geq  3}$ and ${t'\geq  3}$, then ${P(z)}$ is UPM in $\mathbb{K}$.
\end{theo}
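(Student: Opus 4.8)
The plan is to argue by contradiction: suppose there exist non-constant $f,g\in\mathcal{M}(\mathbb{K})$ with $P(f)=P(g)$ but $f\neq g$, and derive a contradiction from $t\ge 3$. (Since the construction in Section~2 already gives $t'\ge t$, the effective hypothesis here is $t\ge 3$, which is the exact non-Archimedean analogue of the threshold $k\ge 3$ in \emph{Theorem~B}.) The engine will be the $p$-adic Second Main Theorem. In the non-Archimedean setting the ramification is controlled much more tightly than over $\mathbb{C}$ — this is precisely why \emph{Theorem~B} needs only $k\ge 3$, with no extra multiplicity condition, whereas the complex \emph{Theorem~A} at $k=3$ also demands $\max(q_i)\ge 2$. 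So three ramification fibres should suffice to close the argument.

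First I would differentiate the hypothesis to get $P'(f)f'=P'(g)g'$ and, using \eqref{p'}, rewrite it as $\prod_j(f-d_j)^{q_j}f'=\prod_j(g-d_j)^{q_j}g'$. Reading off local orders at a zero of $f-d_\ell$ of order $m$ shows that the left side vanishes there to order $(q_\ell+1)m-1$, so the right side must vanish to the same order; and since $P(f)=P(g)$ forces $P(g)=P(d_\ell)$ at that point, the value $g$ assumes there is a root of $P(w)-P(d_\ell)$. This is where the CIP/NCIP distinction enters, exactly as encoded by \emph{Remark~\ref{remm3}}: for a CIP the value $P(d_\ell)$ is realized at the single critical point $d_\ell$, whereas for an NCIP it is realized simultaneously at \emph{every} critical point lying in the same column of \emph{Table~1}. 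Controlling this leakage of multiplicity among the critical points of one fibre is the crux of the whole argument.

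Next I would use $t\ge 3$ to select three distinguished critical points, one from each of three columns $i$ with $B_i(H_2)\neq\phi$; these lie in three distinct columns of \emph{Table~1}, hence carry three distinct critical values $c_1,c_2,c_3$. From each such column I would take the point $d_{\sigma(i)}$ realizing the maximal multiplicity $q_{i1}$ (which lies in $A_i(H_2)$ since the maximum always does). The defining inequality of $A_i(H_1)$ guarantees that $q_{i1}$ strictly dominates every repeated multiplicity in its column, so that $d_{\sigma(i)}$ is the root of $P(w)-c_i$ of strictly largest order $q_{i1}+1$ among all critical roots of that fibre; and the inequality $q_{i1}+1<2(q_{ij}+1)$ built into $A_i(H_2)$ is precisely what forbids the order $(q_{i1}+1)m-1$ counted above from being absorbed by $g$ hitting a \emph{different} critical root of the same fibre. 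Combining the order analysis of the previous step with this dominance, a zero of $f-d_{\sigma(i)}$ can only be matched by a zero of $g-d_{\sigma(i)}$ of the same order, so $f$ and $g$ share each of $d_{\sigma(1)},d_{\sigma(2)},d_{\sigma(3)}$ counting multiplicity, up to a controlled error.

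Finally I would feed this into the $p$-adic Second Main Theorem applied to $f$ and, symmetrically, to $g$ at these three values. The sharing forces the total defect to be too large, pinning $f$ and $g$ into a Möbius relation; the identity $P(f)=P(g)$, together with $\deg P$, then forces that relation to be the identity, whence $f=g$ — the desired contradiction. The main obstacle I anticipate is not the closing Nevanlinna estimate but the bookkeeping in the third step: proving rigorously that the dominance conditions of $A_i(H_1)$ and $A_i(H_2)$ genuinely prevent multiplicity from leaking to the other critical points sharing a fibre, since for an NCIP several critical points occupy one column and a priori a zero of $f-d_{\sigma(i)}$ could be compensated by $g$ reaching a different critical point of that fibre. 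Establishing that this cannot happen at the distinguished points counted by $t$ is the heart of the NCIP generalization of \emph{Theorem~B}.
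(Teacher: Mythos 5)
Your skeleton (pick three distinguished points from three distinct columns via $t\geq 3$, analyse local orders through $P(f)=P(g)$ and $f'P'(f)=g'P'(g)$, then apply the $p$-adic second fundamental theorem symmetrically to $f$ and $g$) matches the paper's, but the step you yourself flag as ``the heart'' is exactly where the proposal breaks, and your plan for that step would fail. You propose to prove that the dominance conditions built into $A_i(H_1)$ and $A_i(H_2)$ make it impossible for a zero of $f-d_{\sigma(i)}$ to be compensated by $g$ hitting anything other than $d_{\sigma(i)}$. That exclusion is only true when the compensating function is unramified there: this is precisely the paper's Lemma \ref{lem4}, whose hypothesis $g'(z_0)\neq 0$ is essential. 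If both $f'$ and $g'$ vanish at $z_0$, leakage to another critical root of the same fibre is arithmetically possible even under the $A_i(H_2)$ inequality (take $q_{i1}+1=3$, $q'+1=2$: a double $d_{\sigma(i)}$-point of $f$ against a triple $d'$-point of $g$ gives $2\cdot 3=3\cdot 2$, and $q_{i1}+1<2(q'+1)$ does not forbid it); worse, $g$ may land on a \emph{simple, non-critical} root $\xi$ of $P(w)-P(d_{\sigma(i)})$, with $g-\xi$ vanishing to order $(q_{i1}+1)m$ --- a case your third paragraph never addresses, although your second paragraph's order count already shows it is consistent. So no sharing statement, even ``up to a controlled error,'' can be reached by exclusion, and you give no other mechanism for controlling the error.

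The paper's resolution is not exclusion but cancellation. The bad points (where $f=d_i$, $g'=0$, $g\neq d_i$) are automatically points where $g$ avoids \emph{all three} $d_j$ (since $P(g(z_0))=P(d_i)$ and the $d_j$ carry distinct critical values), hence they are counted by $\overline{N}_0(r,0;g')$, the ramification of $g$ away from $d_1,d_2,d_3$, as in \eqref{ee1}; the good points (where $f=g=d_i$) are zeros of the auxiliary function $\Phi$ of \eqref{phi}. Writing the SMT inequality \eqref{ee2} for $f$ and \eqref{ee3} for $g$ and \emph{adding} them, the error term $\overline{N}_0(r,0;g')$ from $f$'s inequality cancels against the ramification term $-N_0(r,0;g')$ in $g$'s inequality (and symmetrically), leaving $2\left[T(r,f)+T(r,g)\right]\leq 2\overline{N}(r,0;\Phi)-2\log r+O(1)\leq 2\left[T(r,f)+T(r,g)\right]-2\log r+O(1)$, a contradiction coming from the non-Archimedean $-\log r$ term. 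Your endgame is also off: no M\"{o}bius relation appears in the paper's proof, and none could be extracted from merely approximate sharing; conversely, if you did have genuine sharing of $d_1,d_2,d_3,\infty$, the $p$-adic four-value theorem would give $f=g$ outright, making the M\"{o}bius detour both unavailable and unnecessary. Without the $\Phi$-plus-cancellation device (or an equivalent), the proposal does not close.
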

\begin{rem}
	In view of {\em Remark \ref{remm3}}, for {any CIP with} derivative index $k\geq 3$, {Theorem \ref{t3}} { coincides with {\em Theorem B}.} 
      For $k=2$ we do not proceed further as we already mentioned earlier {(see para 3 after Remark \ref{remm21}).} 
\end{rem}
\begin{theo}\label{t4}
	For a polynomial ${P(z)}$, if ${t\geq  3}$ and ${t'\geq  4}$, then ${P(z)}$ is UPM in $\mathbb{C}$.
\end{theo}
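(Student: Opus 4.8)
The plan is to argue by contradiction, following the Fujimoto--Wang strategy that underlies \emph{Theorems A, B} and \emph{Theorem \ref{t3}}, but replacing the non-Archimedean second main theorem by its Archimedean counterpart; the loss of one unit in the latter is exactly what forces $t'\ge 4$ here in place of $t'\ge 3$ in \emph{Theorem \ref{t3}}. Assume $f,g\in\mathcal{M}(\mathbb{C})$ are non-constant with $P(f)=P(g)=:F$, and suppose $f\not\equiv g$. First I would record the coincidence of poles: as $P$ has degree $n$, a pole of $f$ of order $p$ forces one of $g$ of the same order and conversely, so $f$ and $g$ share $\infty$ with multiplicity, $T(r,f)=T(r,g)+S(r,f)$, and the pole contributions on both sides match.

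The decisive local computation is at the retained critical points. If $d_i$ is a critical point of $P'$ of multiplicity $q_i$, then $P(w)-P(d_i)=c_i(w-d_i)^{q_i+1}+\cdots$, so at a point $z_0$ with $f(z_0)=d_i$ the function $F-P(d_i)$ vanishes to order a multiple of $q_i+1$. Since $F=P(g)$, the value $g(z_0)$ lies in the fibre $P^{-1}(P(d_i))$ displayed in the corresponding column of \emph{Table 1}. If $g(z_0)=d_i$ as well, then $z_0$ is a zero of $f-g$. Otherwise $g(z_0)$ is another point of the fibre, and matching the vanishing order of $F-P(d_i)$ forces $g$ to be ramified at $z_0$ --- to order $q_i+1$ over a simple pre-image, or to an order constrained by $q_j$ over a paired critical point $d_j$. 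Thus, away from the isolated zeros of $f-g$, \emph{every} $d_i$-point of $f$ manufactures ramification of $g$, and symmetrically. This is precisely the point at which $f\not\equiv g$ is used, and where the hypotheses $H_1$ (the retained multiplicity dominates its column) and $H_2$, namely $q_{i1}+1<2(q_{ij}+1)$, are invoked: they guarantee that the ``paired critical point'' alternative cannot dissipate the forced ramification, which is exactly the content distilled into the sets $B_i(H_2)$ and the counters $t,t'$.

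With the mechanism in place, the second step is a global count. Summing the forced ramification of $g$ over the $t'$ retained critical points, grouped into the $t\ge 3$ distinct critical values, and feeding it into the Archimedean second main theorem for $g$ yields an inequality of the schematic form
\be
\sum_{d_i\in\bigcup_i B_i(H_2)}\ol N\!\left(r,\tfrac{1}{f-d_i}\right)\;\le\;N_{\mathrm{ram}}(r,g)+\ol N\!\left(r,\tfrac{1}{f-g}\right)+S(r,g),
\ee
while the second main theorem caps the total ramification $N_{\mathrm{ram}}(r,g)$ at essentially $2\,T(r,g)$, diminished by the shared-value contributions. The conditions $t\ge 3$ and $t'\ge 4$ are calibrated so that the left-hand side exceeds this Archimedean allowance unless $T(r,f)$ stays bounded; the constant $2$ is the Archimedean signature, whereas in \emph{Theorem \ref{t3}} the corresponding constant is $1$, which is why $t'\ge 3$ suffices in $\mathbb{K}$ but $t'\ge 4$ is required in $\mathbb{C}$. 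Since $f$ is non-constant, the only escape is $f\equiv g$, proving that $P$ is UPM in $\mathbb{C}$.

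I expect the crux to be the rigidity asserted in the middle paragraph: quantifying, rather than merely asserting, that a $d_i$-point of $f$ cannot migrate harmlessly onto a non-matching pre-image of $g$. This is where the combinatorial definitions $A_i(H_1)$, $A_i(H_2)$, $B_i(H_2)$ must be deployed in full, and where the multiplicity bookkeeping of \emph{Tables 1--2} has to be converted into clean counting-function estimates; the inequality $q_{i1}+1<2(q_{ij}+1)$ will be needed precisely to forbid the degenerate multiplicity matches. A secondary difficulty is sharpness at the threshold: one must check that the three critical values supplied by $t\ge 3$ can indeed be taken among the retained ones, so that the count is tight exactly at $t'=4$, and one must separately dispose of the degenerate sub-cases in which $f$ or $g$ omits some retained critical point.
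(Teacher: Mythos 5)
Your high-level strategy (contradiction, forced ramification at retained critical points, second main theorem counting) is the same Fujimoto-type framework the paper uses, but as written your global count has a genuine gap: it does not close numerically. In your schematic inequality the term $\overline{N}\bigl(r,\tfrac{1}{f-g}\bigr)$ can be as large as $T(r,f)+T(r,g)\approx 2T(r,f)$, and $N_{\mathrm{ram}}(r,g)$ is only capped by $2T(r,g)$, so the right-hand side is of size $4T(r,f)$; meanwhile, with $t'=4$ retained points the left-hand side is at most $4T(r,f)$ as well, so no contradiction follows, and no "calibration" of $t,t'$ repairs a one-sided count of this shape. The paper closes the count differently: it applies the second fundamental theorem \emph{symmetrically} to $f$ (with values $d_1,d_2,d_3,d,\infty$) and to $g$, keeping the ramification terms $-N_0(r,0;f')$ and $-N_0(r,0;g')$, bounds the coincidence points $f=g=d_i$ by $\overline{N}(r,0;\Phi)\le T(r,f)+T(r,g)$ with $\Phi=\tfrac{1}{f+c}-\tfrac{1}{g+c}$, bounds the migrated preimages by $\overline{N}_0(r,0;g')$ (resp. $\overline{N}_0(r,0;f')$), and then \emph{adds} the two inequalities so that $\overline{N}_0(r,0;g')-N_0(r,0;g')\le 0$ and $\overline{N}_0(r,0;f')-N_0(r,0;f')\le 0$ cancel, yielding $3[T(r,f)+T(r,g)]\le 2[T(r,f)+T(r,g)]+S(r,f)$, the desired contradiction. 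This cancellation structure is essential and is absent from your proposal.

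The second gap is in your middle paragraph: the hypotheses behind $B_i(H_2)$ do \emph{not} forbid the degenerate configuration where $f(z_0)=d_i$, $g(z_0)=d_j\neq d_i$ in the same column, and $f'(z_0)=g'(z_0)=0$. Such points genuinely occur, and they are fatal for your scheme because there the ramification of $g$ sits over a value $d_j\in B(H_2)$ that is itself one of the values counted in the second main theorem for $g$; that ramification is already consumed in passing from $N$ to $\overline{N}$ and cannot be reused to absorb migrated $d_i$-points of $f$ (equivalently, it cannot be fed into $\overline{N}_0(r,0;g')$, which by definition excludes $g\in B(H_2)$ points). The paper disposes of exactly these points (its counting functions $N(4)$ and $N(6)$) by a separate device you never introduce: the auxiliary function $\Psi=\dfrac{f'g'(f-g)^2}{(f-d_1)(f-d_2)(f-d_3)(g-d_1)(g-d_2)(g-d_3)}$, shown in Lemma 7.5 to be \emph{entire} with $T(r,\Psi)=S(r,f)$, together with Lemma 7.4, which forces a second-order vanishing of $f'$ or $g'$ at such points so that they are zeros of $\Psi$ and hence contribute only $S(r,f)$. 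Without $\Psi$ (or a substitute), the "forced ramification" bookkeeping you describe cannot be converted into counting-function estimates that survive the Archimedean error term, so the proposal as it stands does not prove the theorem.
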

\begin{theo}\label{t7}
Let  ${P(z)}$ be a  polynomial with
 ${t\geq  3}$; i.e., at least three ${B_i(H_2)}$'s are non empty; say ${B_1(H_2), B_2(H_2), B_3(H_2)}$ are non empty. Let ${q_i= \max  \{q:q\in C_i'\}}$ and ${d_i(\in B_i(H_2))}$ corresponds to ${q_i}$, for ${i=1,2,3.}$ {I.e.,} \begin{eqnarray}
		\nonumber P'(z)&=& (z-d_1)^{q_1}(z-d_2)^{q_2}(z-d_3)^{q_3}Q(z),
	\end{eqnarray}
	where ${Q(z)}$ is a polynomial of degree greater equal to zero. Further,  ${ (\text{say }q_1=)\max (q_1,q_2,q_3)\geq  2}$. 
	Then, ${P(z)}$ is UPM in $\mathbb{C}$, 
     whenever  \begin{itemize}
		 \item[(i)] ${\min(q_1,q_2,q_3)\geq  2}$; or
		\item [(ii)] ${P(z)}$ is CIP; or  \item[(iii)] ${P(z)}$ is NCIP with ${t'\geq  4}$  \item[(iii)] ${P(z)}$ is NCIP with  ${t'=3}$ and \begin{eqnarray}
			\nonumber \dfrac{Q'(d_j)}{Q(d_i)}&\neq & \dfrac{d_i(1+q_1)+d_1(1+q_i)-(2+q_1+q_i)d_j}{(d_j-d_1)(d_j-d_i)}, 
		\end{eqnarray}
		 for ${i\neq j}$ and ${i,j\in \{2,3\}}$.
	\end{itemize}
\end{theo}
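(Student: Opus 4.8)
The plan is to run the classical Fujimoto differentiation argument inside the $t$–$t'$ bookkeeping of Section 2, and to split the four clauses into three that reduce to results already in hand and one genuinely new boundary case. Throughout one assumes $P(f)=P(g)$ for nonconstant $f,g\in\mathcal{M}(\mathbb{C})$ and aims to force $f=g$; differentiating gives the basic relation $f'/g'=\prod_i\big((g-d_i)/(f-d_i)\big)^{q_i}$, and a zero of $f-d_i$ of order $m$ makes $P(g)-P(d_i)$ vanish to order $m(q_i+1)$, so it is matched with a preimage of a critical point of equal $P$-value. The reductions come first. Because $\max(q_1,q_2,q_3)\ge 2$ is standing, clause (ii) is immediate: if $P(z)$ is CIP then by \emph{Remark \ref{remm3}} we have $t=t'=k$, so $t\ge 3$ gives derivative index $k\ge 3$, and \emph{Theorem A} applies (for $k=3$ the hypothesis $\max(q_1,q_2,q_3)\ge 2$ is exactly condition \eqref{bhn2}, and for $k\ge 4$ it is automatic). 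Clause (iii) is nothing but \emph{Theorem \ref{t4}}, since there $t\ge 3$ and $t'\ge 4$. These two reductions isolate the remaining situation $t'=3$, covered by clauses (i) and (iv).

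For clause (i), where $\min(q_1,q_2,q_3)\ge 2$, I would re-run the second main theorem estimate underlying \emph{Theorem \ref{t4}}, but feeding in the three multiplicities $q_1,q_2,q_3$ explicitly rather than only using $t'\ge 4$. With three ramified values available through $d_1,d_2,d_3$, each of multiplicity at least $2$, the Fujimoto-type defect relation (the analogue of $q_1q_2+q_1q_3+q_2q_3>q_1+q_2+q_3$, which is strict once every $q_i\ge 2$) is violated strictly. This closes clause (i) without ever needing $t'\ge 4$, and so disposes of every $t'=3$ case in which the distinguished multiplicities are all at least two.

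The main obstacle is clause (iv): $P(z)$ is NCIP with $t=t'=3$, and the content not already handled by clause (i) is precisely $\min(q_2,q_3)=1$, where the defect inequality is exactly tight and the argument of \emph{Theorem \ref{t4}} leaves no slack. Here I would assume $f\ne g$ with $P(f)=P(g)$ and pass to the residual curve $R(z,w)=0$ defined by $P(z)-P(w)=(z-w)R(z,w)$, on which the pair $(f,g)$ must lie. A nonconstant meromorphic pair can persist only along a branch of $R=0$ through the paired critical configuration responsible for the non-critical-injectivity, i.e.\ where $f$ and $g$ tend to distinct critical points of equal $P$-value. Expanding $R$ to first order at such a point, and reading off the local ramification data from $P'(z)=(z-d_1)^{q_1}(z-d_2)^{q_2}(z-d_3)^{q_3}Q(z)$ (so that $P''$ and $P'''$ at $d_j$ enter through $q_1,q_i,Q(d_i),Q'(d_j)$), the requirement that the tangent direction of $R=0$ be compatible with a genuine nonconstant branch should reduce to a single scalar identity, which I expect to be exactly
\[
\frac{Q'(d_j)}{Q(d_i)}=\frac{d_i(1+q_1)+d_1(1+q_i)-(2+q_1+q_i)d_j}{(d_j-d_1)(d_j-d_i)},\qquad i\ne j,\ i,j\in\{2,3\}.
\]
Assuming this equality fails therefore eliminates every off-diagonal branch, collapses the borderline defect relation onto the diagonal, and yields $f=g$.

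The delicate point — and where I expect the real work to concentrate — is establishing that this first-order obstruction is both necessary and sufficient: one must verify that, once the displayed derivative condition holds, no higher-order degeneracy of $R=0$ at the paired critical point can support a nonconstant pair, and that in the tight regime $t'=3$ the second main theorem equality case admits no alternative realization. Making the expansion precise enough to produce exactly the numerator $d_i(1+q_1)+d_1(1+q_i)-(2+q_1+q_i)d_j$, and confirming that the two symmetric conditions for $(i,j)=(2,3)$ and $(3,2)$ jointly suffice, is the computational heart of the theorem and the step I would expect to require the most care.
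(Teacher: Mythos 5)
Your reductions of clauses (ii) and (iii) are sound, and in fact more economical than the paper's own treatment: within the Section 2 setup $P$ has simple zeros and a CIP has $t=t'=k\geq 3$, so \emph{Theorem A} applies directly (with $\max(q_1,q_2,q_3)\geq 2$ supplying \eqref{bhn2} when $k=3$), and clause (iii) is verbatim \emph{Theorem \ref{t4}}; the paper instead reproves the CIP case inside its own machinery (via \emph{Theorem \ref{t4}} when $Q$ is non-constant and the five-value theorem when $Q$ is constant). But the two clauses carrying the new content, (i) and (iv), are not proved by your proposal. For (i), the mechanism you invoke does not exist: with $t'=3$ there is no ``strictly violated'' defect relation. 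Even after \emph{Lemma \ref{lem1}}, \emph{Lemma \ref{lem4}} and \emph{Lemma \ref{lem6}} give that $f,g$ share $d_1,d_2,d_3$ \textbf{``CM''} and $\infty$ CM, the second fundamental theorem only yields $2T(r,f)\leq\sum_{i=1}^{3}\overline{N}(r,d_i;f)+\overline{N}(r,\infty;f)+S(r,f)\leq 4T(r,f)+S(r,f)$, which is vacuous; the hypothesis $q_i\geq 2$ produces no ramification savings because a $d_i$-point of $f$ need not be a multiple point of $f$ --- this is exactly why the complex case demands $t'\geq 4$ in \emph{Theorem \ref{t4}}. What four shared values actually buy is $g=T(f)$ for some M\"obius transformation $T$ (four-value theorem), and the genuine difficulty, ruling out $T\neq\mathrm{id}$, is absent from your sketch. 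The paper spends the entire final third of its proof on precisely this: forcing $c=0$ in $T=(az+b)/(cz+d)$ from the polynomial identity \eqref{z29}, proving \emph{Proposition \ref{propo3}} (that $\infty$ is an e.v.p.\ of both functions, via the auxiliary function $\eta$ and the second fundamental theorem over the $n\geq 5$ zeros of $P$), and only then pinning $a=d$, i.e.\ $f=g$. Your claim that clause (i) also ``disposes of'' the tight $t'=3$ case is, in effect, the assertion that Fujimoto's counting extends to NCIP's by a direct SMT computation --- which is the open problem the paper is addressing, not a known fact.

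For clause (iv), your residual-curve plan is in spirit the algebraic approach of An--Wang--Wong, hence a genuinely different route, but nothing in it is carried out. The two key claims --- that a nonconstant pair $(f,g)$ can only persist along a branch of $R(z,w)=0$ through a paired critical configuration, and that a first-order expansion there yields exactly the numerator $d_i(1+q_1)+d_1(1+q_i)-(2+q_1+q_i)d_j$ --- are stated with ``I expect'' and ``should reduce to,'' not proved; and the first is doubtful as formulated, since whether a component of $P(z)-P(w)=0$ admits a nonconstant meromorphic parametrization is a global genus-and-punctures question, not a local tangency condition at the singular points. The paper's actual argument is analytic: it sets $\gamma=f'/(f-d_1)-g'/(g-d_1)$ and $\Psi$ as in \eqref{Psi}, proves the identity $\gamma^2-(d_2-d_3)^2\Psi=\zeta_1\zeta_2$ (\emph{Proposition \ref{propo1}}), eliminates the case $\gamma^2\neq(d_2-d_3)^2\Psi$ by a small-function argument ending in a contradiction with $n\geq 5$, deduces from $\zeta_1=0$ that $f,g$ share $d_1,d_2,\infty$ CM, and only at that point uses your displayed non-equality: it guarantees that the numerator in \eqref{jj4} cannot vanish at $d_3$-points of $f$, whence $f,g$ share $d_3$ CM and the four-value/M\"obius analysis closes the proof. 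As submitted, your proposal establishes (ii) and (iii) but neither (i) nor (iv).
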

\begin{rem}
	In view of {\em Remark \ref{remm3}}, for {CIP's} with derivative index ${k}$, {{\em Theorem \ref{t4} and {\em Theorem \ref{t7}}}} coincide with {\em Theorem A,} {whenever} ${k\geq  4}$ and {${k=3}$, respectively.}
\end{rem}
Observe that in {\em Theorem \ref{t7}}, we have assumed a condition, ${ \max  (q_1,q_2,q_3)\geq  2}$. {\em Theorem A} says that this condition is necessary for a CIP to be UPM whenever ${t=3}$. However, for NCIP this is not true. For NCIP with ${t\geq  3}$ and ${ \max  (q_1,q_2,q_3)=1}$, we can have UPM as follows.
\begin{theo}\label{t6}
 Let ${P(z)}$ be an NCIP of degree ${n(\geq  6)}$ with ${t=t'=3}$; i.e., exactly three ${B_i(H_2)}$'s are non empty, and ${|B_i(H_2)|=1}$; say ${B_1(H_2)=\{d_1\}}$, ${B_2(H_2)=\{d_2\}}$ and ${B_3(H_2)=\{d_3\}}$. Let \begin{eqnarray}
		\nonumber P'(z)&=&(z-d_1)(z-d_2)(z-d_3)Q(z),
	\end{eqnarray}
	where ${Q(z)}$ is a polynomial of degree ${\geq  2}$. Furthermore, assume 
	 \begin{itemize}
	 	 \item[(a)] ${Q'(d_j)\neq 0}$ for ${j\in \{1,2,3\} }$;
		\item [(b)] for distinct ${i,j,k\in \{1,2,3\}}$, \begin{eqnarray}
			\nonumber \dfrac{Q'(d_j)}{Q(d_i)}&\neq & \dfrac{2d_i+2d_k-4d_j}{(d_j-d_k)(d_j-d_i)}, 
		\end{eqnarray}
		 \item[(c)] for any ${\xi(\neq d_k)\in \{z:P(z)-P(d_k)=0\}}$ \begin{eqnarray}\label{xc1}
		 	\dfrac{Q(\xi )}{Q(d_k)}\neq \dfrac{(d_k-d_i)^2(d_k-d_j)^2}{(\xi-d_i)^2(\xi-d_j)^2}
		 \end{eqnarray}
		 for any ${i,j,k\in \{1,2,3\}}$.
	\end{itemize}
	Then, ${P(z)}$ is UPM in $\mathbb{C}$,
     whenever \begin{itemize}
	 	\item [(i)]  ${d_k\neq 2d_i-d_j}$; or  \item[(ii)] ${d_k=2d_i-d_j}$ with ${\dfrac{Q'(d_j)}{Q(d_j)}\neq \dfrac{3}{(d_k-d_i)}}$ for any ${i,j,k\in \{1,2,3\}}$.
	 \end{itemize}
\end{theo}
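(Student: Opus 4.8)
The plan is to argue by contradiction via the Fujimoto–Cherry–Yang reduction. Suppose there exist non-constant $f,g\in\mathcal M(\mathbb C)$ with $P(f)=P(g)$ and $f\not\equiv g$. Since $z-w$ divides $P(z)-P(w)$, write $P(z)-P(w)=(z-w)\,H(z,w)$, where $H$ is symmetric of bidegree $(n-1,n-1)$. As $f\not\equiv g$, the hypothesis $P(f)=P(g)$ forces $H(f,g)\equiv 0$, so $z\mapsto(f(z),g(z))$ is a non-constant holomorphic map from $\mathbb C$ into the normalization $\widetilde\Gamma$ of the closure $\Gamma\subset\mathbb P^1\times\mathbb P^1$ of $\{H=0\}$. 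Because $\mathbb C$ is parabolic, no such map exists once $\widetilde\Gamma$ has genus $\ge 2$; hence the whole argument reduces to showing that, under (a)--(c) together with (i) or (ii), the curve $\widetilde\Gamma$ either has genus $\ge 2$ or, in the residual genus $\le 1$ situations, admits no parametrization with $f\not\equiv g$. In either case we obtain $f\equiv g$, proving $P$ is UPM in $\mathbb C$.

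Next I would locate the singularities of $\Gamma$. Differentiating $P(z)-P(w)=(z-w)H$ shows that an off-diagonal point $(\alpha,\beta)$ of $\Gamma$ is singular precisely when $P(\alpha)=P(\beta)$ and $P'(\alpha)=P'(\beta)=0$, while the diagonal meets $\Gamma$ at the points $(d_i,d_i)$. With $P'(z)=(z-d_1)(z-d_2)(z-d_3)Q(z)$ and $d_1,d_2,d_3$ lying in distinct columns of Table~1 (so $P(d_1),P(d_2),P(d_3)$ are pairwise distinct), the singularities are confined to the three fibres over $P(d_1),P(d_2),P(d_3)$, and the $t=t'=3$ structure with $|B_i(H_2)|=1$ bounds their number. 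Here conditions (a)--(c) enter as the non-degeneracy input: using $P''(d_k)=(d_k-d_i)(d_k-d_j)Q(d_k)$, condition (a) keeps each $d_k$ a genuine double point of the fibre, condition (b) prevents an accidental higher tangency of the two local branches of $\Gamma$, and condition \eqref{xc1} excludes a coincidence in the fibre over $P(d_k)$ that would split off a rational branch. Granting this, each singular place is an ordinary node, and I would feed the node count into the adjunction/genus formula $p_a(\Gamma)=(n-2)^2$ for a bidegree $(n-1,n-1)$ curve, using $n\ge 6$ to conclude that the remaining geometric genus is at least $2$ in the generic configuration.

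The genuine obstacle is the degenerate configuration singled out by (i)--(ii), in which the three distinguished critical points are in arithmetic progression, $d_k=2d_i-d_j$. In this case an extra coincidence appears---an additional singular place, or a branch of $\Gamma$ whose local geometry allows a Möbius (genus $0$) or elliptic (genus $1$) parametrization---and the crude genus count no longer forces a contradiction. This is where I expect all the work to concentrate. The role of clause (ii), the inequality $\frac{Q'(d_j)}{Q(d_j)}\neq\frac{3}{d_k-d_i}$, is exactly to annihilate this last degeneration: it says the third-order data of $P$ at $d_j$ does not align to produce the offending low-genus branch, so that even when $d_k=2d_i-d_j$ the smooth model keeps genus $\ge 2$ (or the putative rational/elliptic parametrization would force $f\equiv g$, contradicting $f\not\equiv g$). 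Outside this locus, clause (i) $d_k\neq 2d_i-d_j$ means the extra coincidence never arises and the generic analysis of the second step already closes the argument. Assembling the two subcases yields a contradiction in every configuration, so $f\equiv g$ and $P$ is UPM in $\mathbb C$, as asserted in \emph{Theorem \ref{t6}}.
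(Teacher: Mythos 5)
Your overall strategy---reduce $P(f)=P(g)$, $f\not\equiv g$, to a non-constant holomorphic map from $\mathbb{C}$ into the curve $\{H(z,w)=0\}$, $P(z)-P(w)=(z-w)H(z,w)$, and kill it by showing every component of that curve has geometric genus at least $2$---is a legitimate alternative in principle, and it is genuinely different from the paper's route (the paper stays entirely inside Nevanlinna theory: the auxiliary functions $\Psi$ and $\Gamma$, the alternative $\Gamma^2\neq c_j\Psi$ versus $\Gamma^2=c_1\Psi$, the algebraic identity \eqref{xv71} combined with hypothesis (c) at \eqref{xv77}--\eqref{xv79} to force $f,g$ to share $d_1$ CM, and then the four-value-theorem/M\"obius analysis borrowed from Theorem \ref{t7}). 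But what you have written is a plan, not a proof, and it breaks exactly where the content of Theorem \ref{t6} lies. The decisive gap: the count ``$p_a=(n-2)^2$ minus the nodes'' bounds the geometric genus from below only when $\{H=0\}$ is \emph{irreducible}. When it is reducible, the arithmetic genus is distributed among the components and their mutual intersections, and a component of low bidegree---for instance a $(1,1)$-component, which is precisely the graph of a M\"obius relation $g=T(f)$---has genus $0$ no matter how large $(n-2)^2$ is (compare $P(z)=z^n$, where $H$ splits into $n-1$ genus-zero $(1,1)$-curves). Ruling out exactly these low-genus components is the real work; it is what the paper spends Propositions \ref{propo1}--\ref{propo3}, the four-value theorem, and hypotheses (b), (i), (ii) on. Your only remark in this direction (``the putative rational/elliptic parametrization would force $f\equiv g$'') is circular, and the degenerate case (i)/(ii) is explicitly deferred (``this is where I expect all the work to concentrate'') with no computation behind it.

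In addition, the geometric input you feed into the genus count is factually wrong, so even the ``generic'' branch of your argument does not stand. Under $t=t'=3$, $|B_i(H_2)|=1$ with the $d_i$ simple zeros of $P'$, the set-up of Section 2 forces each column $C_i$ to be the singleton $\{d_i\}$; hence no other critical point shares the values $P(d_1),P(d_2),P(d_3)$, and since $P$ is NCIP the coincidence of critical values must occur among the zeros of $Q$. Consequently the off-diagonal singular points $(\alpha,\beta)$ of $\{H=0\}$ (characterized, as you correctly note, by $P'(\alpha)=P'(\beta)=0$ and $P(\alpha)=P(\beta)$) lie over critical values of zeros of $Q$, \emph{not} over $P(d_1),P(d_2),P(d_3)$; moreover each diagonal point $(d_i,d_i)$ is a smooth point of the curve, because $H(z,w)=\frac{P''(d_i)}{2}(z+w-2d_i)+O(2)$ near it and $P''(d_i)=(d_i-d_j)(d_i-d_k)Q(d_i)\neq 0$. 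Your reading of the hypotheses is correspondingly off: (a) is not a ``double point'' condition (in the paper it guarantees that the constants $c_i=\left(Q'(d_i)(d_i-d_j)(d_i-d_k)/Q(d_i)\right)^2$ are non-zero), (c) does not ``exclude a rational branch'' (it is what forces $g=d_1$ wherever $f=d_1$ in \eqref{xv77}--\eqref{xv80}), and (i)/(ii) enter the terminal sharing/M\"obius step, not a nodal count. Since the claims feeding your genus computation are incorrect, the reducibility issue is never addressed, and the crucial cases are left undone, the proposal does not prove Theorem \ref{t6}.
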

\vspace{0.2in}\par 
 Further note that, {\em Theorem \ref{t1}-\ref{t6}} provide UPM for NCIP's with the derivative index $k\geq 4$. Since  for any NCIP we have $k\geq 3$, 
 hence the following question becomes inevitable.
\begin{ques}\label{q1}
Does there exist any UPM for NCIP with $k=3$?
\end{ques}
In the next theorem, we shall provide the answer of this question in affirmative.   
\begin{theo}\label{themm9}
		Let ${P(z)}$ be an NCIP 
         with ${t=2}$ and  ${t'=3}$. Let 
		 \begin{eqnarray}
		\nonumber P'(z)&=&(z-d_1)^{q_1}(z-d_2)^{2}(z-d_3)^{q_3},
	\end{eqnarray}
	with ${B_1(H_2)=\{d_1,d_3\}}$, ${B_2(H_2)=\{d_2\}}$. Furthermore, if ${q_1\geq  6}$, then
	\begin{enumerate}
	\item[(i)]  ${P(z)}$ is UPM in $\mathbb{C}$,
       when $
		 \dfrac{q_1-1}{2}<q_3<\dfrac{q_1-2}{2}+\dfrac{\sqrt{q_1^2-4q_1-4}}{2};
		$
		\item[(ii)] ${P(z)}$ is UPM in $\mathbb{K}$,
          when $
				 \dfrac{q_1-1}{2}<q_3\leq\dfrac{q_1-2}{2}+\dfrac{\sqrt{q_1^2-4q_1-4}}{2}.
				$
	\end{enumerate} 
\end{theo}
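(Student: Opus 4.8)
The plan is to argue by contradiction through a ramification count driven by the Second Main Theorem, organised around the single non-critically-injective identity available in this configuration, namely $P(d_1)=P(d_3)$. It is exactly this coincidence that places $d_1,d_3$ in one column and $d_2$ in its own, forcing $t=2$, $t'=3$; and since $q_1>q_3$ here, the hypothesis $q_3>\frac{q_1-1}{2}$ is precisely the requirement $q_1+1<2(q_3+1)$ that keeps \emph{both} $d_1,d_3$ inside $B_1(H_2)$, in the sense of the definition of $A_1(H_2)$. So suppose $f,g$ are non-constant with $P(f)=P(g)$ and $f\not\equiv g$, aiming for a contradiction. Writing $n=\deg P=q_1+q_3+3$, the data force the two factorizations
\[ P(z)-P(d_1)=c\,(z-d_1)^{q_1+1}(z-d_3)^{q_3+1}(z-e), \]
\[ P(z)-P(d_2)=c\,(z-d_2)^{3}\,R(z), \]
where $e\notin\{d_1,d_2,d_3\}$ is the single simple root left over the common value $P(d_1)$, and $R$ has degree $n-3$ with only simple roots, none among $d_1,d_2,d_3$. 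Both are immediate from the facts that $d_1,d_2,d_3$ are the only critical points and that only $d_1,d_3$ lie over the shared value.

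Next I would convert $P(f)=P(g)$ into local multiplicity relations at each fibre. At a point where $f=d_1$ to order $a$, the first factorization gives $\mathrm{ord}(P(f)-P(d_1))=(q_1+1)a$; since $P(g)-P(d_1)$ has the same order there, $g$ must take one of $d_1,d_3,e$, with orders constrained respectively by $(q_1+1)a=(q_1+1)b$, $(q_1+1)a=(q_3+1)b$, or $(q_1+1)a=b$. The same analysis at $f=d_3$, at $f=d_2$ (order $3a$, with $g\in\{d_2\}\cup R^{-1}(0)$), and at the common poles (which $f,g$ share with equal multiplicity) yields, after discarding the diagonal $f=g$ contributions, a system of relations among the truncated counting functions $\overline N(r,d_i,f)$, $\overline N(r,d_i,g)$ and $T(r,f)\asymp T(r,g)$. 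The point of the pairing is that every non-diagonal preimage of $d_1,d_3$ carries forced ramification governed by $q_1+1$ and $q_3+1$, so the distinct counting functions are small relative to $T$.

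I would then apply the Second Main Theorem to $f$, and symmetrically to $g$, for the four targets $d_1,d_2,d_3,\infty$, feeding in the bounds of the previous step. In the complex case this produces an inequality of the shape $\Lambda(q_1,q_3)\,T(r,f)\le\Lambda'(q_1,q_3)\,T(r,f)+S(r,f)$, and collecting terms the contradiction $\Lambda>\Lambda'$ works out to be equivalent to $q_3^2-(q_1-2)q_3+2<0$, i.e. to $q_3(q_1-2-q_3)>2=q_2$, which is exactly $q_3<\frac{q_1-2}{2}+\frac{\sqrt{q_1^2-4q_1-4}}{2}$ once $q_3>\frac{q_1-1}{2}$ is imposed (the bound $q_1\ge6$ keeping $\frac{q_1-1}{2}$ above the smaller root of the quadratic, so the displayed interval is the genuine admissible range). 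For $\mathbb{K}$ the non-Archimedean Second Main Theorem is one unit sharper, with no ramification defect and only an $O(1)$ error, which relaxes the strict inequality to $q_3(q_1-2-q_3)\ge2$ and thereby admits the boundary value; this is the sole origin of the $<$ versus $\le$ distinction between parts (i) and (ii).

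The main obstacle is the bookkeeping that lands the count exactly on the quadratic threshold rather than on a weaker linear one. Two points need care: controlling the contribution of the extra simple root $e$, i.e. the preimages where $g$ hits $e$ while $f$ hits $d_1$ (or vice versa), since a crude estimate there degrades the coefficient; and separating out the diagonal locus $f=g$, whose zeros must be removed before the Second Main Theorem is invoked so that they do not inflate the reduced counting functions. A further subtlety is that $d_1$ and $d_3$ play asymmetric roles because $q_1\neq q_3$, so the estimates for $f$ and $g$ must be combined and then optimised over the admissible ramification patterns; it is this optimisation that selects the product $q_3(q_1-2-q_3)$ and the comparison constant $q_2=2$. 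Once these are pinned down, the hypothesis $q_1\ge6$ is used only to guarantee non-emptiness of the stated interval and the correct ordering of the roots, so that the displayed range coincides exactly with the UPM range.
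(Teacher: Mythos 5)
Your skeleton matches the paper's: both arguments run through the factorization $P(z)-P(d_1)=c\,(z-d_1)^{q_1+1}(z-d_3)^{q_3+1}(z-\alpha)$ (your $e$ is the paper's $\alpha$), the forced multiplicity relations this imposes on non-diagonal preimages, and a Second Main Theorem count whose contradiction threshold is the quadratic $q_3(q_1-2-q_3)>2$. But there is a genuine gap at the decisive step: you apply the SMT to the \emph{four} targets $d_1,d_2,d_3,\infty$, and that count cannot close. With four targets the left-hand side is $2T(r,f)$, while the diagonal contributions $\sum_j\overline{N}(r,d_j;f\mid g=d_j)+\overline{N}(r,\infty;f)$ cannot simply be ``discarded'' or ``removed before invoking the SMT'': they can genuinely be of size $2T(r,f)$ (for instance when $g$ is a M\"obius transform of $f$ sharing $d_1,d_2,d_3,\infty$), and the only available bound is through an auxiliary function such as $\xi=\frac{1}{f-C}-\frac{1}{g-C}$, which costs $T(r,f)+T(r,g)\approx 2T(r,f)$. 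Since the non-diagonal terms are nonnegative, your scheme can only yield $2T(r,f)\le 2T(r,f)+(\text{nonnegative})+S(r,f)$, which is vacuous for every choice of $q_1,q_3$; the quadratic you announce cannot be extracted from it. The obstruction is structural, not bookkeeping: a configuration in which $f,g$ share the four targets is invisible to a counting argument over those four targets alone.

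The paper closes exactly this hole by taking $\alpha$ itself as a \emph{fifth} target. That raises the left-hand side to $3T(r,f)$, while the extra cost on the right is negligible: the diagonal $\alpha$-points are already absorbed into $N(r,0;\xi)$, and any non-diagonal $\alpha$-point of $f$ forces $g\in\{d_1,d_3\}$ there, so by the factorization $f-\alpha$ has multiplicity at least $q_3+1$, giving $\overline{N}(r,\alpha;f\mid g\neq\alpha)\le\frac{1}{q_3+1}N(r,\alpha;f)$. Feeding in the multiplicity relations (where the hypothesis $q_1<2q_3+1$ is also needed to force multiplicity at least $2$ at $d_1$-points of $f$ lying over $d_3$-points of $g$) gives $3T(r,f)\le\bigl(2+\frac{q_3+1}{q_1+1}+\frac{1}{q_1+1}+\frac{1}{q_3+1}\bigr)T(r,f)+\overline{N}_0(r,0;g')-N_0(r,0;f')+S(r,f)$; symmetrizing in $f,g$ kills the $N_0$-terms, and the contradiction condition $1>\frac{q_3+2}{q_1+1}+\frac{1}{q_3+1}$ is precisely your quadratic. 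One further correction: in the $p$-adic case the boundary value $q_3=\frac{q_1-2}{2}+\frac{\sqrt{q_1^2-4q_1-4}}{2}$ is admissible not because the error is ``only $O(1)$'' (coefficient equality against an $O(1)$ error produces no contradiction at all) but because the non-Archimedean SMT carries a $-\log r$ term, so even equality of the coefficients forces $0\le-2\log r+O(1)$, which fails for large $r$.
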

\begin{rem}
Therefore for every value of the derivative index $k$, we can construct UPM's from NCIP's using {\em Theorem \ref{t1}}-{\em Theorem \ref{themm9}} which was one of the  prime interests of the paper.
\end{rem}
 
\subsection{\bf Results on uniqueness polynomials with least degree}
\subsubsection{\underline{\bf Complex case :}}
In the following result, we prove that the conditions in {\em Theorem C} can only be satisfied by CIP's.
\begin{theo}\label{tt7}
	A polynomial of degree $4$ with derivative index greater equal to $2$ is UPE in $\mathbb{C}$
     if and only if it is CIP.
\end{theo}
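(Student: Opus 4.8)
The plan is to reduce the whole statement to Theorem C and then translate its analytic condition into the combinatorial CIP/NCIP dichotomy. Since being UPE, being CIP, and the derivative index are all unchanged when $P(z)$ is replaced by $cP(z)+d$ (with $c\neq 0$) or pre-composed with a linear substitution, I would first normalize $P$ to the depressed monic form $P(z)=z^4+pz^2+qz+r$ via $z\mapsto w-a_3/4$. Under this normalization the quantity in Theorem C becomes exactly the linear coefficient, $\tfrac{a_3^3}{8}-\tfrac{a_2a_3}{2}+a_1=q$, so Theorem C(ii) reads: $P$ is UPE in $\mathbb{C}$ if and only if $q\neq 0$. Thus it suffices to prove that, for a depressed quartic with derivative index $k\geq 2$, one has $P$ CIP $\iff q\neq 0$; equivalently $P$ NCIP $\iff q=0$.

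For the easy direction ($q=0\Rightarrow$ NCIP) I would note that $P(z)=z^4+pz^2+r$ is even, with $P'(z)=2z(2z^2+p)$; the hypothesis $k\geq 2$ excludes $p=0$ (which gives $k=1$), so the three critical points $0,\pm\sqrt{-p/2}$ are distinct and the two nonzero ones carry equal critical values, making $P$ an NCIP.

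The substantive direction is NCIP $\Rightarrow q=0$. By Remark~\ref{remm2} a polynomial with $k=2$ is automatically CIP, so an NCIP with $k\geq 2$ must have $k=3$, with three distinct critical points $w_1,w_2,w_3$ (the roots of $P'(z)=4z^3+2pz+q$). The key device is that at a critical point the quartic value collapses: from $4w_i^3=-2pw_i-q$ one obtains $P(w_i)=\tfrac{p}{2}w_i^2+\tfrac{3q}{4}w_i+r$. Imposing a coincidence of two critical values $P(w_1)=P(w_2)$ with $w_1\neq w_2$ gives the linear relation $\tfrac{p}{2}(w_1+w_2)+\tfrac{3q}{4}=0$, which with $w_1+w_2+w_3=0$ yields $2pw_3=3q$. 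Substituting this into $P'(w_3)=0$ forces $w_3^3=-q$, and combining with the Vieta product $w_1w_2w_3=-q/4$ gives $w_1w_2=w_3^2/4$ while $w_1+w_2=-w_3$; hence $(w_1-w_2)^2=(w_1+w_2)^2-4w_1w_2=0$. If $q\neq 0$ then $w_3\neq 0$ and this forces $w_1=w_2$, contradicting distinctness, so necessarily $q=0$. Together with Theorem C this closes the equivalence UPE $\iff q\neq 0\iff$ CIP.

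I expect the main obstacle to be making the NCIP $\Rightarrow q=0$ argument genuinely case-free: a priori the coincidence of critical values could involve any of the three unordered pairs, and one must ensure the same conclusion $q=0$ emerges each time. The symmetry under relabeling $w_1,w_2,w_3$ combined with the identity $w_1+w_2+w_3=0$ handles all pairs uniformly, but this should be stated explicitly rather than left to ``without loss of generality.'' A secondary point to verify carefully is that the reductions to monic depressed form genuinely preserve the UPE property, the derivative index, and the CIP/NCIP classification, so that invoking Theorem C on the depressed polynomial is legitimate.
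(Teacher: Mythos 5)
Your proof is correct, and it shares the paper's skeleton: both arguments run everything through Theorem C and reduce the statement to showing that, for a quartic with derivative index $k\geq 2$, the vanishing of $\tfrac{a_3^3}{8}-\tfrac{a_2a_3}{2}+a_1$ is equivalent to being NCIP. Where you genuinely differ is in how that equivalence is established, and your version is the more complete one. For the direction NCIP $\Rightarrow$ vanishing, the paper only writes that ``eliminating $x$ and $y$'' from $P'(x)=0$, $P'(y)=0$, $P(x)=P(y)$ yields $\tfrac{a_3^3}{8}-\tfrac{a_2a_3}{2}+a_1=0$, with no computation shown; your depressed-form normalization (under which that quantity becomes the linear coefficient $q$), the reduction $P(w_i)=\tfrac{p}{2}w_i^2+\tfrac{3q}{4}w_i+r$ modulo $P'(w_i)=0$, and Vieta's relations $w_1+w_2+w_3=0$, $w_1w_2w_3=-q/4$ constitute exactly that elimination carried out in full, and your appeal to Remark \ref{remm2} to pin $k=3$ makes the Vieta bookkeeping honest. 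For the converse direction, the paper sets the quantity to zero, solves $P'(z)=0$ by radicals, and verifies $P(z_2)=P(z_3)$ explicitly, whereas you observe that the depressed quartic with $q=0$ is even, so its two nonzero critical points $\pm\sqrt{-p/2}$ share a critical value automatically---the same content with a cleaner computation, and both proofs invoke $k\geq 2$ at the identical spot, namely to exclude the degenerate case $p=0$ (the paper's $z_2=z_3$). Two small points to repair in your write-up: state the contradiction hypothesis $q\neq 0$ (hence $w_3^3=-q\neq 0$) \emph{before} dividing $w_1w_2w_3=w_3^3/4$ by $w_3$, rather than after; and include the one-line check, which you promise but do not perform, that replacing $P$ by $cP+d$ and pre-composing with $z\mapsto z-a_3/4$ preserve UPE, CIP, and the derivative index, so that Theorem C may legitimately be applied to the normalized polynomial.
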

 \begin{rem}\label{remmm3}
 		An immediate conclusion of {\em Theorem \ref{tt7}} gives ${ \epsilon_N>4}$. Again, S. Mallick in \cite{mallick-filo} proved that  $P(z)=z^n+2 z^{n-1}+z^{n-2}+c$, where $n(\geq 5)$ is odd and $c \in \mathbb{C}$ be such that $P(z)$ has no multiple zero,  is a UPE for NCIP in ${ \mathbb{C}}$. Thus, we conclude ${  \epsilon_N=5}$.
 \end{rem} On the other hand, {\em Example \ref{exm8}} which is an application of {\em Theorem \ref{t4}}, reveals that ${ \mu  _N\leq  6}$ and hence the answer of {\em Question \ref{question1}} is positive. Furthermore, remembering the conclusions of {\em Remark \ref{remmm3}}, we infer that ${6}$ is the least possible degree of  UPM for NCIP; i.e., ${ \mu  _N}$ is exactly equal to ${6}$. However, we propose this fact as a conjecture.
\par\vspace{0.1in}\noindent{\textbf{CONJECTURE:}} The least degree of UPM for NCIP in $ \mathbb{C}$ is ${6}$.
\par\vspace{0.1in}\noindent In order to prove this Conjecture, one has to show that there is no UPM for NCIP of degree ${ 5}$. In this respect, we prove the  following result that sheds some light on this conjecture.
\begin{theo}\label{tt8}
Let ${P(z)=z^5+az^4+bz^3+c}$ be NCIP with ${8a^2\neq 5b}$. Then ${P(z)}$ is not UPM in $\mathbb{C}$. 
\end{theo}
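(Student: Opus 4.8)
The plan is to show that the two hypotheses force $P$ into a completely rigid shape, and then to exhibit two distinct nonconstant rational functions $f,g$ with $P(f)=P(g)$. First I would examine the critical structure. Since $P'(z)=z^2(5z^2+4az+3b)$, the only possible critical points are $0$ (a double zero of $P'$ once $b\ne0$) together with the two zeros $d_2,d_3$ of $5z^2+4az+3b$. Because $P$ is NCIP we have $k\ge 3$ by Remark \ref{remm21}, and as $P'$ has at most three distinct zeros this forces $k=3$; hence $b\ne0$, the discriminant $4a^2-15b\ne0$, and $d_2\ne d_3$ are both nonzero. Using $d_2+d_3=-4a/5$ and $d_2d_3=3b/5$ and reducing the symmetric quantity $(P(d_2)-P(d_3))/(d_2-d_3)$ to elementary symmetric functions, a direct computation gives
\[
\frac{P(d_2)-P(d_3)}{d_2-d_3}=-\frac{2}{625}\,(4a^2-15b)(8a^2-5b).
\]
Since $4a^2-15b\ne0$, the hypothesis $8a^2\ne5b$ yields $P(d_2)\ne P(d_3)$. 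Therefore the coincidence of critical values that witnesses NCIP must involve $0$: after relabelling, $P(\alpha)=P(0)=c$ for some critical point $\alpha\in\{d_2,d_3\}$ with $\alpha\ne0$.

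Next I would pin down the exact form of $P$. Writing $P(z)-c=z^3(z^2+az+b)$, the point $0$ is a zero of $P-c$ of multiplicity exactly $3$ (because $b\ne0$), while $\alpha$ is a zero of multiplicity at least $2$ since $P'(\alpha)=0$. These already account for $3+2=5=\deg(P-c)$ zeros, so equality holds and
\[
P(z)=z^3(z-\alpha)^2+c,\qquad \alpha\ne0.
\]
Because adding a constant to $P$ and composing with $z\mapsto\alpha z$ do not affect whether $P$ is UPM, it suffices to treat $\alpha=1$; that is, to produce distinct nonconstant meromorphic $f,g$ with $\phi(f)=\phi(g)$, where $\phi(z)=z^3(z-1)^2$.

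For the construction I would factor $\phi(x)-\phi(y)=(x-y)\Psi(x,y)$ and parametrize the quartic $\Psi=0$. This curve has a node at the origin, so the pencil of lines $y=tx$ gives $\Psi(x,tx)=x^2\bigl(Ax^2-2Bx+C\bigr)$ with $A=1+t+t^2+t^3+t^4$, $B=1+t+t^2+t^3$, $C=1+t+t^2$. The decisive point is that the discriminant collapses to a perfect cube,
\[
B^2-AC=t^3,
\]
so the substitution $t=\tau^2$ rationalizes both branches and produces the explicit nonconstant rational functions
\[
f(\tau)=\frac{1+\tau^2+\tau^3+\tau^4+\tau^6}{1+\tau^2+\tau^4+\tau^6+\tau^8},\qquad g(\tau)=\tau^2 f(\tau).
\]
By construction $\Psi(f,g)\equiv0$, hence $\phi(f)\equiv\phi(g)$ and therefore $P(f)\equiv P(g)$, while $g/f=\tau^2$ is nonconstant so that $f\not\equiv g$. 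Thus $P$ is not UPM in $\mathbb{C}$.

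The routine portion is the symmetric-function identity and the multiplicity count, which together rigidify $P$ into $z^3(z-\alpha)^2+c$. The heart of the argument, and the step I expect to be the main obstacle, is establishing that the curve $\Psi=0$ is rational: concretely this is the observation that $B^2-AC=t^3$ is a perfect cube (equivalently, that the three double points of $\Psi$ force its geometric genus down to $0$), which is exactly what makes the explicit parametrization, and hence the failure of uniqueness, possible.
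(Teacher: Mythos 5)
Your proof is correct and follows essentially the same route as the paper: both arguments first use the NCIP hypothesis together with $8a^2\neq 5b$ to force $a^2=4b$, i.e.\ $P(z)=z^3(z-\alpha)^2+c$ with $\alpha=-a/2\neq 0$ (the paper gets this by eliminating $x,y$ from $P'(x)=P'(y)=0$, $P(x)=P(y)$; your symmetric-function computation makes that elimination explicit), and then exhibit a nonconstant pair with $g=t^{2}f$ and $P(f)=P(g)$. In fact your rational pair is the paper's counterexample in disguise: substituting $\tau=e^{z}$ into your $f(\tau),g(\tau)$ and scaling by $\alpha=-a/2$ gives exactly the paper's $f_1,g_1$, so your parametrization of the nodal quartic $\Psi=0$ via the identity $B^2-AC=t^3$ both explains where that otherwise unmotivated transcendental example comes from and shows that rational counterexamples already suffice.
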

\subsubsection{\underline{\bf ${p}$-Adic case :}} An application of {\em Theorem \ref{t3}} is {\em Example \ref{exam3.2}}, which shows that  ${ \epsilon_N, \mu  _N\leq  6}$.
\par\vspace{0.1in} Following Table gives a quick comparison between the  existing  and current lower bounds of $\epsilon_I,\epsilon_N,\mu_I$ and $\mu_N$.
\\\includegraphics[width=1.01\linewidth, height=0.38\textheight]{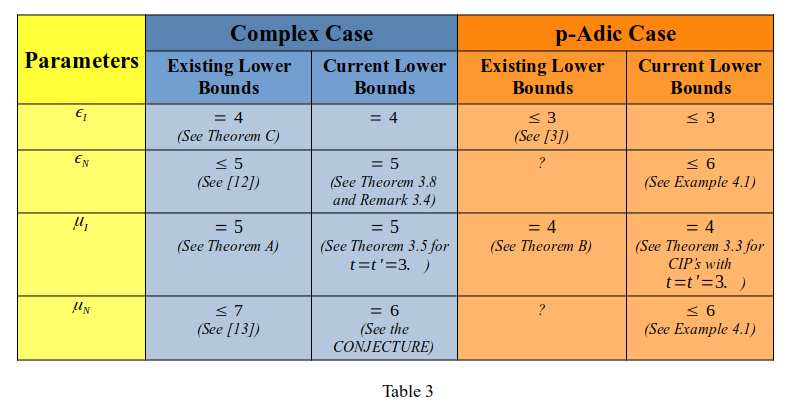}
	\section{\Large Examples}
	\begin{exm}[Application of Theorem \ref{t3}]\label{exam3.2}
		Consider the polynomial \begin{eqnarray}
			\nonumber P(z)&=&\frac{1}{6}z^6-\frac{186 }{53}z^5+\frac{1565 }{53}z^4-\frac{6630 }{53}z^3+\frac{28967 }{106}z^2-\frac{14460}{53}z+1.
		\end{eqnarray}
		Here \begin{eqnarray}
			\nonumber P'(z)&=&\frac{1}{53} (z-5) (z-4) (z-3) (z-1) (53 z-241).
		\end{eqnarray}
		So, \begin{eqnarray}
			\nonumber S=\{z:P'(z)=0\}=\left\{{1,\dfrac{241}{53},3,4,5}\right\}.
		\end{eqnarray}
		Now  \begin{itemize}
			\item ${P(1)=-\dfrac{15497}{159}}$\item ${P\left({\dfrac{241}{53}}\right)=-\dfrac{5030097474637}{66493083387}}$\item ${P(3)=-\dfrac{3979}{53}}$\item ${P(4)=P(5)=-\dfrac{12041}{159}}$.
		\end{itemize}
		\begin{center}
			{\em Table 1}\qquad\qquad\qquad\qquad\qquad\qquad\qquad\qquad{\em Table 2}\\\begin{tabular}{|c|c|c|c|}
				\hline
				${5}$&. & . &.  \\
				\hline
				${4}$&${\dfrac{241}{53}}$  &${3}$  &${1}$  \\
				\hline
			\end{tabular}\qquad\qquad\qquad\qquad\qquad\qquad\begin{tabular}{|c|c|c|c|}
				\hline
				${1}$	&.  & . &.  \\
				\hline
				${1}$&${1}$  &${1}$  &${1}$  \\
				\hline
			\end{tabular}
		\end{center}
		Clearly, ${P(z)}$ is NCIP with ${t=t'=3}$. Therefore, applying {\em Theorem \ref{t3}}, we see that ${P(z)}$ is UPM in $\mathbb{K}$.
	\end{exm} 
	\begin{rem}
		 {\em Example \ref{exam3.2}} clearly
		 shows that ${ \epsilon_N, \mu  _N\leq  6}$ for {${p}$-adic} case, as we claimed before. 
	\end{rem}
	\begin{exm}[Application of Theorem \ref{t3} and Theorem \ref{t4}]
		Consider the polynomial \begin{eqnarray}
			\nonumber P(z)&=&\frac{1}{7}z^7-\frac{23105 }{8379}z^6+\frac{19279 }{931}z^5-\frac{4285 }{57}z^4+\frac{122428 }{931}z^3-\frac{253880 }{2793}z^2+1.
		\end{eqnarray}
		Here \begin{eqnarray}
			\nonumber P'(z)&=&\dfrac{1}{2793}z(z-5) (z-4) (z-2) (z-1)(2793 z-12694).
		\end{eqnarray}
		So, \begin{eqnarray}
			\nonumber S=\left\{{0,1,2,\frac{12694}{2793},4,5}\right\}.
		\end{eqnarray}
		Now  \begin{itemize}
			\item ${P(0)=1}$\item ${P(1)=-\dfrac{129701}{8379}}$\item ${P(2)=-\dfrac{10691}{1197}}$\item ${P\left({\dfrac{12694}{2793}}\right)=-\dfrac{858908850511840736130799715}{27842988283701433932953997}}$.\item ${P(4)=P(5)=-\dfrac{263621}{8379}}$.
		\end{itemize}
		\begin{center}
			{\em Table 1}\qquad\qquad\qquad\qquad\qquad\qquad\qquad\qquad{\em Table 2}\\\begin{tabular}{|c|c|c|c|c|}
				\hline
				${5}$&. & . &. & \\
				\hline
				${4}$&$0$  &${1}$  &${2}$ &${\dfrac{12694}{2793}}$ \\
				\hline
			\end{tabular}\qquad\qquad\qquad\qquad\qquad\qquad\begin{tabular}{|c|c|c|c|c|}
				\hline
				${1}$	&.  & . &. & .\\
				\hline
				${1}$&${1}$  &${1}$  &${1}$ &${1}$ \\
				\hline
			\end{tabular}
		\end{center}
		Therefore, ${P(z)}$ is NCIP with ${t=t'=4}$. Thus, applying {\em Theorem \ref{t4}} (or {\em Theorem \ref{t3}}), we get ${P(z)}$ is UPM in $\mathbb{C}$ (and in  $\mathbb{K}$).
	\end{exm}
	
	\begin{exm}[Application of Theorem \ref{t3} and Theorem \ref{t7}]
		Consider the polynomial \begin{eqnarray}
			\nonumber P(z)&=&\frac{z^7}{7}-\frac{4071 z^6}{1316}+\frac{1277 z^5}{47}-\frac{81325 z^4}{658}+\frac{101342 z^3}{329}-\frac{540647 z^2}{1316}+\frac{90030 z}{329}+1.
		\end{eqnarray}
		Here \begin{eqnarray}
			\nonumber P'(z)&=&\frac{1}{658} (z-5) (z-4) (z-3) (z-1)^2 (658 z-3001).
		\end{eqnarray}
		So, \begin{eqnarray}
			\nonumber S=\left\{{1,\dfrac{3001}{658},3,4,5}\right\}.
		\end{eqnarray}
		Now \begin{itemize}
			\item ${P(1)=\dfrac{23845}{329}}$\item ${{P\left({\dfrac{3001}{658}}\right)=\dfrac{66183058741702202837617}{747668856695865052928}}}$\item ${P(3)=\dfrac{4223}{47}}$\item ${P(4)=P(5)=\dfrac{4147}{47}}$.
		\end{itemize}
		\begin{center}
			{\em Table 1}\qquad\qquad\qquad\qquad\qquad\qquad\qquad\qquad{\em Table 2}\\\begin{tabular}{|c|c|c|c|}
				\hline
				${5}$&. & . &.  \\
				\hline
				${4}$&${1}$  &${\dfrac{3001}{658}}$  &${3}$  \\
				\hline
			\end{tabular}\qquad\qquad\qquad\qquad\qquad\qquad\begin{tabular}{|c|c|c|c|}
				\hline
				${1}$	&.  & . &.  \\
				\hline
				${1}$&${2}$  &${1}$  &${1}$  \\
				\hline
			\end{tabular}
		\end{center}
		Thus, ${P(z)}$ is NCIP with ${t=t'=3}$. Therefore, applying {\em Theorem \ref{t7}} (or {\em Theorem \ref{t3}}), we conclude ${P(z)}$ is UPM in $\mathbb{C}$ (and in $\mathbb{K}$).
	\end{exm}
	\begin{exm}[Application of Theorem \ref{t3}, Theorem \ref{t4} and  Theorem \ref{t7}]\label{exm8}
		Consider the polynomial \begin{eqnarray}
			\nonumber P(z)&=&\frac{z^6}{6}-\left(\frac{6}{5}+\frac{2 i}{5}\right) z^5+\left(\frac{5}{2}+3 i\right) z^4-\frac{22 i }{3}z^3-\left(\frac{11}{2}-6 i\right) z^2+6 z,
		\end{eqnarray}
		where \begin{eqnarray}
			\nonumber P'(z)&=&(z-i)^2(z-1)(z-2)(z-3).
		\end{eqnarray}
		Here \begin{eqnarray}
			\nonumber S&=&\{i,1,2,3\},
		\end{eqnarray}
		with  \begin{itemize}
			\item ${P(i)=P(3)=\dfrac{9}{10}+\dfrac{9 i}{5}}$;\item ${P(1)=\dfrac{59}{30}+\dfrac{19 i}{15}}$;\item ${P(2)=\dfrac{34}{15}+\dfrac{8 i}{15}}$.
		\end{itemize}
		Clearly, ${P(z)}$ is NCIP having simple zeros only. Here  \begin{center}
			{\em Table 1}\,\,\,\,\qquad\qquad\qquad\quad {\em Table 2}\\\begin{tabular}{|c|c|c|}
				\hline
				${3}$&. &.  \\
				\hline
				${i}$& ${2}$ &${1}$  \\
				\hline
			\end{tabular}
			\qquad\qquad\qquad	\begin{tabular}{|c|c|c|}
				\hline
				${1}$&.  &.  \\
				\hline
				${2}$&${1}$  &${1}$  \\
				\hline
			\end{tabular}
		\end{center}
		So, ${t=3}$, ${t'=4}$. Therefore, using {\em Theorem \ref{t4}} (or {\em Theorem \ref{t3}}), we see that ${P(z)}$ is UPM in $\mathbb{C}$ (and in $\mathbb{K}$).\par Observe that  ${P(z)}$ also satisfies the conditions of {\em Theorem \ref{t7}}.
	\end{exm}
		\begin{rem}
			 {\em Example \ref{exm8}} 
			 shows that ${\mu  _N\leq  6}$ for complex case. 
		\end{rem}
			\begin{exm} [Application of Theorem \ref{t3} and Theorem \ref{t7}]\label{ex1}
			Let
			$$
			P(z)=z^{n}+a z^{n-m}+b z^{n-2 m}+c,
			$$
			where $a, b \in \mathbb{C}^{*}$ and $n, m \in \mathbb{N}$ such that $\operatorname{gcd}(m, n)=1, n>2 m+1, a^{2}=4 b$. Then
			$$
			P^{\prime}(z)=z^{n-2 m-1}\left(z^{m}+\frac{a}{2}\right)\left(n z^{m}+(n-2 m) \frac{a}{2}\right) .
			$$
			Suppose $e_{i}$ and $c_{i}$ be the distinct roots of the equation $z^{m}=-\frac{(n-2 m) a}{2 n}$ and $z^{m}=-\frac{a}{2}$ respectively for $i=1,2, \ldots, m$.
			Further suppose that $\lambda_{i}=-\left(e_{i}^{n}+a e_{i}^{n-m}+b e_{i}^{n-2 m}\right)$ for $i=1,2, \ldots, m$. Therefore $e_{i}$ 's are not zeros of $P(z)$ if $c \neq \lambda_{i}$. Also note that as $a^{2}=4 b$, so
			$$
			P(z)=z^{n-2 m}\left(z^{m}+\frac{a}{2}\right)^{2}+c=z^{n-2 m} \prod_{i=1}^{m}\left(z-c_{i}\right)^{2}+c .
			$$
			Hence $z=0, c_{i}$ are not zeros of $P(z)$ if $c \neq 0$. So clearly $P(z)$ has no multiple zero if $c \neq 0, \lambda_{i}$. Observe that $P(0)=P\left(c_{i}\right)=c$, which implies that $P(z)$ is an NCIP.
			\begin{center}
				\includegraphics[width=0.7\linewidth]{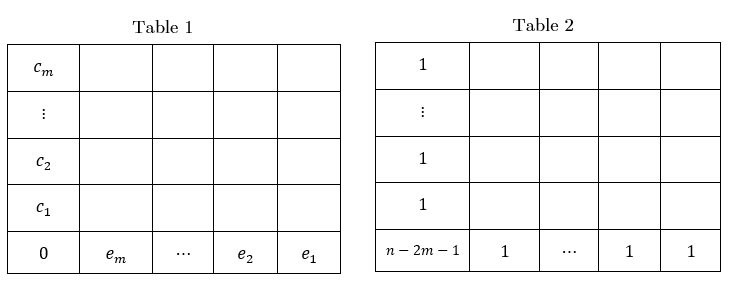}
			\end{center}
			Clearly, for ${n\geq  2m+3}$, we have ${t=m+1}$, ${t'=m+1}$. Then applying {\em Theorem \ref{t7}} (or {\em Theorem \ref{t3}}), we see that $P(z)$ is a UPM in $\mathbb{C}$ (and in $\mathbb{K}$), whenever $m\geq 2$. 
		\end{exm}
		\begin{rem}
			Note that the above example of NCIP was given by Mallick in \cite[see page 89-92]{mallick-tbi} with a long proof of almost four pages. This happened because there was no ready formula till then, to prove  this NCIP to be UPM. But now applying our theorems we have got  a tiny proof of the same just by forming the required tables.
		\end{rem}
\begin{exm}[Application of Theorem \ref{t3} and  Theorem \ref{t6}]\label{ext6}
    Consider the polynomial ${P(z)}$, where \begin{eqnarray}\label{nj1}
        P(z)&=&\dfrac{1}{6}z^6+\left(-\frac{11}{20}+\frac{1}{4} i \sqrt{\frac{19}{5}}\right)z^5+\left(-\frac{9}{16}-\frac{i \sqrt{95}}{16}\right)z^4\\\nonumber&&+\left(\frac{11}{3}-\frac{i \sqrt{95}}{3}\right)z^3+\left(-\frac{7}{2}+\frac{i \sqrt{95}}{2}\right)z^2+c,
    \end{eqnarray}
where ${c}$ is an arbitrary constant.
    Here \begin{eqnarray}\label{nj2}
        P'(z)&=&\frac{1}{4}z (z-1) (z-2) (z+2) \left(4 z+i \sqrt{95}-7\right)\\\nonumber &=& z(z-1)(z-2)Q(z),
    \end{eqnarray}
    where ${Q(z)=\dfrac{1}{4}(z+2)(4z+i\sqrt{95}-7)}$. Now one can verify that ${B_1(H_2)=\{0\}}$, ${B_2(H_2)=\{1\}}$, ${B_3(H_2)=\{2\}}$ and  \begin{eqnarray}
        \nonumber P(-2)&=&P \left(\dfrac{7}{4}-i \dfrac{\sqrt{95}}{4}\right).
    \end{eqnarray}
    So, ${P(z)}$ is NCIP. Now we see that ${P(z)}$ satisfies all the hypotheses assumed in {\em Theorem \ref{t6}}. Hence ${P(z)}$ is an NCIP, which is UPM in $\mathbb{C}$ (and in $\mathbb{K}$).
\end{exm}
\begin{exm}[Application of Theorem \ref{themm9} ]\label{exm7}
  Consider the polynomial 
  \begin{eqnarray}\label{vn1}
      P_1(z)&=&\frac{1}{2366}{z^8 (z-1)^5 \left(169 z+8 i \sqrt{35}-107\right)}+c_1,
  \end{eqnarray}
  where ${c_1}$ is any arbitrary constant. Here 
   \begin{eqnarray}
      \nonumber P_1'(z)&=&z^7(z-1)^4\left(z-\dfrac{56}{91}+\dfrac{2i\sqrt{35}}{91}\right)^2.
  \end{eqnarray}
  Clearly,  ${B_1(H_2)=\{0,1\}}$ and ${B_2(H_2)= \left\{\dfrac{56}{91}-\dfrac{2i\sqrt{35}}{91}\right\}  }$; i.e., ${t=2,t'=3.}$ Now one can easily verify that ${P_1(z)}$ satisfies all the conditions of {\em Theorem \ref{themm9}}. Hence  ${P_1(z)}$ is an NCIP, which is UPM in $\mathbb{C}$ (or in $\mathbb{K}$). \par \vspace{0.1in} Let us consider another polynomial ${P_2(z)}$, where 
  \begin{eqnarray}\label{vn2}
      P_2(z)&=&\frac{1}{3078}{ z^{11} (z-1)^7\left(162 z+i \sqrt{1463}-101\right)}+c_2,
  \end{eqnarray}
  where $c_2$ is arbitrary constant. Then
  \begin{eqnarray}
      \nonumber P_2'(z)&=&z^{10}(z-1)^6\left(z-\dfrac{209}{342}+\dfrac{i\sqrt{1463}}{342}\right)^2.
  \end{eqnarray}
  Here  ${B_1(H_2)=\{0,1\}}$, ${B_2(H_2)=\left\{\dfrac{209}{342}-\dfrac{i\sqrt{1463}}{342}\right\}}$ and ${P_2(z)}$ satisfies all the conditions of {\em Theorem \ref{themm9}}. Therefore,   ${P_2(z)}$ is  also an NCIP, which is UPM in $\mathbb{C}$ (and in $\mathbb{K}$).
\end{exm}
In order to show the  further applications of {\em Theorem \ref{themm9}}, let us quickly recall the following result.
\begin{theoD}\cite{Ripan}
    	Let $P(z)=a_nz^n+a_{n-1}z^{n-1}+\ldots+a_1z+a_0,$
    where $ a_0,a_1,\dots,a_n$ are  complex numbers with $a_n,a_0\neq 0$, $a_{i}$ being the first non-vanishing coefficient  from $a_{n-1}, a_{n-2},\ldots,a_{1}$. Let $S=\{z:P(z)=0\}.$
    Observe that $P(z)$ can be written in the form \be\label{el000}P(z)=a_{n}\prod\limits_{i=1}^{p}(z-\alpha_{i})^{m_{i}}+a_{0},\ee where $p$ denotes the   number of distinct zeros of $P(z)-a_{0}$. We also denote by $k$ the number of distinct zeros of $P^{'}(z)$. Let \begin{enumerate}
        \item[(i)] $p \geq 4$ or
        \item[(ii)] $p = 3$ and $gcd(m_i, n) = 1$ for at least one of the $m_i'$s such that $m_i \geq 2$ or
        \item[(iii)] $p = 3$ and $gcd(m_1, n)\ne 1$, where $ m_1 \geq 2,m_2=m_3=1$ and $n=\sum_{i=1}^{3}m_i \geq 5$ or
        \item[(iv)] $p=2$ and $gcd(m_i,n)=1$ for at least one of the $m_i'$s such that $n=\sum_{i=1}^{3}m_i \geq 5$ or
        \item[(v)] $p=2$ and $gcd(m_i,n)\ne 1$ for each $m_i$ such that $n \geq 2(b_1+b_2)+1$, where $b_1 = gcd(m_1,n)$ and $b_2=gcd(m_2,n)$.
    \end{enumerate}

    Suppose $f,g\in \mathcal{M}(\mathbb{C})$ be such that $f^{-1}(S)=g^{-1}(S)$.
    Then 
the following are equivalent:
\begin{itemize}
\item[(A)] $P(z)$ is a UPM;
\item[(B)]  $S$ is a URSM for $n\geq2k+7$;
\item[(C)]  $S$ is a URSM-IM for $n\geq2k+13$.

\end{itemize}
\end{theoD}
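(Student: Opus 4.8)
The plan is to close the equivalence loop $(A)\Leftrightarrow(B)\Leftrightarrow(C)$ through four implications and to notice that the structural alternatives (i)--(v) and the degree thresholds are needed in only two of them. The two ``necessity'' implications $(B)\Rightarrow(A)$ and $(C)\Rightarrow(A)$ require nothing beyond the general principle recalled in Section~1: if $S=\{z:P(z)=0\}$ is a URSM (respectively URSM-IM), then its generating polynomial is automatically a UPM, because $P(f)=P(g)$ already forces $f^{-1}(S)=g^{-1}(S)$ with (respectively ignoring) multiplicities. Thus the entire analytic content lives in the two ``sufficiency'' implications $(A)\Rightarrow(B)$ (where $n\geq 2k+7$) and $(A)\Rightarrow(C)$ (where $n\geq 2k+13$).

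For $(A)\Rightarrow(B)$ I would take meromorphic $f,g$ with $f^{-1}(S)=g^{-1}(S)$ counted with multiplicity, set $F=P(f)$ and $G=P(g)$, and observe that $F$ and $G$ share the value $0$ CM. The target is to upgrade this sharing to the identity $F\equiv G$, for once $P(f)\equiv P(g)$ is known, hypothesis $(A)$ (that $P$ is a UPM) yields $f\equiv g$ straight from the definition. To produce $F\equiv G$ I would study the auxiliary function $H=F/G=P(f)/P(g)$. Using $P'(z)=a\prod_{j=1}^k (z-d_j)^{q_j}$ together with the factorization $P(z)-a_0=a_n\prod_{i=1}^p(z-\alpha_i)^{m_i}$, I would apply Nevanlinna's second main theorem to $f$ and to $g$ relative to the critical points $d_1,\dots,d_k$ and $\infty$; since every preimage $f=d_j$ is a high-order ramification point of $F$, the resulting inequalities bound $T(r,f)$ and $T(r,g)$ by the shared-zero counting of $F$ and $G$ and show that the two characteristics have comparable growth. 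The threshold $n\geq 2k+7$ is exactly what makes the logarithmic-derivative error terms negligible against this growth, forcing $H$ to be constant (or $F,G$ to satisfy one of the finitely many Frank--Reinders exceptional relations).

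The role of the alternatives (i)--(v) is to dispose of the degenerate possibilities that survive after $H$ is constant: namely $H$ equal to a root of unity $\neq 1$, the reciprocal relation $P(f)P(g)\equiv c$, and the relations in which $f$ and $g$ are linked by a permutation of the critical preimages. Each of these would contradict either the coprimality conditions $\gcd(m_i,n)=1$ in (ii) and (iv), the non-coprime bounds in (iii) and (v), or $n=\sum_i m_i\geq 5$; when $p\geq 4$ as in (i) the configuration of critical values is already rigid enough to exclude them outright. Hence only $H\equiv 1$, i.e.\ $P(f)\equiv P(g)$, remains, and $(A)$ delivers $f\equiv g$, so $S$ is a URSM for $n\geq 2k+7$.

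For $(A)\Rightarrow(C)$ the architecture is identical, but $F$ and $G$ now share $0$ only \emph{ignoring} multiplicity. I expect this to be the main obstacle: the second main theorem must be run with truncated counting functions, and one must absorb the discrepancy $N(r,\cdot)-\overline N(r,\cdot)$ between counting with and without multiplicities. This discrepancy is precisely what inflates the admissible threshold from $2k+7$ to $2k+13$, and the delicate point is to keep the comparison $T(r,f)\asymp T(r,g)$ and the subsequent collapse of $H$ to a constant valid under the weaker hypothesis, after which the case analysis (i)--(v) excludes the exceptional relations exactly as before. Recovering $P(f)\equiv P(g)$ and invoking $(A)$ then shows $S$ is a URSM-IM for $n\geq 2k+13$, completing the cycle of equivalences.
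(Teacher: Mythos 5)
You should know at the outset that the paper contains no proof of this statement: Theorem D is quoted from Saha--Mallick \cite{Ripan} and is used in Section 4 purely as a black box, to convert the polynomials of {\em Example \ref{exm7}} into a URSM with $14$ elements and a URSM-IM with $19$ elements. So your proposal can only be measured against the standard machinery this theorem comes from (Fujimoto's CM/IM set-sharing technique, which \cite{Ripan} extends to non-critically injective polynomials via hypotheses (i)--(v)). Measured against that, your global architecture is right: the two necessity implications are soft, and all the analysis sits in $(A)\Rightarrow(B)$ and $(A)\Rightarrow(C)$, with (i)--(v) serving only to kill an exceptional functional relation. One caveat even in the soft part: $(B)\Rightarrow(A)$ needs $P$ to have only simple zeros (implicit in the theorem, since $|S|=n$ in the applications); if $P$ had a zero of multiplicity $m\geq 2$, then $P(f)=P(g)$ would only equate $m$ times the multiplicity of $f$ at a preimage with the corresponding quantity for $g$, and sharing of $S$ counting multiplicities would not follow.

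The genuine gap is in the mechanism you propose for the analytic step. First, the threshold $n\geq 2k+7$ has nothing to do with ``making the logarithmic-derivative error terms negligible'': those are $S(r,f)=o(T(r,f))$ automatically, for every $n$. The threshold is what makes the \emph{main} counting terms of the second main theorem (this is where $k$ enters, through the zeros of $(P(f))'=P'(f)f'$ lying over the $k$ critical points $d_j$, and through the pole terms $\overline{N}(r,\infty;f)$, $\overline{N}(r,\infty;g)$, which are \emph{not} shared) strictly smaller than the characteristic, so that a suitable auxiliary function is forced to vanish identically. Second, ``$H=F/G$ is constant'' is neither derivable as you describe nor the correct intermediate statement: since $f^{-1}(S)=g^{-1}(S)$ says nothing about poles, $H$ has zeros at the poles of $g$ and poles at the poles of $f$, each with multiplicity $n$ times the pole order, so $N(r,0;H)+N(r,\infty;H)$ can be of the order $n\left(T(r,f)+T(r,g)\right)$; no second-main-theorem estimate makes such a function constant. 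The proofs in this literature instead normalize $F_1=1-P(f)/a_0$, $G_1=1-P(g)/a_0$, so that $F_1,G_1$ share the value $1$ CM (resp.\ IM), run a Yi--Fujimoto auxiliary function such as $\left(\dfrac{F_1''}{F_1'}-\dfrac{2F_1'}{F_1-1}\right)-\left(\dfrac{G_1''}{G_1'}-\dfrac{2G_1'}{G_1-1}\right)$ through the second main theorem, and obtain under the degree threshold the dichotomy $F_1\equiv G_1$ or $F_1G_1\equiv 1$ (the IM case is what costs the extra $6$ in $2k+13$). The first alternative is $P(f)\equiv P(g)$, where (A) finishes; the second is the single exceptional relation $a_n^{2}\prod_{i=1}^{p}(f-\alpha_i)^{m_i}\prod_{i=1}^{p}(g-\alpha_i)^{m_i}\equiv a_0^{2}$, and hypotheses (i)--(v) -- through $p$, the $m_i$ and the gcd conditions -- are exactly what make this product equation unsolvable by nonconstant meromorphic functions. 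Your list of degenerate cases (``$H$ a root of unity $\neq 1$'', ``permutations of the critical preimages'') does not match this: those cases never arise, while the case that does arise, $F_1G_1\equiv 1$, is not addressed by anything concrete in your sketch. So the skeleton is right, but the step on which the whole theorem rests -- passing from shared zeros to $P(f)\equiv P(g)$ -- is mis-specified and would fail as written.
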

For the polynomial ${P_1(z)}$ mentioned in {\em Example \ref{exm7}}, we see that the derivative index ${k=3}$ and the degree of the polynomial is ${n=14}$. So, ${n> 2k+7}$. Also, the condition \textit{(ii)} of {\em Theorem D} is  satisfied. Thus applying {\em Theorem D}, we conclude that the zeros of the NCIP ${P_1(z)}$ forms a URSM that contains ${14}$ elements. Similarly, the zeros of ${P_2(z)}$ forms a URSM-IM that contains ${19}$ elements.
     \par \vspace{0.1in} To proceed further we need the following definitions.
	\section{\Large Definitions}
	
	
	 For standard notations and definitions on Complex and ${p}$-adic Nevanlinna calculus, we refer our reader to follow \cite{p-adBou-90,p-adkhai-88,heyman}. Though, for the convenience of the reader, we recall some  of them which  will be frequently used throughout the paper. From now on, by any meromorphic (entire) function $f$, we mean $f\in \mathcal{M}(\mathbb{L})$ $( \mathcal{A}(\mathbb{L}))$ unless otherwise stated.
	\begin{defi}
	For any non-constant meromorphic function ${h}$, by ${S(r,h)}$, we mean any quantity, which is equal to ${o(T(r,h))}$ as ${r \to \infty}$, ${r\not\in E}$, where ${E}$ is a set of positive real numbers with finite linear measure.
	\end{defi}
	\begin{defi}
		For two non-constant meromorphic functions ${f}$ and ${g}$, we say that ${f,g}$ share the value ${a}$ CM(counting multiplicities)  if ${f-a}$ and ${g-a}$ have same set of zeros, counted with multiplicities. On the other hand, if we ignore the multiplicities, we say that ${f,g}$ share the value ${a}$ IM (ignoring multiplicities).
	\end{defi}
	\begin{defi}\cite{mues-89,c.c.y. book}
	By ${N_E(r,a)}$, we denote the reduced counting function of  common zeros of ${f-a}$ and ${g-a}$ with the same multiplicities. 
	\end{defi}
	\begin{defi}\cite{mues-89,c.c.y. book}
		Let ${f,g}$ be two non-constant meromorphic functions. We say that ${f,g}$ share the value ${a}$ \textbf{``CM"}, if \begin{eqnarray}
			\nonumber  \overline{N}(r,a;f)-N_E(r,a)&=&S(r,f)
		\end{eqnarray}
		and \begin{eqnarray}
			\nonumber  \overline{N}(r,a;g)-N_E(r,a)&=&S(r,g).
		\end{eqnarray}
	\end{defi}
	Clearly, if ${f,g}$ share a value ${a}$ CM, then ${f,g}$ share ${a}$ \textbf{``CM"}, but the converse is not true.	
	\section{\Large {Lemmas}}
	Let us choose a constant ${c}$ such that ${c+d_i\neq 0}$ for all ${d_i\in B}$. Now we define \begin{eqnarray}\label{phi}
		\Phi&=&\dfrac{1}{f+c}-\dfrac{1}{g+c}.
	\end{eqnarray}
The assumption ${P(f)=P(g)}$ implies
\begin{eqnarray}\label{fri1}
    T(r,f)=T(r,g)+O(1) \quad \text{ and }\quad S(r,f)=S(r,g).
\end{eqnarray}

	\begin{lem}\label{lem1}
	Let ${d\in S}$. If ${f(z_0)=d}$ of multiplicity ${p_1(\geq 1)}$ and ${g(z_0)=d}$ of multiplicity ${p_2(\geq 1)}$, then ${P(f)=P(g)}$ implies ${p_1=p_2}$.
\end{lem}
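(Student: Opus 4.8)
The plan is to exploit the fact that, because $d\in S$, the point $d$ is a critical point of $P$, so that $P(w)-P(d)$ vanishes to an order strictly larger than one at $w=d$; I would then transfer this order of vanishing through the compositions $P\circ f$ and $P\circ g$ and compare them at $z_0$.

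First I would record that, since $d\in S=\{z:P'(z)=0\}$, we have $d=d_i$ for some $i$, and in view of \eqref{p'} the derivative $P'$ has a zero of multiplicity $q_i\geq 1$ at $d$. Consequently $P(w)-P(d)$ has a zero of multiplicity exactly $q_i+1$ at $w=d$; equivalently, we may factor $P(w)-P(d)=(w-d)^{q_i+1}\psi(w)$, where $\psi$ is a polynomial with $\psi(d)\neq 0$.

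Next I would substitute $w=f(z)$ and $w=g(z)$ into this factorisation. Since $f(z_0)=d$ with multiplicity $p_1$, the factor $f-d$ contributes a zero of order $p_1$ at $z_0$, while $\psi(f(z_0))=\psi(d)\neq 0$; hence $P(f)-P(d)$ vanishes to order exactly $(q_i+1)p_1$ at $z_0$. The identical computation with $g$ in place of $f$ shows that $P(g)-P(d)$ vanishes to order exactly $(q_i+1)p_2$ at $z_0$.

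Finally, the hypothesis $P(f)=P(g)$ means $P(f)-P(d)$ and $P(g)-P(d)$ are the same function, so their orders of vanishing at $z_0$ must coincide, giving $(q_i+1)p_1=(q_i+1)p_2$. Since $q_i+1\geq 2>0$, this immediately yields $p_1=p_2$. The argument is entirely elementary; the only point that needs a moment's care is the non-vanishing $\psi(d)\neq 0$, since it is precisely this that guarantees the order of vanishing of the composite equals the product $(q_i+1)p_j$ rather than something larger, and it is the criticality of $d$ (i.e.\ $d\in S$) that makes the factor $(w-d)^{q_i+1}$ appear at all.
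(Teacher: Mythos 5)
Your proof is correct and follows essentially the same route as the paper: both factor $P(z)-P(d)=(z-d)^{q_d+1}Q(z)$ with $Q(d)\neq 0$ (your $\psi$ is the paper's $Q$) and compare the exact orders of vanishing $p_1(q_d+1)$ and $p_2(q_d+1)$ of the two sides of $P(f)-P(d)=P(g)-P(d)$ at $z_0$. Your write-up is in fact slightly more explicit than the paper's, which leaves the order-of-vanishing comparison implicit in the word ``clearly.''
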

\begin{proof}
	Let ${f(z_0)=d}$ of multiplicity ${p_1}$ and ${g(z_0)=d}$ of multiplicity ${p_2}$. Let ${q_d}$ (in {\em Table 2}) corresponds to ${d}$ (in {\em Table 1}). Now ${P(f)=P(g)}$ implies that \begin{eqnarray}\label{xss16}
		\nonumber P(f)-P(d)&=&P(g)-P(d)\\ \Rightarrow  (f-d)^{q_d+1}Q(f) &=& (g-d)^{q_d+1}Q(g),
	\end{eqnarray}
	where ${Q(z)}$ is a polynomial with ${Q(d)\neq 0}$. Clearly, \eqref{xss16} implies $p_1=p_2$.
\end{proof}
\begin{lem}\label{lem4}
	Let ${d\in B_i(H_2)}$. \begin{itemize}
		\item [(i)] If ${f(z_0)=d}$ with ${g'(z_0)\neq 0}$, then ${P(f)=P(g)}$ implies ${g(z_0)=d}$. \item [(ii)] If ${g(z_0)=d}$ with ${f'(z_0)\neq 0}$, then ${P(f)=P(g)}$ implies ${f(z_0)=d}$.
	\end{itemize}
\end{lem}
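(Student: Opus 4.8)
The plan is to prove (i) by a purely local multiplicity count at $z_0$, exploiting the factorization of $P(z)-P(d)$ forced by $d$ being a zero of $P'$ of multiplicity $q_d$, and then reading off a contradiction from the combinatorial meaning of $d\in B_i(H_2)$; part (ii) follows verbatim after interchanging $f$ and $g$, since $P(f)=P(g)$ is symmetric.

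First I would record, exactly as in the proof of {\em Lemma \ref{lem1}}, that since $d$ is a zero of $P'$ of multiplicity $q_d\,(\geq 1)$ one may write $P(z)-P(d)=(z-d)^{q_d+1}Q(z)$ with $Q(d)\neq 0$, so the hypothesis $P(f)=P(g)$ rewrites as $(f-d)^{q_d+1}Q(f)=(g-d)^{q_d+1}Q(g)$. Because $f(z_0)=d$ with some multiplicity $p_1\geq 1$ and $Q(d)\neq 0$, the left-hand side has a zero at $z_0$ of order exactly $p_1(q_d+1)\geq q_d+1\geq 2$. Thus the right-hand side must vanish to this same order at $z_0$, and the whole proof consists in showing that this is incompatible with $g(z_0)\neq d$.

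Next I would assume, for contradiction, that $g(z_0)=w\neq d$. Evaluating the identity at $z_0$ gives $P(w)=P(d)$, while $g'(z_0)\neq 0$ forces $g-w$ to have a simple zero at $z_0$. If $w$ is not a critical point, then $w$ is a simple zero of $P(z)-P(d)$, hence of $Q$, so the right-hand side vanishes to order exactly $1$ at $z_0$ --- impossible, since the left-hand side vanishes to order $\geq 2$. If $w$ is a critical point, then $P(w)=P(d)$ places $w$ in the same column of {\em Table 1} as $d$, with some multiplicity $q_w$, and since $w\neq d$ it is a zero of $Q$ of order $q_w+1$; matching orders on the two sides yields the key numerical relation $p_1(q_d+1)=q_w+1$.

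The crux, and the step I expect to be the main obstacle, is to contradict $p_1(q_d+1)=q_w+1$ using only that $d\in B_i(H_2)$, i.e.\ that $q_d\in A_i(H_2)$. If $p_1=1$ then $q_w=q_d$, which is impossible because $q_d\in A_i$ means the value $q_d$ occurs exactly once among the column multiplicities $C_i'$, so no distinct $w$ in the column can share it. If $p_1\geq 2$ then $q_w+1\geq 2(q_d+1)$, so $q_w\geq 2q_d+1>q_d$; since $q_d\in A_i(H_1)$ already exceeds every repeated multiplicity of the column, the strictly larger $q_w$ also exceeds them all and so cannot itself be a repeated multiplicity, whence $q_w\in A_i(H_1)$ and therefore $q_w\leq q_{i1}=\max A_i(H_1)$. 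Combining $2q_d+1\leq q_w\leq q_{i1}$ with the defining inequality $q_{i1}+1<2(q_d+1)$ of $A_i(H_2)$ (i.e.\ $q_{i1}\leq 2q_d$) gives $2q_d+1\leq 2q_d$, a contradiction. Hence every case fails and $g(z_0)=d$, proving (i). The only delicate bookkeeping is correctly turning membership in $A_i$, $A_i(H_1)$ and $A_i(H_2)$ into the two facts that close the argument: that $q_d$ is unrepeated in its column, and that $q_{i1}\leq 2q_d$.
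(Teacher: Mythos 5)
Your proof is correct, and it is a genuinely leaner organization of the same underlying mechanism the paper uses: comparing orders of vanishing at $z_0$ in $P(f)-P(d)=P(g)-P(d)$ and closing the argument with the defining inequality $q_{i1}+1<2(q_d+1)$ of $A_i(H_2)$. The differences are worth recording. The paper factorizes $P(z)-P(d)$ over the whole column $C_i$, keeping the $B_i(H_2)$-factors separate from the rest (its equation \eqref{xs4}), proves the auxiliary inequality \eqref{xs*} that every column multiplicity outside $A_i(H_2)$ lies strictly below $\min A_i(H_2)$, invokes the differentiated identity $f'P'(f)=g'P'(g)$ to force $g(z_0)\in S$, and then runs an element-by-element argument through $B_1(H_2)$ in decreasing order of multiplicity ($d_{11}$ first, then $d_{12}$, ``continuing in this way''). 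You instead lump everything except $(z-d)^{q_d+1}$ into $Q$, dispose of the possibility $g(z_0)\notin S$ by pure order counting (order exactly $1$ on the right against order at least $q_d+1\ge 2$ on the left, so no derivative identity is needed), and obtain the single relation $p_1(q_d+1)=q_w+1$ uniformly for every $d\in B_i(H_2)$ at once. Your closing dichotomy --- $p_1=1$ forces $q_w=q_d$, contradicting that $q_d\in A_i$ is unrepeated in $C_i'$; $p_1\ge 2$ forces $q_w\ge 2q_d+1>q_d$, and since $q_w$ then exceeds every repeated multiplicity of the column it must itself lie in $A_i(H_1)$, whence $q_w\le q_{i1}\le 2q_d$, a contradiction --- packages in two lines what the paper extracts through its induction plus \eqref{xs*}. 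The one observation you add that the paper never states explicitly is that any competing multiplicity strictly larger than $q_d$ automatically belongs to $A_i(H_1)$ and is therefore bounded by $q_{i1}$; this is what makes your argument uniform in $d$. What the paper's route buys is explicit machinery (the full column factorization and \eqref{xs*}) that reappears almost verbatim in its proofs of Lemma \ref{e4} and Lemma \ref{lem6}; what yours buys is brevity, no induction over the column, and independence from the identity $f'P'(f)=g'P'(g)$. Your reduction of part (ii) to part (i) by symmetry is also exactly what the paper does.
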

\begin{proof}
	We shall only prove (i). The proof of (ii) is similar.
	\par Without loss of generality, let us assume that ${d\in B_1(H_2)}$. Let \begin{eqnarray}
		\nonumber 	B_1(H_2)&=&\{d_{11},d_{12},\cdots, d_{1m_1} \}
	\end{eqnarray}
	and \begin{eqnarray}
		\nonumber 	A_1(H_2)&=&\{q_{11},q_{12},\cdots, q_{1m_1} \},
	\end{eqnarray}
	where ${q_{1j}}$ (in {\em Table 2}) corresponds to ${d_{1j}}$ (in {\em Table 1}).
	\par   Let ${d_1,d_2,\cdots,d_u}$ be all the zeros of ${P'(z)}$ such that ${d_j\in C_1\setminus B_1(H_2)}$ and ${q_j\in C_1'\setminus A_1(H_2)}$, for ${j=1,2,\cdots,u}$. Therefore, \begin{eqnarray}\label{cvn1}
		C_1&=&\{d_{11},d_{d_{12}},\cdots,d_{1m_1},d_1,\cdots,d_u\}.
	\end{eqnarray}
	Again, as ${P(d_{11})=P(d_{{12}})=\cdots=P(d_{1m_1})=P(d_1)=\cdots P(d_u)}$, we can write  \begin{eqnarray}\label{xs4}
		P(z)-P(d_{1j})&=& \left(\prod_{j=1}^{m_1}(z-d_{1j})^{q_{1j}+1} \right) \left(\prod_{j=1}^{u}(z-d_j)^{q_j+1}\right)Q(z),
	\end{eqnarray}
	where ${Q(z)}$ is a polynomial of degree ${\geq 0}$, and ${S\cap \{z:Q(z)=0\}=\phi}$.
	\par  Again, from the definition of sets ${A_1(H_2)}$ and ${B_1(H_2)}$, we see that \begin{eqnarray}\label{xs*}
		q_j&<&\min\{q_{11},\cdots,q_{1m_1}\}.
	\end{eqnarray}
	Now, ${P(f)-P(d_{1j})=P(g)-P(d_{1j})}$ implies
	\begin{eqnarray}\label{xs5}
		&& \left(\prod_{j=1}^{m_1}(f-d_{1j})^{q_{1j}+1} \right) \left(\prod_{j=1}^{u}(f-d_j)^{q_j+1}\right)Q(f)\\\nonumber=&&  \left(\prod_{j=1}^{m_1}(g-d_{1j})^{q_{1j}+1} \right) \left(\prod_{j=1}^{u}(g-d_j)^{q_j+1}\right)Q(g).
	\end{eqnarray}
	
	Again, differentiating ${P(f)=P(g)}$ both side, we get \begin{eqnarray}\label{xs**}
		f'P'(f)&=&g'P'(g).
	\end{eqnarray}
	\par Let ${f(z_0)=d_{11}}$ of multiplicity ${p_1}$ and ${g(z_0)=b}$ of multiplicity ${q_1}$. Let ${g'(z_0)\neq 0}$, i.e., ${q_1=1}$.
	Now from \eqref{xs**}, we see that ${g(z_0)\in S}$.
	Again, as ${S\cap \{z:Q(z)=0\}=\phi}$, we have ${Q(b)\neq 0}$. Therefore, from \eqref{xs5}, we must have either ${\prod_{j=1}^{u}(b-d_j)=0}$, or ${\prod_{j=1}^{m_1}(b-d_{1j})=0}$. 
	\par \vspace{0.1in}{\textbf{Case-1:}} Let ${\prod_{j=1}^{u}(b-d_j)=0}$. Now
     comparing the powers of $(z-z_0)$ on both  sides of \eqref{xs5}, we get \begin{eqnarray}
		\nonumber p_1(q_{11}+1)&=&q_j+1\\\nonumber &< & 1+\min\{q_{11},\cdots,q_{1m_1}\}, \quad \text{ [using \eqref{xs*}]}
	\end{eqnarray}
	which is a contradiction, as ${p_1\geq 1}$.
	\par\vspace{0.1in} {\textbf{Case-2:}} Let ${\prod_{j=1}^{m_1}(b-d_{1j})=0}$, i.e., ${b=d_{1i}}$ for some ${i\in \{1,2,\cdots,m_1\} }$. Again, from \eqref{xs5}, we see that \begin{eqnarray}\label{xs7}
		p_1(q_{11}+1)&=&q_{1i}+1, \quad\text{ for some ${i\in \{1,2,\cdots,m_1\} }$}.
	\end{eqnarray}
	Since ${q_{11}>q_{1i}}$ for ${i=2,3,\cdots,m_1}$, so \eqref{xs7} holds only if ${i=1}$ and ${p_1=1}$, i.e., ${g(z_0)=b=d_{11}}$. \par Thus in any case, we see that \begin{eqnarray}\label{xs9}
		f(z_0)=d_{11},\,\,\,\, g'(z_0)\neq 0&\Rightarrow &g(z_0)=d_{11}.
	\end{eqnarray}	
	\par\vspace{0.1in} Let ${f(z_0)=d_{12}}$ of multiplicity ${p_2}$ and ${g'(z_0)\neq 0}$. Then from \eqref{xs**}, we have ${g(z_0)\in S}$. Let ${g(z_0)= b_2 }$ of multiplicity ${1}$. Again, as ${S\cap \{z: Q(z)=0\}=\phi}$, putting ${z=z_0}$ in \eqref{xs5}, we get either ${\prod_{j=1}^{u}(b_2-d_{j})=0}$ or ${\prod_{j=1}^{m_1}(b_2-d_{1j})=0}$.
	\par  Now, if ${\prod_{j=1}^{u}(b_2-d_{j})=0}$, then  proceeding similarly like {\em Case-1}, we arrive at a contradiction. \par If ${\prod_{j=1}^{m_1}(b_2-d_{1j})=0}$, then proceeding similarly like {\em Case-2}, we see that ${b_2\not \in  \{d_{13},d_{14},\cdots d_{1u}\}}$. Therefore, we must have ${b_2\in \{d_{11},d_{12}\}}$. Let ${b_2=d_{11}}$. Now putting ${z=z_0}$ in \eqref{xs5}, we see that \begin{eqnarray}\label{xs10}
		p_2(1+q_{12})&=&1+q_{11}.
	\end{eqnarray}
	Clearly, ${p_2\neq 1}$ as ${q_{11}>q_{12}}$. So, ${p_2\geq 2}$. Hence ${1+q_{11}\geq 2(1+q_{12})}$, which is a contradiction as ${d_{11},d_{12}\in B_1(H_2)}$ and ${q_{11}=\max \{q:q\in C_1'\}}$. Thus the only possibility is ${b_2=d_{12}}$. \par Therefore,
	\begin{eqnarray}\label{xs11}
		f(z_0)=d_{12},\,\,\,\, g'(z_0)\neq 0 &\Rightarrow &g(z_0)=d_{12}.
	\end{eqnarray}
	\par Continuing in this way, for any ${d\in \{d_{11},d_{12},\cdots,d_{1m_1}\}}$, we see that ${f(z_0)=d}$, ${g'(z_0)\neq 0}$ implies ${g(z_0)=d}$.
	\par This completes the proof of the lemma.
	
\end{proof}

\begin{lem}\label{e2}
	Let ${d\in B_l(H_2)}$. \begin{itemize}
		\item [(i)] If ${f(z_0)=d}$; ${g'(z_0)=0}$; ${f'(z_0)\neq 0}$, then ${P(f)=P(g)}$ implies  ${g(z_0)\not\in S.}$
		\item [(ii)] If ${g(z_0)=d}$; ${f'(z_0)=0}$; ${g'(z_0)\neq 0}$, then ${P(f)=P(g)}$ implies ${f(z_0)\not\in S.}$
	\end{itemize}
\end{lem}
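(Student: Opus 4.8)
The plan is a proof by contradiction that reduces the statement to a single relation among multiplicities, in the spirit of {\em Lemma \ref{lem4}}. I shall prove (i); part (ii) is obtained verbatim after interchanging the roles of $f$ and $g$. So assume $d\in B_\lambda(H_2)$, $f(z_0)=d$ with $f'(z_0)\neq 0$ and $g'(z_0)=0$, and suppose, contrary to the claim, that $c:=g(z_0)\in S$. Since $P(f)=P(g)$ gives $P(c)=P(g(z_0))=P(f(z_0))=P(d)$, the critical point $c$ lies in the same column $C_\lambda$ as $d$, and $Q(c)\neq 0$ because $S\cap\{z:Q(z)=0\}=\phi$. Write $q_d,q_c$ for the multiplicities of $d,c$ in $P'(z)$ and put $p_2=\operatorname{ord}_{z_0}(g-c)$; the hypotheses $g(z_0)=c$, $g'(z_0)=0$ force $p_2\geq 2$.

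I would then run the order count used for \eqref{xs5}. Starting from
\[
P(z)-P(d)=\Bigl(\prod_{d'\in C_\lambda}(z-d')^{\,q_{d'}+1}\Bigr)Q(z),
\]
and substituting into $P(f)-P(d)=P(g)-P(d)$, near $z_0$ the only vanishing factor on the left is $(f-d)^{q_d+1}$, of order $q_d+1$ since $f-d$ has a simple zero, while on the right only $(g-c)^{q_c+1}$ vanishes, of order $p_2(q_c+1)$. Equating orders yields
\[
q_d+1=p_2\,(q_c+1),\qquad p_2\geq 2.
\]
Differentiating $P(f)=P(g)$ to $f'P'(f)=g'P'(g)$ and comparing orders reproduces the same relation, so it offers no extra leverage; the proof therefore hinges on excluding this relation.

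Here I would split on the location of $c$. If $c\in B_\lambda(H_2)$, then {\em Lemma \ref{lem4}}(ii) applies at $z_0$: from $g(z_0)=c\in B_\lambda(H_2)$ and $f'(z_0)\neq 0$ it gives $f(z_0)=c$, hence $c=d$; but then $f$ and $g$ attain the common value $d$ at $z_0$ with multiplicities $1$ and $p_2$, so {\em Lemma \ref{lem1}} forces $p_2=1$, contradicting $p_2\geq 2$. The complementary case $c\in C_\lambda\setminus B_\lambda(H_2)$ is the one I expect to be the real obstacle: the only structural information available is the strict bound $q_c<\min\{q:q\in A_\lambda(H_2)\}\leq q_d$ recorded in \eqref{xs*}, and once $p_2\geq 2$ the defining factor-two inequality of $A_\lambda(H_2)$ points the wrong way, so $q_d+1=p_2(q_c+1)$ is no longer contradicted by the multiplicities alone. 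To close this case I plan to use the full strength of the defining inequalities of $B_\lambda(H_2)$ together with the global identity $P(f)=P(g)$---rather than the order balance at the single point $z_0$---so as to rule out a multiple $c$-point of $g$ facing a simple $d$-point of $f$; making this last exclusion rigorous is the step I anticipate will require the most care.
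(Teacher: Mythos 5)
Your proposal is not a complete proof, and the gap is exactly where you say it is: the case $c=g(z_0)\in C_l\setminus B_l(H_2)$ is never closed, only flagged. Your diagnosis of why it resists local arguments is correct. In that case the only available facts are the order balance $q_d+1=p_2(q_c+1)$ with $p_2\geq 2$, the inequality \eqref{xs*} (every multiplicity in $C_l'\setminus A_l(H_2)$ is below $\min\{q:q\in A_l(H_2)\}\leq q_d$), and the factor-two inequality defining $A_l(H_2)$; these are mutually consistent. For instance, if $C_l$ consists of two critical points of multiplicities $q_d=5$ and $q_c=2$, then $A_l(H_2)=\{5\}$ (since $5+1\geq 2(2+1)$), so $d\in B_l(H_2)$ and $c\notin B_l(H_2)$, while $q_d+1=6=2(q_c+1)$ realizes the order balance with $p_2=2$; neither \eqref{xs5}-type comparisons at $z_0$ nor the derivative identity $f'P'(f)=g'P'(g)$ excludes a simple $d$-point of $f$ facing a double $c$-point of $g$. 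So the closing step you defer to "the global identity" is precisely the proof that is missing, and nothing in your sketch indicates how it would go.

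For comparison, the paper's own proof makes no such case distinction: it observes that $g(z_0)\in C_l$, rules out $g(z_0)=d$ by \emph{Lemma \ref{lem1}} (the multiplicities $1$ and $p_2\geq2$ would have to agree), and then applies \emph{Lemma \ref{lem4}}(ii) to the value $d'=g(z_0)$, concluding $f(z_0)=d'$ and contradicting $f(z_0)=d$. Note, however, that \emph{Lemma \ref{lem4}} is stated and proved only for values lying in some $B_i(H_2)$, whereas here $d'$ is only known to lie in $C_l$; so the paper disposes of your hard case by invoking \emph{Lemma \ref{lem4}} beyond its stated hypotheses, exactly at the point where your counting shows that the hypothesis $d'\in B_l(H_2)$ cannot be dropped for free. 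Your treatment of the subcase $c\in B_l(H_2)$ (via \emph{Lemma \ref{lem4}}(ii) followed by \emph{Lemma \ref{lem1}}) is correct and matches the paper's ingredients in a different order. But as submitted the proposal is incomplete: to finish, you must either prove an extension of \emph{Lemma \ref{lem4}}(ii) to values in $C_l\setminus B_l(H_2)$ under the additional information that the opposing function takes a $B_l(H_2)$-value simply---which, as you show, cannot follow from comparison of orders at $z_0$ alone---or supply a genuinely global argument, which neither you nor the paper provides.
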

\begin{proof}
	Let ${f(z_0)=d\in B_l(H_2)}$. Now ${P(f)=P(g)}$ implies either ${g(z_0)\in C_l}$ or  ${g(z_0)\not\in S}$.\par Assume that ${g(z_0)\in C_l}$. Using {\em Lemma \ref{lem1}}, we see that ${g(z_0)\neq d}$. Let ${g(z_0)=d'(\neq d)\in C_l}.$  Again, applying {\em Lemma \ref{lem4}}, we get
	\begin{eqnarray}
		\nonumber g(z_0)=d',f'(z_0)\neq 0&\Rightarrow &f(z_0)=d',
	\end{eqnarray}
	which is a contradiction as ${f(z_0)=d(\neq d')}$. Hence ${g(z_0)\not\in S}$.
	\par Proof of (ii) is similar.
\end{proof}
\begin{lem}\label{e4}
	Let ${d\in B_i(H_2)}$. \begin{itemize}
		\item [(i)] If ${f(z_0)=d}$; ${g'(z_0)=0}$ with ${f'(z_0)=0}$ and ${g(z_0)=d'(\neq d)\in S}$; or \item [(ii)] If ${g(z_0)=d}$; ${f'(z_0)=0}$ with ${g'(z_0)=0}$ and ${f(z_0)=d'(\neq d)\in S}$, 
	\end{itemize}
	then  ${P(f)=P(g)}$ implies  \begin{itemize}
		\item [(I)] ${f''(z_0)=0}$, when ${q_{d'}>q_d}$; \item [(II)] ${g''(z_0)=0}$, when ${q_{d'}<q_d}$,
	\end{itemize}
	where ${q_d,q_{d'}}$ (in {\em Table 2}) corresponds to ${d,d'}$ (in {\em Table 1}) respectively.
\end{lem}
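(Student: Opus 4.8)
The plan is to work locally at $z_0$ and to distill the identity $P(f)=P(g)$ into a single numerical relation between the multiplicities with which $f$ attains $d$ and $g$ attains $d'$; both conclusions then drop out of an elementary inequality. I would prove only (i), since (ii) follows by interchanging $(f,d,q_d)$ with $(g,d',q_{d'})$. First note that $f(z_0)=d$ and $g(z_0)=d'$ are finite, so $f$ and $g$ are holomorphic near $z_0$ and admit honest power-series expansions there. Evaluating $P(f)=P(g)$ at $z_0$ gives $P(d)=P(d')$. Since $d\in B_i(H_2)\subseteq S$ and $d'\in S$ are zeros of $P'$ of orders $q_d$ and $q_{d'}$, the point $d$ (resp. $d'$) is a zero of $P(z)-P(d)=P(z)-P(d')$ of order $q_d+1$ (resp. $q_{d'}+1$); thus I would write $P(z)-P(d)=(z-d)^{q_d+1}R_d(z)$ and $P(z)-P(d')=(z-d')^{q_{d'}+1}R_{d'}(z)$ with $R_d(d)\neq 0$ and $R_{d'}(d')\neq 0$, exactly the type of factorization already used in \eqref{xs4}.

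Substituting $f$ and $g$ and using $P(f)-P(d)=P(g)-P(d')$ (which is just $P(f)=P(g)$ combined with $P(d)=P(d')$), I would obtain
\[
(f-d)^{q_d+1}R_d(f)=(g-d')^{q_{d'}+1}R_{d'}(g).
\]
Writing $p_1$ and $p_2$ for the orders of the zeros of $f-d$ and $g-d'$ at $z_0$, the hypotheses $f'(z_0)=0$ and $g'(z_0)=0$ force $p_1\geq 2$ and $p_2\geq 2$. Because $R_d(f(z_0))=R_d(d)\neq 0$ and $R_{d'}(g(z_0))=R_{d'}(d')\neq 0$, the factors $R_d(f)$ and $R_{d'}(g)$ are nonvanishing at $z_0$, so matching the orders of the zero at $z_0$ on the two sides yields the key relation $p_1(q_d+1)=p_2(q_{d'}+1)$.

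From this relation the two cases are immediate. For (I), assuming $q_{d'}>q_d$, a value $p_1=2$ (i.e. $f''(z_0)\neq 0$) would give $2(q_d+1)=p_2(q_{d'}+1)\geq 2(q_{d'}+1)>2(q_d+1)$, a contradiction; hence $p_1\geq 3$, that is $f''(z_0)=0$. For (II), assuming $q_{d'}<q_d$, a value $p_2=2$ (i.e. $g''(z_0)\neq 0$) would force $p_1(q_d+1)=2(q_{d'}+1)$ while $p_1(q_d+1)\geq 2(q_d+1)>2(q_{d'}+1)$, again a contradiction; hence $p_2\geq 3$, i.e. $g''(z_0)=0$. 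The only genuinely delicate point is the order-counting that produces the clean identity $p_1(q_d+1)=p_2(q_{d'}+1)$: one must verify that the residual factors $R_d(f)$ and $R_{d'}(g)$ contribute no vanishing at $z_0$, so that the exponents read off without correction. Once that identity is secured, the two cases are forced purely by $p_1,p_2\geq 2$, and no further computation is needed.
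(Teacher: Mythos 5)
Your proof is correct and follows essentially the same route as the paper: both arguments reduce $P(f)=P(g)$ to the local multiplicity identity $p_1(q_d+1)=p_2(q_{d'}+1)$ at $z_0$ (the paper's $\alpha(q_{ik}+1)=\beta(q'+1)$) and then obtain (I) and (II) by the identical elementary inequality using $p_1,p_2\geq 2$. The only cosmetic difference is that you extract this identity from two separate local factorizations of $P(z)-P(d)$ and $P(z)-P(d')$ together with $P(d)=P(d')$ (read off by evaluating at $z_0$), whereas the paper writes the single factorization of $P(z)-P(d_{ik})$ over the entire column $C_i$ of Table 1; the order-counting and the contradiction steps are the same.
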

\begin{proof}
	We shall only prove (i). The proof of (ii) is similar.
	\par Let ${B_i(H_2)=\{d_{i1},d_{i2},\cdots,d_{im_i}\}}$ and ${A_i(H_2)=\{q_{i1},q_{i2},\cdots,q_{im_i}\}}$, where ${d_{ij}}$ (in {\em Table 1}) corresponds to ${q_{ij}}$ in {\em Table 2}.
	\par  Let ${f(z_0)=d_{ik}}$ of multiplicity ${\alpha (\geq 2) }$, for some ${k\in \{1,2,\cdots,m_i\}}$. From ${P(f)=P(g)}$, we see that ${d_{ik}, g(z_0)}$ belong to the same column ${C_i}$. Let ${g(z_0)=d'}$ of multiplicity ${\beta (\geq 2) }$, for some ${d'\in C_i}$ with ${d_{ik}\neq d'}$. Let ${q_{ik}}$ and ${q'}$ (in {\em Table 2}) be the corresponding elements of ${d_{ik}}$ and ${d'}$ (in {\em Table 1}) respectively. Obviously ${q_{ik}}\neq q'$. 
	\par Let ${d_1,d_2,\cdots,d_u}$ are all the zeros of ${P'(z)}$ such that ${d_j\in C_i\setminus B_i(H_2)}$ and ${q_j\in C_i'\setminus A_i(H_2)}$, where ${j=1,2,\cdots,u}$.  That is \begin{eqnarray}
		\nonumber C_i&=&\{d_{i1},d_{i2},\cdots,d_{im_i},d_1,\cdots,d_u\}.
	\end{eqnarray}
	Therefore, \begin{eqnarray}\label{xs12}
		P(z)-P(d_{ik})&=& \left(\prod_{j=1}^{m_i}(z-d_{ij})^{q_{ij}+1} \right) \left(\prod_{j=1}^{u}(z-d_j)^{q_j+1}\right)Q(z),
	\end{eqnarray}
	where ${Q(z)}$ is a polynomial of degree ${\geq 0}$ and ${S\cap \{z:Q(z)=0\}=\phi}$. Again, from the definition of sets ${A_i(H_2)}$ and ${B_i(H_2)}$, we see that \begin{eqnarray}\label{xss*}
		q_j&<&\min\{q_{i1},\cdots,q_{im_i}\}.
	\end{eqnarray}
	Now, ${P(f)-P(d_{ij})=P(g)-P(d_{ij})}$ implies
	\begin{eqnarray}\label{xss5}
		&& \left(\prod_{j=1}^{m_i}(f-d_{ij})^{q_{ij}+1} \right) \left(\prod_{j=1}^{u}(f-d_j)^{q_j+1}\right)Q(f)\\\nonumber=&&  \left(\prod_{j=1}^{m_i}(g-d_{ij})^{q_{ij}+1} \right) \left(\prod_{j=1}^{u}(g-d_j)^{q_j+1}\right)Q(g).
	\end{eqnarray}
	\par {\textbf{Case-1:}} Let ${q_{ik}<q'}$. We shall show that ${\alpha \geq 3}$. On the contrary,  let us assume that ${\alpha = 2}$. Now  from \eqref{xss5}, we see that \begin{eqnarray}
		\nonumber\alpha (q_{ik}+1)&=&\beta (q'+1)\\\nonumber \text{i.e., }\quad 2(q_{ik}+1) &=& \beta(q'+1)\\\nonumber &>& \beta (q_{ik}+1),
	\end{eqnarray}
	which is a contradiction as ${\beta \geq 2}$. Hence we must have ${\alpha \geq 3}$, i.e., ${f''(z_0)=0}$. This proves (I).
	\par {\textbf{Case-2:}} Let ${q_{ik}>q'}$. We shall show that ${\beta \geq 3}$. On the contrary, let us assume that ${\beta =2}$. Now from \eqref{xss5}, we get \begin{eqnarray}
		\nonumber \alpha (q_{ik}+1)&=&\beta (q'+1)\\\nonumber \text{i.e., }\quad \alpha (q_{ik}+1) &=& 2(q'+1)\\\nonumber &<& 2(q_{ik}+1),
	\end{eqnarray}
	which is a contradiction as ${\alpha \geq 2}$. Therefore, we must have ${\beta \geq 3}$, i.e., ${g''(z_0)=0}$. This proves (II).
\end{proof}
\begin{lem}\label{lem6}
	Let ${f}$ and ${g}$ be two non-constant meromorphic functions. For a polynomial $
	{P(z)}$, let $P(f)=P(g)$ and  ${t\geq  3}$. Therefore, there are at least three ${B_i(H_2)}$'s are non empty; say ${B_1(H_2),B_2(H_2),B_3(H_2)}$ are non empty.  Let ${q_i= \max  \{q:q\in A_i(H_2)\}}$. Let ${d_i(\in B_i(H_2))}$ corresponds to ${q_i}$ ${(\in A_i(H_2))}$, for ${i=1,2,3}$. Define \begin{eqnarray}\label{Psi}
		\Psi&=&\dfrac{f'g'(f-g)^2}{(f-d_1)(f-d_2)(f-d_3)(g-d_1)(g-d_2)(g-d_3)}.
	\end{eqnarray} 
	Then \begin{itemize}
		\item [(I)]   ${\Psi}$ is an entire function.
		\item[(II)] Let ${q_j\geq  2}$ for some $j\in \{1,2,3\}$.   Then \begin{eqnarray}
            \nonumber \overline{N}(r,d_j;f|g'=0,g\neq d_j)=S(r,f)\,\, and \,\,\overline{N}(r,d_j;g|f'=0,f\neq d_j)=S(r,f).
        \end{eqnarray}
        

	\end{itemize}
\end{lem}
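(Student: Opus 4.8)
The plan is to prove (I) by a purely local argument showing that the a priori meromorphic $\Psi$ has no poles, and to prove (II) by first reducing, through the preceding lemmas and the hypothesis $q_j\ge 2$, to a statement about the ramification of $g$ over the fixed value $d_j$, and then invoking a logarithmic--derivative estimate.

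\textbf{Part (I).} Since $\Psi$ is meromorphic, it suffices to check that it has no poles. A pole can occur only (a) at a pole of $f$, or (b) at a point $z_0$ with $f(z_0)=d_i$ or $g(z_0)=d_i$ for some $i\in\{1,2,3\}$. For (a), $P(f)=P(g)$ forces $f$ and $g$ to have a common pole of the same order $m$ at $z_0$; then the numerator of $\Psi$ has a pole of order at most $(m+1)+(m+1)+2m=4m+2$, while the denominator has a pole of order $6m$, so $\Psi$ vanishes to order at least $2m-2\ge 0$. For (b), by the symmetry of $\Psi$ in $f\leftrightarrow g$ and in $d_1,d_2,d_3$, I may assume $f(z_0)=d_1$; since distinct columns carry distinct $P$-values and $d_1,d_2,d_3$ lie in different columns, $g(z_0)\notin\{d_2,d_3\}$, so only $(f-d_1)$ and possibly $(g-d_1)$ vanish. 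If $g(z_0)=d_1$, then \emph{Lemma \ref{lem1}} gives equal multiplicities $\ell$ for $f$ and $g$, and the order count (numerator $\ge(\ell-1)+(\ell-1)+2$, denominator $2\ell$) shows $\Psi$ is holomorphic; if $g(z_0)\ne d_1$, then \emph{Lemma \ref{lem4}} forces $g'(z_0)=0$, so $g$ attains its value with multiplicity $\beta\ge2$, and with only $(f-d_1)$ vanishing (to order $\alpha$) the numerator vanishes to order $(\alpha-1)+(\beta-1)\ge\alpha$, killing the pole. Hence $\Psi$ is entire.

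\textbf{Part (II), reduction.} By \emph{Lemma \ref{lem4}}, a $d_j$-point of $f$ with $g'\ne0$ is automatically a $d_j$-point of $g$; thus the points counted by $\overline N(r,d_j;f\mid g'=0,\,g\ne d_j)$ are precisely the $d_j$-points of $f$ that are \emph{not} shared with $g$. At such a point $z_0$ put $g(z_0)=b\ne d_j$; writing $P(z)-P(d_j)=(z-d_j)^{q_j+1}H(z)$ with $H(d_j)\ne0$ (this is where $q_j\ge2$ enters), and comparing vanishing orders in $(f-d_j)^{q_j+1}H(f)=(g-d_j)^{q_j+1}H(g)$, one sees that $g$ must attain $b$ with high multiplicity. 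When $f'(z_0)\ne0$, \emph{Lemma \ref{e2}} gives $b\notin S$, so $g$ ramifies over $b$ with multiplicity $q_j+1\ge3$, i.e.\ $z_0$ is a zero of $g'$ of order $\ge 2$ lying over the fixed value $d_j$ of $f$; the ramified case $f'(z_0)=0$ is controlled by \emph{Lemma \ref{e4}}, up to an exceptional family of multiple $d_j$-points of $f$ that must be absorbed separately.

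\textbf{Part (II), global bound and the main obstacle.} The natural device for the count is $\Lambda_j=\dfrac{f'}{f-d_j}-\dfrac{g'}{g-d_j}$, for which $m(r,\Lambda_j)=S(r,f)$ by the logarithmic--derivative lemma, and whose poles are exactly the points where precisely one of $f,g$ equals $d_j$: at the shared $d_j$-points the two terms cancel (equal multiplicities, by \emph{Lemma \ref{lem1}}), so the unshared $d_j$-points of $f$ appear as simple poles of $\Lambda_j$, giving $\overline N(r,d_j;f\mid g\ne d_j)\le N(r,\Lambda_j)$. The entire difficulty is now to show $N(r,\Lambda_j)=S(r,f)$, equivalently that $\Lambda_j$ is a small function. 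This is the main obstacle, and it is here rather than in the local analysis that $q_j\ge2$ must be turned into a genuinely global statement: the heavy ramification of $g$ produced in the reduction, combined with the cancellation of the ramification shared by $f$ and $g$, has to be fed into the Second Main Theorem (or into an estimate for $\frac{g''}{g'}-\frac{f''}{f'}$, which has $m(r,\cdot)=S(r,f)$ and whose uncancelled poles are exactly the exceptional points) to preclude accumulation of these points. The symmetric estimate for $g$ follows by interchanging $f$ and $g$.
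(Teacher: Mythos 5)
Your Part (I) is correct and is essentially the paper's own argument: poles of $f,g$ are handled by the CM-sharing of $\infty$ coming from $P(f)=P(g)$, and the points lying over $d_1,d_2,d_3$ are handled by Lemma \ref{lem1} (equal multiplicities at shared points) and Lemma \ref{lem4} (unshared points force $g'=0$), with the same order counts. The local reduction opening your Part (II) is also sound and matches the paper's two cases in substance. The genuine gap is the global step of Part (II). The device $\Lambda_j=\dfrac{f'}{f-d_j}-\dfrac{g'}{g-d_j}$ buys you nothing: by Lemma \ref{lem1}, Lemma \ref{lem4} and the CM-sharing of poles, the poles of $\Lambda_j$ are precisely the unshared $d_j$-points of $f$ and of $g$ (all simple), so the statement $N(r,\Lambda_j)=S(r,f)$ is not an intermediate target but a verbatim restatement of the two estimates the lemma asserts. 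Your proposal is therefore circular exactly at the point you label ``the main obstacle,'' and the concluding appeal to the Second Main Theorem is never executed --- nor is it clear how it could be, since an SMT inequality bounds aggregate counting functions and does not by itself isolate the particular subfamily of ramified points you need to control.

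The idea you are missing --- and it is the whole content of the paper's proof of (II) --- is that you already possess the required small function: it is $\Psi$ itself. Since (I) shows $\Psi$ is entire, $T(r,\Psi)=m(r,\Psi)$; writing $(f-g)^2=\left((f-d_1)-(g-d_1)\right)\left((f-d_2)-(g-d_2)\right)$ and expanding, $\Psi$ becomes a sum of four products in which each factor is, after partial fractions, a linear combination of the logarithmic derivatives $\frac{f'}{f-d_i}$, $\frac{g'}{g-d_i}$; hence $m(r,\Psi)=S(r,f)+S(r,g)=S(r,f)$ by the lemma on the logarithmic derivative together with \eqref{fri1}. Now let $z_0$ be a point counted in $\overline{N}(r,d_j;f|g'=0,g\neq d_j)$, with $f(z_0)=d_j$ of multiplicity $\alpha$. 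The paper shows that $g'$ vanishes at $z_0$ to order at least $2$: when $g(z_0)\notin S$ this follows by differentiating $P(f)=P(g)$ twice, since $q_j\geq 2$ gives $P'(d_j)=P''(d_j)=0$ while $P'(g(z_0))\neq 0$, so $g''(z_0)=0$; when $g(z_0)\in S$ one first gets $\alpha\geq 2$ (the multiplicity identity $\alpha(q_j+1)=\beta(q_d+1)$, where $q_d$ is the multiplicity of $g(z_0)$ in $P'$ and $\beta\geq 2$, would otherwise force $q_j+1\geq 2(q_d+1)$, against the defining inequality of $A_j(H_2)$), and then Lemma \ref{e4} yields $g''(z_0)=0$. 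Since $g(z_0)\notin\{d_1,d_2,d_3\}$, the denominator of $\Psi$ vanishes at $z_0$ only to order $\alpha$, while the numerator vanishes to order at least $(\alpha-1)+2$; hence $\Psi(z_0)=0$. Therefore $\overline{N}(r,d_j;f|g'=0,g\neq d_j)\leq\overline{N}(r,0;\Psi)\leq T(r,\Psi)+O(1)=S(r,f)$, unless $\Psi\equiv 0$, in which case $f\equiv g$ and the counting function vanishes identically; the estimate for $g$ is symmetric. This single observation --- count the exceptional points as zeros of the entire function $\Psi$ rather than as poles of $\Lambda_j$ --- is what closes the gap your proposal leaves open.
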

\begin{proof}
	{\em \underline{Proof of (I).}}  Let ${f(z_0)=d_1}$ (or ${d_2}$ or ${d_3}$). Now \begin{itemize}
		\item [(i)] If ${g'(z_0)\neq 0}$, then form {\em Lemma \ref{lem4}}, we see that ${g(z_0)=d_1}$(or ${d_2}$ or ${d_3}$) and hence ${z_0}$ is not a pole of ${\Psi}$.
		\item [(ii)] If ${g'(z_0)}=0$ then  \begin{itemize}
			\item [(a)] if ${g(z_0)=d_1}$(or ${d_2}$ or ${d_3}$) then ${g(z_0)}$ is not a pole of ${\Psi}$; 
			\item [(b)] if ${g(z_0)\neq d_1}$(or ${d_2}$ or ${d_3}$), then ${g(z_0)\neq d_1,d_2,d_3}$ (see the paragraph just after {\em Table-1}). Therefore, ${z_0}$ is not a pole of ${\Psi}$.
		\end{itemize}
	\end{itemize}
	Now, if we consider $g(z_0)=d_1$ ($d_2$ or $d_3$), then by the similar arguments, we see that $z_0$ is not a pole of $\Psi$.
	Again, as ${P(f)=P(g)}$, so ${f,g}$ share ${ \infty }$ CM and hence poles of ${f,g}$ are not the poles of ${\Psi}$. Thus we find that ${\Psi}$ has no pole. Therefore, ${\Psi}$ is an entire function.
				\vspace{0.1in}\\{\em \underline{Proof of (II).}} Without loss of generality, we assume ${q_j=q_1\geq 2}$. Let ${f(z_0)=d_1}$ of multiplicity ${\alpha (\geq 1) }$; ${g'(z_0)=0}$; ${g(z_0)\neq d_1}$. Now, we have two possibilities.\par {\underline{\textbf{Case-1:}}} Let ${g(z_0)\not \in S}$. Then ${P'(g(z_0))\neq 0}$. Now, as ${q_1\geq  2}$, we have \begin{eqnarray}
					\nonumber\left. \dfrac{d^2}{dz^2}P(f)\right|_{z=z_0}&=&\left. \dfrac{d^2}{dz^2}P(g)\right|_{z=z_0}\\\nonumber \Longrightarrow f''(z_0)P'(d_1)+f'^2(z_0)P''(d_1)&=&g''(z_0)P'(g(z_0))+g'^2(z_0)P''(g(z_0))\\\nonumber \Longrightarrow 0&=&g''(z_0)P'(g(z_0))\\\nonumber \Longrightarrow 0&=&g''(z_0),
				\end{eqnarray}
				i.e., ${z_0}$ is a zero of ${g'}$ of multiplicity at least two.
				
				\par {\underline{\textbf{Case-2:}}} Let ${g(z_0) \in S}$. Let ${g(z_0)=d}(\in S)$ of multiplicity ${\beta (\geq 2) }$, for some ${d\neq d_1}$. Let ${q_1,q_d}$ (in {\em Table 2}) corresponds to ${d_1,d}$  (in {\em Table 1}) respectively.  Then as ${d_1\in C_1}$, we must have  ${d\in C_1\setminus \{d_1\}}$. Again, as ${P(d_1)=P(d)}$, we see that ${(z-d_1)^{q_1+1}(z-d)^{q_d+1}}$ is a factor of ${P(z)-P(d_1)}$. Therefore, we must have \begin{eqnarray}
					\nonumber P(z)-P(d_1)&=&(z-d_1)^{q_1+1}(z-d)^{q_d+1}Q(z),
				\end{eqnarray}
				where ${Q(z)}$ is a polynomial with ${Q(d),Q(d_1)\neq 0}$. Now from ${P(f)=P(g)}$, we have \begin{eqnarray}\label{xs17}
					P(f)-P(d_1)&=&P(g)-P(d_1)\\\nonumber \text{i.e.,}\quad (f-d_1)^{q_1+1}(f-d)^{q_d+1}Q(f) &=& (g-d_1)^{q_1+1}(g-d)^{q_d+1}Q(g).
				\end{eqnarray}
            Comparing the powers of $(z-z_0)$ on both sides of  \eqref{xs17}, we have 	
                 \begin{eqnarray}\label{xs18}
					\alpha (q_1+1)&=&\beta (q_d+1).
				\end{eqnarray}
				Here ${q_1>q_d}$, ${\alpha \geq 1}$ and ${\beta \geq 2}$. We claim that ${\alpha \geq 2}$. If possible, let ${\alpha =1}$. Then from \eqref{xs18}, we get\begin{eqnarray}
					\nonumber q_1+1&=&\beta (q_d+1)\\\nonumber &\geq & 2(q_d+1),
				\end{eqnarray}
				which is a contradiction as both ${d,d_1\in B_1(H_2)}$ and ${q_1=\max \{q:q\in A_1(H_2)\}}$. Hence we must have ${\alpha \geq 2}$, i.e., ${f'(z_0)=0}$. Thus we finally have ${f(z_0)=d_1 }$,  ${f'(z_0)=0}$, ${g'(z_0)=0,}$ ${g(z_0)\neq d_1}$ and ${q_d<q_1}$. Now by using {\em Lemma \ref{e4}-(II)}, we see that ${g''(z_0)=0}$.
				\par\vspace{0.1in} Thus, in any case, we see that ${z_0}$ is a zero of ${g'}$ of multiplicity at least ${2}$. Hence ${\Psi(z_0)=0}$. So, \begin{eqnarray}
					\nonumber \overline{N}(r,d_1;f|g'=0,g\neq d_1)&\leq  &  \overline{N}(r,0,\Psi)\\\nonumber&\leq  &T(r,\Psi)\\\nonumber&=&S(r,f).
				\end{eqnarray}
				Similarly, \begin{eqnarray}
					\nonumber \overline{N}(r,d_1;g|f'=0,f\neq d_1)&= & S(r,g).
				\end{eqnarray}
				\par This completes the proof of the lemma.
			\end{proof}


	   \section*{\Large {Proof of Theorem \ref{t1} and Theorem \ref{t2}}}
	    At first we prove {\em Theorem \ref{t1}}.
	   \par Let us consider the following two cases. \par \textbf{Case-1:} Let ${\Phi=0}$. Then ${f=g}$. \vspace{0.1in} \par \textbf{Case-2:} Let ${\Phi\neq 0}$. Using {\em Lemma \ref{lem4}}, we get \begin{eqnarray}
	    	\nonumber  \sum\limits_{d\in \left({\cup_{i}^{} B_i(H_2)}\right)} \overline{N}(r,d;f)+ \overline{N}(r, \infty;f)&=& \sum\limits_{d\in \left({\cup_{i}^{} B_i(H_2)}\right)} \overline{N}(r,d;f|g'\neq 0)\\\nonumber&&+\sum\limits_{d\in \left({\cup_{i}^{} B_i(H_2)}\right)} \overline{N}(r,d;f|g'= 0)+ \overline{N}(r, \infty;f)\\\nonumber&\leq&   \overline{N}(r, 0;\Phi)+ \overline{N}(r,0;g')\\\nonumber&\leq  &\left({T(r,f)+T(r,g)}\right)+2T(r,g)+O(1)\\\nonumber&\leq  &4T(r,f)+O(1).
	    \end{eqnarray}
	    Now, using second fundamental theorem, we have \begin{eqnarray}
	    	\nonumber (t'-1)T(r,f)&\leq  & \sum\limits_{d\in \left({\cup_{i}^{} B_i(H_2)}\right)} \overline{N}(r,d;f)+ \overline{N}(r, \infty;f)- \log  r+O(1)\\\nonumber&\leq  &4T(r,f)- \log  r+O(1),
	    \end{eqnarray}
	    which is a contradiction, as ${t'\geq  5}$.\par This completes the proof of the theorem.  
If we apply the complex version of the second fundamental theorem, we get {\em Theorem \ref{t2}}.
\begin{rem}
   Let ${f,g}$ be entire functions and ${\alpha \not\in \{z:P(z)-P(d)=0\}}$, for all ${ d\in  \cup_{i}^{} B_i(H_2)}$. Let ${f(z_0)=d}$. Since ${P(f)=P(g)}$, we must have ${g(z_0)\in \{z:P(z)-P(d)=0\}}$. Therefore,
       \begin{eqnarray}\label{xvc1}
       \sum\limits_{d\in \left({\cup_{i}^{} B_i(H_2)}\right)} \overline{N}(r,d;f|g'= 0)&\leq &\overline{N}(r,0;\dfrac{g'}{g-\alpha } )\\\nonumber&\leq  & T(r,\dfrac{g'}{g-\alpha })+O(1)\\\nonumber&\leq  & N(r,\dfrac{g'}{g-\alpha } )+S(r,g)\\\nonumber&\leq  & \overline{N}(r,\alpha ;g)+S(r,g)\\\nonumber&\leq  & T(r,g)+S(r,g).
   \end{eqnarray}
Incorporating \eqref{xvc1} in the proof of {\em Theorem \ref{t1}} and {\em Theorem \ref{t2}}, we arrive at a contradiction for ${t'\geq 4}$ and ${t'\geq 5}$ respectively.
\end{rem}
	   \section*{\Large {Proof of Theorem \ref{t3}}}
	We consider the following two cases.
	\par \textbf{\underline{\textbf{Case-1:}}} Let ${\Phi=0}$. Then ${f=g}$.
	\par 	\textbf{\underline{\textbf{Case-2:}}} Let ${\Phi\neq 0}$. Then ${f\neq g}$.
	Since ${t\geq  3}$, let us assume that ${B_1(H_2),B_2(H_2),B_3(H_2)}$ are non empty. Let ${d_i\in B_i(H_2)}$ such that ${q_i= \max  \{q:q\in A_i(H_2)\}}$ for ${i=1,2,3}$, where ${q_i}$ in {\em Table 2} is the corresponding element of ${d_i}$ in {\em Table 1}.
	 Let ${ \overline{N}_0(r,0;g')}$ be the reduced counting function of those zeros of ${g'}$, which are not the zeros of ${g-d_i}$ for ${i=1,2,3}$. ${ \overline{N}_0(r,0;f')}$ is similarly defined. Then \begin{eqnarray}\label{ee1}
		 \sum\limits_{j=1}^{3} \overline{N}(r,d_i;f|g'= 0,g\neq d_i)&\leq  &  \overline{N}_0(r,0;g')
	\end{eqnarray}
and 
\begin{eqnarray}\label{ee11}
	\sum\limits_{j=1}^{3} \overline{N}(r,d_i;g|f'= 0,f\neq d_i)&\leq  &  \overline{N}_0(r,0;f').
\end{eqnarray}
	 Now, using {\em Lemma \ref{lem4}} and \eqref{ee1} in the second fundamental theorem, we have \begin{eqnarray}\label{ee2}
	 	 2T(r,f)&\leq  & \sum\limits_{j=1}^{3} \overline{N}(r,d_i;f)+ \overline{N}(r, \infty;f)-N(r,0;f')- \log  r+O(1)\\\nonumber&\leq  &   \sum\limits_{j=1}^{3}\overline{N}(r,d_i;f|g'\neq 0)+  \sum\limits_{j=1}^{3}\overline{N}(r,d_i;f|g'=0,g=d_i) \\\nonumber&&+\sum\limits_{j=1}^{3} \overline{N}(r,d_i;f|g'=0;g\neq d_i)+ \overline{N}(r, \infty;f)-N_0(r,0;f')\\\nonumber&&- \log  r+O(1)\\\nonumber&\leq  & \overline{N}(r,0;\Phi)+ \overline{N}_0(r,0;g')-N_0(r,0;f')- \log  r+O(1).
	 \end{eqnarray}
	 Similarly, \begin{eqnarray}\label{ee3}
	 	2T(r,g)&\leq  & \overline{N}(r,0;\Phi)+ \overline{N}_0(r,0;f')-N_0(r,0;g')- \log  r+O(1).
	 \end{eqnarray}
	  Now, combining \eqref{ee2} and \eqref{ee3}, we get \begin{eqnarray}
	  	\nonumber 2\left[{T(r,f)+T(r,g)}\right]&\leq  &2 \overline{N}(r,0;\Phi)-2 \log  r+O(1)\\\nonumber&\leq  &2\left[{T(r,f)+T(r,g)}\right]-2 \log  r+O(1),
	  \end{eqnarray}
	  which is a contradiction.
	  \par This completes the proof of the theorem.
	   \section*{\Large {Proof of Theorem \ref{t4}}}
 If ${t\geq  4}$, the proof is almost similar to the proof of {\em Theorem \ref{t3}}. Therefore, we prove the theorem only when ${t=3}$.\par 
	Since ${t=  3}$, let us assume ${B_1(H_2),B_2(H_2),B_3(H_2)}$ be non empty. Let ${d_i\in B_i(H_2)}$ such that ${q_i=\ \max  \{q:q\in A_i(H_2)\}}$ for ${i=1,2,3}$, where ${q_i}$ in {\em Table 2} corresponds to ${d_i}$ in {\em Table 1}. Now we consider the following two cases. 
	\vspace{0.1in}\\{\underline{\textbf{Case-1:}}} Let ${\Phi=0}$. Then ${f=g}$.
	\vspace{0.1in}\\{\underline{\textbf{Case-2:}}} Let ${\Phi\neq 0}$. Then ${f\neq g}$.
 From {\em Lemma \ref{lem6}}, we see that the function ${\Psi}$ for the points ${d_1,d_2,d_3}$, i.e., \begin{eqnarray}
 	\nonumber \Psi&=& \dfrac{f'g'(f-g)^2}{(f-d_1)(f-d_2)(f-d_3)(g-d_1)(g-d_2)(g-d_3)}
 \end{eqnarray}
 is an entire function, with \begin{eqnarray}\label{e5}
 	T(r,\Psi)&=&S(r,f).
 \end{eqnarray}
Since ${t'\geq  4}$, there is a ${d(\neq d_1,d_2,d_3)\in B_i(H_2)}$ for some ${i=1,2,3}$. Without loss of generality, let us assume ${d\in B_1(H_2)}$. We may choose ${d\in B_1(H_2)}$ in such a way that ${q_1>q_d}$ but $q_d\geq q_\beta$
 for any ${q_ \beta \in A_1(H_2)}$, unless otherwise ${q_ \beta =q_1}$.
 Now \begin{eqnarray}
 	\nonumber  &&\overline{N}(r,d;f|g'=0)\\\nonumber=&& \overline{N}(r,d;f|g'=0;g=d)+ \overline{N}(r,d;f|g'=0;g\neq d)\\\nonumber=&&N(1)+\overline{N}(r,d;f|g'=0;g\neq d;f'\neq 0)+\overline{N}(r,d;f|g'=0;g\neq d;f'= 0)\\\nonumber=&&N(1)+N(2)+\overline{N}(r,d;f|g'=0;g\neq d;f'= 0;g\not\in B(H_2))+\overline{N}(r,d;f|g'=0;g\neq d;f'= 0;g\in B(H_2))\\\nonumber=&&N(1)+N(2)+N(3)+N(4), 
 \end{eqnarray}
 where \begin{itemize}
 	\item ${N(1)= \overline{N}(r,d;f|g'=0,g=d)}$; \item ${N(2)= \overline{N}(r,d;f|g'=0;g\neq d;f'\neq 0)}$; \item ${N(3)=\overline{N}(r,d;f|g'=0;g\neq d;f'= 0;g\not \in B(H_2))}$; \item ${N(4)=\overline{N}(r,d;f|g'=0;g\neq d;f'= 0;g \in B(H_2))}$.
 \end{itemize}
Again, \begin{eqnarray}\label{ee8}
	 \nonumber&&\overline{N}(r,d_i;f|g'=0;g\neq d_i)\\\nonumber=&& \overline{N}(r,d_i;f|g'=0;g\neq d_i;f'\neq 0)+\overline{N}(r,d_i;f|g'=0;g\neq d_i;f'=0)\\\nonumber=&& N(5)+\overline{N}(r,d_i;f|g'=0;g\neq d_i;f'=0;g\in B(H_2))+\overline{N}(r,d_i;f|g'=0;g\neq d_i;f'=0;g\not\in B(H_2))\\\nonumber=&&N(5)+N(6)+N(7),
\end{eqnarray}
where
\begin{itemize}
	\item $N(5)={\overline{N}(r,d_i;f|g'=0;g\neq d_i;f'\neq 0)}$ \item $N(6)={\overline{N}(r,d_i;f|g'=0;g\neq d_i;f'=0;g\in B(H_2))}$ \item ${N(7)=\overline{N}(r,d_i;f|g'=0;g\neq d_i;f'=0;g\not\in B(H_2))}$
\end{itemize}
Observe that the points, whose counting function is \begin{itemize}
	\item ${N(1)}$, are zeros of ${\Phi}$.
	\item ${N(2),}$ are zeros of ${g'}$, which are not the zeros of ${g-d_ \alpha }$ for all ${d_ \alpha \in B(H_2)}$ (by using {\em Lemma \ref{e2}}). \item ${N(3)}$, are zeros of ${g'}$, which are not the zeros of ${g-d_ \alpha }$ for all ${d_ \alpha \in B(H_2)}$.
	\item ${N(4)}$,  are  zeros of ${\Psi(z)}$. \par {\underline{{\em Explanation:}}} Let ${f(z_0)=d}$; ${g'(z_0)=0}$; ${g(z_0)\neq d}$; ${f'(z_0)=0}$; ${g(z_0)\in B(H_2)}$. Then as ${d\in C_1}$, we must have ${g(z_0)\in C_1}$.\par If ${g(z_0)=d_1}$, then as ${q_d<q_1}$, from {\em Lemma \ref{e4}-(I)}, we have ${f''(z_0)=0}$, i.e., ${z_0}$ is a zero of  ${f'}$ of multiplicity at least two. Then ${\Psi(z_0)=0}$.  \par If ${g(z_0)\neq d,d_1}$, then ${g(z_0)\neq d_1,d_2,d_3}$ and hence ${\Psi(z_0)=0}$. 
	\item ${N(5)}$, are zeros of ${g'}$, which are not the zeros of ${g-d_ \alpha }$ for all ${d_ \alpha \in B(H_2)}$(by using {\em Lemma \ref{e2}}).
	\item ${N(6)}$, are zeros of ${\Psi(z)}$. 
	\par {\underline{{\em Explanation:}}}  Let ${f(z_0)=d_i}$; with ${g'(z_0)=0}$; ${g(z_0)\in B(H_2)}$; ${f'(z_0)=0}$; ${g(z_0)\neq d_i}$; ${i=1,2,3}$. Let ${g(z_0)=d_i'}$ for some ${d_i'\in C_i}$; with ${d_i'\neq d_i}$; ${i=1,2,3}$. Let ${q_i'}$ be the corresponding element of ${d_i'}$. Clearly, ${q_i>q_i'}$. Therefore, proceeding similarly like {\em Lemma \ref{e4}(Case-1)}, we get ${g''(z_0)=0}$; i.e., ${z_0}$ is a zero of ${g'}$ of multiplicity at least two. Thus ${\Psi (z_0)=0}$.
	\item ${N(7)}$, are zeros of ${g'}$, which are not the zeros of ${g-d_ \alpha }$ for all ${d_ \alpha \in B(H_2)}$. 
\end{itemize}
Now, using {\em Lemma \ref{lem4}} \begin{eqnarray}
	\nonumber   \sum\limits_{i=1}^{3} \overline{N}(r,d_i;f)+\overline{N}(r,d;f)+ \overline{N}(r, \infty;f)&\leq  & \sum\limits_{i=1}^{3} \overline{N}(r,d_i;f|g'\neq 0)+\sum\limits_{i=1}^{3} \overline{N}(r,d_i;f|g'=0)\\\nonumber&&+ \overline{N}(r,d;f|g'=0)+ \overline{N}(r,d;f|g'\neq 0)+ \overline{N}(r, \infty;f)\\\nonumber&=& \sum\limits_{i=1}^{3} \overline{N}(r,d_i;f|g'\neq 0)+ \sum\limits_{i=1}^{3} \overline{N}(r,d_i;f|g'=0;g=d_i)\\\nonumber&&+\sum\limits_{i=1}^{3} \overline{N}(r,d_i;f|g'=0;g\neq d_i)+ \overline{N}(r,d;f|g'=0)\\\nonumber&&+ \overline{N}(r,d;f|g'\neq 0)+ \overline{N}(r, \infty;f)\\\nonumber&\leq  & \sum\limits_{i=1}^{3} \overline{N}(r,d_i;f|g'\neq 0)+ \sum\limits_{i=1}^{3} \overline{N}(r,d_i;f|g'=0;g=d_i)\\\nonumber&&+\left[{N(5)+N(6)+N(7)}\right]+ \left[{N(1)+N(2)+N(3)+N(4)}\right]\\\nonumber&&+  \overline{N}(r,d;f|g'\neq 0)+ \overline{N}(r, \infty;f)\\\nonumber&\leq  & \overline{N}(r,0;\Phi)+ \overline{N}(r,0;g'|g\not\in B(H_2))+ \overline{N}(r,0;\Psi)\\\nonumber&\leq  & \left[{T(r,f)+T(r,g)}\right]+ \overline{N}_0(r,0;g')+S(r,f).
\end{eqnarray}
Now by the second fundamental theorem, we have \begin{eqnarray}\label{z1xz}
	 3T(r,f)&\leq  & \sum\limits_{i=1}^{3} \overline{N}(r,d_i;f)+ \overline{N}(r,d;f)+ \overline{N}(r, \infty;f)-N_0(r,0;f')+S(r,f)\\\nonumber&\leq  &\left[{T(r,f)+T(r,g)}\right]+ \overline{N}_0(r,0;g')-N_0(r,0;f')+S(r,f).
\end{eqnarray}
Similarly, \begin{eqnarray}\label{z2xz}
	  3T(r,g)&\leq  &\left[{T(r,f)+T(r,g)}\right]+ \overline{N}_0(r,0;f')-N_0(r,0;g')+S(r,g).
\end{eqnarray}
Combining \eqref{z1xz} and \eqref{z2xz}, we have \begin{eqnarray}
	\nonumber 3\left[{T(r,f)+T(r,g)}\right]&\leq  &2\left[{T(r,f)+T(r,g)}\right]+S(r,f)+S(r,g),
\end{eqnarray}
which is a contradiction.
\par This completes the proof of the theorem.
	   \section*{\Large {Proof of Theorem \ref{t7}}}
If ${\min (q_1,q_2,q_3)\geq  2}$, then from {\em Lemma \ref{lem1}, Lemma \ref{lem4}} and {\em  Lemma \ref{lem6}}, we see that ${f,g}$ share ${d_1,d_2,d_3}$ \textbf{``CM"}. Again, ${P(f)=P(g)}$ implies that ${f,g}$ share ${ \infty }$ CM. Hence by the four value theorem \cite{nevanlinna}, we get ${g=T(f)}$ for some Mobius Transformation ${T}$.\par However, if ${\max  (q_1,q_2,q_3)\geq  2}$, then under the hypothesis assumed in the theorem, we shall show that same conclusion holds; i.e., ${g=T(f)}$.
\par\vspace{0.1in} Since ${ \max(q_1,q_2,q_3)\geq  2}$, without loss of generality, let us assume ${q_1\geq  2}$. Let \begin{eqnarray}
	\nonumber \Psi&=&\dfrac{f'g'(f-g)^2}{(f-d_1)(f-d_2)(f-d_3)(g-d_1)(g-d_2)(g-d_3)};\\\nonumber \gamma&=  &\dfrac{f'}{f-d_1}-\dfrac{g'}{g-d_1}.
\end{eqnarray}
Using {\em Lemma \ref{lem6}}, we have 
\begin{eqnarray}\label{x9}
	\overline{N}(r,d_1;f|g'=0,g\neq d_1)&=& S(r,f)
\end{eqnarray}
and \begin{eqnarray}\label{x11}
	\overline{N}(r,d_1;g|f'=0,f\neq d_1)&=& S(r,g),
\end{eqnarray}
i.e., ${f,g}$ share ${d_1}$ \textbf{``CM"}.
Therefore, using {\em Lemma \ref{lem4}}, we see that \begin{eqnarray}\label{x25}
	\overline{N}(r,d_1;f)&=& \overline{N}(r,d_1;g)+S(r,f).
\end{eqnarray}
Again, by using {\em Lemma \ref{lem4}}, {\em Lemma \ref{lem1}}, \eqref{x9} and \eqref{x11}, we have \begin{eqnarray}\label{x10}
	T(r, \gamma  )&=&N(r, \gamma  )+m(r, \gamma  )\\\nonumber&=& \overline{N}(r,d_1;f|g'=0,g\neq d_1)+\overline{N}(r,d_1;g|f'=0,f\neq d_1)\\\nonumber&&+S(r,f)+S(r,g)\\\nonumber&=&S(r,f).
\end{eqnarray}
Again, from {\em Lemma \ref{lem6}}, we see that \begin{eqnarray}\label{x12}
	T(r,\Psi)&=&S(r,f).
\end{eqnarray}
Let ${A_i,B_i\in \mathbb{C}}$ be constants with ${A_1,B_1\neq 0}$, and \begin{eqnarray}\label{x1}
	f(z)&=&d_2+(z-z_0)A_1+(z-z_0)^2A_2+ \dots \\g(z)&=&d_2+(z-z_0)B_1+(z-z_0)^2B_2+ \dots.
\end{eqnarray}
Then \begin{eqnarray}\label{x3}
	\Psi(z_0)&=&\dfrac{(A_1-B_1)^2}{(d_1-d_2)^2(d_2-d_3)^2},
\end{eqnarray}
and \begin{eqnarray}\label{x4}
	\gamma(z_0) &=&\dfrac{A_1-B_1}{d_2-d_1}. 
\end{eqnarray}
Now from \eqref{x3} and \eqref{x4}, we see that \begin{eqnarray}\label{x5}
	\left({\gamma (z_0)}\right)^2&=&(d_2-d_3)^2\Psi(z_0).
\end{eqnarray}
Similarly, if we assume \begin{eqnarray}
	\nonumber f(z)&=&d_3+(z-z_0)A_1+(z-z_0)^2A_2+ \dots \\\nonumber  g(z)&=&d_3+(z-z_0)B_1+(z-z_0)^2B_2+ \dots,
\end{eqnarray}
still we get \eqref{x5}.
Now, we have the following two cases to consider.\par {\underline{\textbf{Case-1:}}} Let \begin{eqnarray}\label{x6}
	 \gamma  ^2&\neq& (d_2-d_3)^2\Psi.
\end{eqnarray}
Now, using {\em Lemma \ref{lem1}}, we have from \eqref{x5} \begin{eqnarray}\label{x13}
	&&\overline{N}(r,d_2;f|f'\neq0, g=d_2)+\overline{N}(r,d_3;f|f'\neq0, g=d_3)\\\nonumber&=&\overline{N}(r,d_2;f|f'\neq0, g=d_2,g'\neq 0)+\overline{N}(r,d_3;f|f'\neq0, g=d_3,g'\neq 0)\\\nonumber&\leq  & \overline{N}(r,0; \gamma  ^2-(d_2-d_3)^2\Psi )\\\nonumber&\leq  &2T(r, \gamma  )+T(r,\Psi )\\\nonumber&=&S(r,f).
\end{eqnarray}
Again, by using {\em Lemma \ref{lem1}} \begin{eqnarray}\label{x14}
	\overline{N}(r, d_2;f|g=d_2,f'=0)&=  &  \overline{N}(r,d_2;f|g=d_2,f'=0,g'=0)\\\nonumber&\leq  & \overline{N}(r,0;\Psi )\\\nonumber&\leq  &T(r,\Psi)+O(1)\\\nonumber&=  &S(r,f).
\end{eqnarray}
Similarly,\begin{eqnarray}\label{x15}
	\overline{N}(r, d_3;f|g=d_3,f'=0)&=  &  \overline{N}(r,d_3;f|g=d_3,f'=0,g'=0)\\\nonumber&= & S(r,f).
\end{eqnarray}
Using \eqref{x13} and \eqref{x14}, \begin{eqnarray}\label{x16}
	\overline{N}(r,d_2,f|g=d_2)&=&S(r,f).
\end{eqnarray}
Similarly, \begin{eqnarray}\label{x17}
	\overline{N}(r,d_3,f|g=d_3)&=&S(r,f).
\end{eqnarray}
Using {\em Lemma \ref{lem4}}, we have \begin{eqnarray}\label{x18}
	&&\sum\limits_{j=2}^{3}\overline{N}(r,d_j;f|g\neq d_j)\\\nonumber&=& \sum\limits_{j=2}^{3}\overline{N}(r,d_j;f|g\neq d_j,g'=0)+\sum\limits_{j=2}^{3}\overline{N}(r,d_j;f|g\neq d_j,g'\neq 0)\\\nonumber&=&\sum\limits_{j=2}^{3}\overline{N}(r,d_j;f|g\neq d_j,g'=0).
\end{eqnarray}
Similarly, \begin{eqnarray}\label{x45}
	\sum\limits_{j=2}^{3} \overline{N}(r,d_j;g|f\neq d_j)&=&\sum\limits_{j=2}^{3}\overline{N}(r,d_j;g|f\neq d_j,f'=0).
\end{eqnarray}
Let ${ \overline{N}_0(r,0;g')}$ be the reduced counting function of those zeros of ${g'}$ which are not the zeros of ${g-d_i}$, ${i=1,2,3}$.
Therefore, from \eqref{x16}, \eqref{x17} and \eqref{x18}, we get \begin{eqnarray}\label{x19}
	\sum\limits_{i=2}^{3}\overline{N}(r,d_i;f)&\leq  &  \overline{N}_0(r,0;g')+S(r,f).
\end{eqnarray}
Similarly,  \begin{eqnarray}\label{x20}
	\sum\limits_{i=2}^{3}\overline{N}(r,d_i;g)&\leq  &  \overline{N}_0(r,0;f')+S(r,f).
\end{eqnarray}
Now, by the second fundamental theorem, we have \begin{eqnarray}\label{x21}
	2T(r,f)&\leq  & \sum\limits_{i=1}^{3} \overline{N}(r,d_i;f)+ \overline{N}(r, \infty;f)-N_0(r,0;f')+S(r,f)
\end{eqnarray}
and 
\begin{eqnarray}\label{x22}
	2T(r,g)&\leq  & \sum\limits_{i=1}^{3} \overline{N}(r,d_i;g)+ \overline{N}(r, \infty;g)-N_0(r,0;g')+S(r,g).
\end{eqnarray}
Adding \eqref{x21}, \eqref{x22} and then using \eqref{x19}, \eqref{x20} and \eqref{x25}, we get \begin{eqnarray}\label{x23}
	2\left[{T(r,f)+T(r,g)}\right]&\leq  & \overline{N}(r,d_1;f)+ \overline{N}(r,d_1;g)+ \overline{N}(r, \infty;f)\\\nonumber&&+\overline{N}(r, \infty;g)+S(r,f)\\\nonumber i.e., \qquad 4T(r,f)&\leq  &2 \overline{N}(r,d_1;f)+ 2\overline{N}(r, \infty;f)+S(r,f)\\\label{x26} i.e., \qquad 2T(r,f)&\leq  & \overline{N}(r,d_1;f)+ \overline{N}(r, \infty;f)+S(r,f).
\end{eqnarray}
Therefore, we must have \begin{eqnarray}\label{x27}
	\overline{N}(r,d_1;f)=T(r,f)+S(r,f) \qquad \& \qquad  \overline{N}(r, \infty;f)=T(r,f)+S(r,f)	
\end{eqnarray}
and hence from \eqref{x25}, we also have \begin{eqnarray}\label{x34}
	\overline{N}(r,d_1;g)=T(r,g)+S(r,f) \qquad \& \qquad  \overline{N}(r, \infty;g)=T(r,g)+S(r,g).
\end{eqnarray}
For any ${b_i\in \mathbb{S}}=\{z:P(z)=0\}=\{b_1,b_2, \dots,b_n\}$, let ${f(z_0)=b_i}$. Then as ${P(f)=P(g)}$, we must have $g(z_0)=b_j$ for some ${b_j\in \mathbb{S}}$.
Now we define \begin{eqnarray}\label{x31}
	\delta &=&\dfrac{f-d_1}{g-d_1}.
\end{eqnarray}
Clearly, ${\delta\not=0 }$. Since ${f,g}$ share ${ \infty }$ CM and ${d_1}$ \textbf{``CM"}, we have 
\begin{eqnarray}\label{x32}
	N(r,\delta)&=&S(r,f).
\end{eqnarray}
Again, from  \eqref{x27} and \eqref{x31}, we see that \begin{eqnarray}\label{x33}
	m(r,\delta)&\leq  & m(r,f)+m(r,d_1;g)+O(1).\\\nonumber&=&S(r,f).
\end{eqnarray}
So, \begin{eqnarray}\label{x34}
	T(r,\delta)&=&S(r,f).
\end{eqnarray}
Now \begin{eqnarray}\label{x35}
	\overline{N}(r,b_i;f)&\leq  & \sum\limits_{j=1}^{n} \overline{N}\left(r,\dfrac{b_i-d_1}{b_j-d_1};\delta\right)\\\nonumber&\leq  &nT(r,\delta)\\\nonumber&=&S(r,f).
\end{eqnarray}
Again, by the second fundamental theorem, we get
\begin{eqnarray}
	\nonumber (n-2)T(r,f)&\leq  & \sum\limits_{i=1}^{n} \overline{N}(r,b_i;f)+S(r,f)\\\nonumber&=&S(r,f),
\end{eqnarray}
which is a contradiction as ${n\geq  5}$.


\par {\underline{\textbf{Case-2:}}} Let \begin{eqnarray}\label{x46}
	 \gamma  ^2&=& (d_2-d_3)^2\Psi.
\end{eqnarray}
\begin{proposition}\label{propo1}
   $ \gamma  ^2-(d_2-d_3)^2 \Psi = \zeta _1\zeta _2$, where $\zeta _1=\dfrac{f'(f-d_3)}{(f-d_1)(f-d_2)}-\dfrac{g'(g-d_3)}{(g-d_1)(g-d_2)}$ and $\zeta _2=\dfrac{f'(f-d_2)}{(f-d_1)(f-d_3)}-\dfrac{g'(g-d_2)}{(g-d_1)(g-d_3)}$.

\end{proposition}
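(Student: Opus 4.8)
The assertion is a purely algebraic identity in $f,g,f',g'$ and the constants $d_1,d_2,d_3$; no Nevanlinna-theoretic input is required, so the plan is a direct verification after clearing denominators. First I would abbreviate $X_i=f-d_i$ and $Y_i=g-d_i$ for $i=1,2,3$, recording the single elementary fact $f-g=X_i-Y_i$ that holds for every $i$. In this notation $\gamma=\dfrac{f'}{X_1}-\dfrac{g'}{Y_1}$, while $\zeta_1=\dfrac{f'X_3}{X_1X_2}-\dfrac{g'Y_3}{Y_1Y_2}$ and $\zeta_2=\dfrac{f'X_2}{X_1X_3}-\dfrac{g'Y_2}{Y_1Y_3}$, and $\Psi=\dfrac{f'g'(f-g)^2}{X_1X_2X_3Y_1Y_2Y_3}$.

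Next I would expand the product $\zeta_1\zeta_2$. The two ``diagonal'' terms simplify to $f'^2/X_1^2$ and $g'^2/Y_1^2$, which are exactly the outer terms of $\gamma^2=f'^2/X_1^2-2f'g'/(X_1Y_1)+g'^2/Y_1^2$. Hence on forming $\gamma^2-\zeta_1\zeta_2$ the pure $f'^2$ and $g'^2$ contributions cancel and everything collapses onto the coefficient of $f'g'$, giving
\[
\gamma^2-\zeta_1\zeta_2=\frac{f'g'}{X_1Y_1}\left(\frac{X_3Y_2}{X_2Y_3}+\frac{X_2Y_3}{X_3Y_2}-2\right)=\frac{f'g'}{X_1Y_1}\cdot\frac{(X_3Y_2-X_2Y_3)^2}{X_2X_3Y_2Y_3},
\]
where the last step is just the recognition of a perfect square $p+p^{-1}-2=(p-1)^2/p$ with $p=X_3Y_2/(X_2Y_3)$.

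The crux, and the only step that is not bookkeeping, is the factorization of $X_3Y_2-X_2Y_3$. Expanding $(f-d_3)(g-d_2)-(f-d_2)(g-d_3)$, the $fg$ and constant terms cancel and one is left with $(d_3-d_2)(f-g)=-(d_2-d_3)(f-g)$, so that $(X_3Y_2-X_2Y_3)^2=(d_2-d_3)^2(f-g)^2$. Substituting this into the display, the factor $(f-g)^2$ combines with $1/(X_1X_2X_3Y_1Y_2Y_3)$ to rebuild precisely $(d_2-d_3)^2\Psi$, yielding $\gamma^2-\zeta_1\zeta_2=(d_2-d_3)^2\Psi$, which is the claimed identity after transposition. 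I do not anticipate any genuine obstacle; the two points that merely require care are the correct sign in the factorization of $X_3Y_2-X_2Y_3$, and the observation that the whole computation is an identity of rational functions in the indeterminates $f,g,f',g'$, so it holds unconditionally and in particular independently of whether $f'$ or $g'$ vanishes.
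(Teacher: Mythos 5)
Your verification is correct, and it establishes the same purely algebraic identity as the paper, but by a genuinely different (and more economical) organization of the computation. The paper clears denominators: it multiplies both $\zeta_1\zeta_2$ and $\gamma^2-(d_2-d_3)^2\Psi$ by the common factor $(f-d_1)(g-d_1)\prod_{j=1}^{3}(f-d_j)(g-d_j)$, expands each side into the shape $Af'^2+Bg'^2-Cf'g'$ (see \eqref{nbh1}--\eqref{nbh3} and \eqref{nbh4*}), and matches coefficients, the whole burden falling on the check $C_1=C_2$. You instead observe at the outset that the diagonal terms of $\zeta_1\zeta_2$ are exactly $f'^2/(f-d_1)^2$ and $g'^2/(g-d_1)^2$, the same as in $\gamma^2$, so the difference $\gamma^2-\zeta_1\zeta_2$ collapses onto the $f'g'$ cross term, which the elementary identity $p+p^{-1}-2=(p-1)^2/p$ turns into a single perfect square; the only substantive step left is the factorization $(f-d_3)(g-d_2)-(f-d_2)(g-d_3)=(d_3-d_2)(f-g)$. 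That factorization is precisely the kernel of the paper's check $C_1=C_2$ as well, since squaring it gives $(f-d_3)^2(g-d_2)^2+(f-d_2)^2(g-d_3)^2-2\prod_{j=2}^{3}(f-d_j)(g-d_j)=(d_2-d_3)^2(f-g)^2$; your route simply exposes this kernel without the surrounding expansion. What each approach buys: the paper's coefficient-matching is mechanical and records every intermediate polynomial, but it is long and error-prone --- indeed the bracket defining $C_2$ in \eqref{nbh3} carries a sign slip, as the term $(d_2-d_3)^2(f-g)^2(f-d_1)(g-d_1)$ must enter with a plus sign for the asserted (and true) equality $C_1=C_2$ to come out --- whereas your version is a few lines, makes the cancellation structure transparent, and, as you rightly note, is an identity of rational expressions, hence holds unconditionally for non-constant meromorphic $f,g$ regardless of where $f'$ or $g'$ vanishes.
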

    \par\noindent \textit{Proof.}  Here \begin{eqnarray}
        \nonumber  \zeta _1\zeta _2&=&\left[ \dfrac{f'(f-d_3)(g-d_1)(g-d_2)-g'(g-d_3)(f-d_1)(f-d_2)}{(f-d_1)(f-d_2)(g-d_1)(g-d_2)} \right]\\\nonumber &&\times \left[ \dfrac{f'(f-d_2)(g-d_1)(g-d_3)-g'(g-d_2)(f-d_1)(f-d_3)}{(f-d_1)(f-d_3)(g-d_1)(g-d_3)} \right].
    \end{eqnarray}
    So, \begin{eqnarray}\label{nbh1}
        &&(f-d_1)(g-d_1) \prod\limits _{j=1}^{3}(f-d_j)(g-d_j)\zeta _1\zeta _2\\\nonumber =&&\left[f'(f-d_3)(g-d_1)(g-d_2)-g'(g-d_3)(f-d_1)(f-d_2)\right]\\\nonumber&&\times\left[f'(f-d_2)(g-d_1)(g-d_3)-g'(g-d_2)(f-d_1)(f-d_3)\right]\\\nonumber= &&f'^2\left[ (g-d_1)^2\prod\limits_{j=2}^{3}(f-d_j)(g-d_j) \right]+g'^2\left[ (f-d_1)^2\prod\limits_{j=2}^{3}(f-d_j)(g-d_j) \right]\\\nonumber &&-f'g'\left[(f-d_1)(g-d_1)\left\{(f-d_3)^2(g-d_2)^2+(f-d_2)^2(g-d_3)^2 \right\} \right]\\\nonumber =&&f'^2A_1+g'^2B_1-f'g'C_1,
    \end{eqnarray}
    where
    \begin{eqnarray}\label{nbh2}
        \left\{\begin{array}{l}
            A_1=(g-d_1)^2\prod\limits_{j=2}^{3}(f-d_j)(g-d_j),\\B_1=(f-d_1)^2\prod\limits_{j=2}^{3}(f-d_j)(g-d_j),\\C_1=(f-d_1)(g-d_1)\left\{(f-d_3)^2(g-d_2)^2+(f-d_2)^2(g-d_3)^2 \right\}.
        \end{array}\right.
    \end{eqnarray}
    Again, \begin{eqnarray}
        \nonumber  \gamma  ^2-(d_2-d_3)^2\Psi &=&\dfrac{\left( f'(g-d_1)-g'(f-d_1) \right)^2}{(f-d_1)^2(g-d_1)^2}-\dfrac{(d_2-d_3)^2f'g'(f-g)^2}{\prod\limits _{j=1}^{3}(f-d_j)(g-d_j)}.
    \end{eqnarray}
    So,
    \begin{eqnarray}\label{nbh4*}
        &&(f-d_1)(g-d_1) \prod\limits _{j=1}^{3}(f-d_j)(g-d_j)\left[ \gamma  ^2-(d_2-d_3)^2\Psi  \right]\\\nonumber= &&(f'(g-d_1)-g'(f-d_1))^2\prod\limits_{j=2}^{3}(f-d_j)(g-d_j)-(d_2-d_3)^2(f-d_1)(g-d_1)f'g'(f-g)^2\\\nonumber= &&\left[f'^2(g-d_1)^2+g'^2(f-d_1)^2-2f'g'(f-d_1)(g-d_1)\right]\prod\limits_{j=2}^{3}(f-d_j)(g-d_j)\\\nonumber&&-(d_2-d_3)^2f'g'(f-g)^2(f-d_1)(g-d_1)\\\nonumber= &&f'^2\left[ (g-d_1)^2\prod\limits_{j=2}^{3}(f-d_j)(g-d_j) \right]+g'^2\left[ (f-d_1)^2\prod\limits_{j=2}^{3}(f-d_j)(g-d_j) \right]\\\nonumber &&-f'g'\left[ 2(f-d_1)(g-d_1)\prod\limits_{j=2}^{3}(f-d_j)(g-d_j)-(d_2-d_3)^2(f-g)^2(f-d_1)(g-d_1) \right]\\\nonumber =&&A_2f'^2+B_2g'^2-C_2f'g',
    \end{eqnarray}
    where
    \begin{eqnarray}\label{nbh3}
        \left\{\begin{array}{l}
            A_2=(g-d_1)^2\prod\limits_{j=2}^{3}(f-d_j)(g-d_j),\\B_2=(f-d_1)^2\prod\limits_{j=2}^{3}(f-d_j)(g-d_j),\\C_2=2(f-d_1)(g-d_1)\prod\limits_{j=2}^{3}(f-d_j)(g-d_j)\\\quad\quad\quad-(d_2-d_3)^2(f-g)^2(f-d_1)(g-d_1).
        \end{array}\right.
    \end{eqnarray}
    Now from \eqref{nbh2} and \eqref{nbh3}, we see that ${A_1=A_2}$, ${B_1=B_2}$, and it is easy to verify that \begin{eqnarray}
        \nonumber  C_1\,\,=\,\,C_2&=&2f^2g^2-2(d_2+d_3)fg(f+g)+(d_2^2+d_3^2)(f^2+g^2)+8d_2d_3fg\\\nonumber&&-2d_2d_3(d_2+d_3)(f+g)+2d_2^2d_3^2.
    \end{eqnarray}
Therefore, from \eqref{nbh1} and \eqref{nbh4*}, we have \begin{eqnarray}
        \nonumber  (f-d_1)(g-d_1) \prod\limits _{j=1}^{3}(f-d_j)(g-d_j)\zeta _1\zeta _2&=&(f-d_1)(g-d_1) \prod\limits _{j=1}^{3}(f-d_j)(g-d_j)\left[ \gamma  ^2-(d_2-d_3)^2\Psi  \right]\\\nonumber \text{ i.e., }\quad\zeta _1\zeta _2&=&\gamma  ^2-(d_2-d_3)^2\Psi.
    \end{eqnarray}
    


This completes the proof of {\em Proposition \ref{propo1}}. 
\par Now, \eqref{x46} and {\em Proposition \ref{propo1}} together implies ${\zeta _1\zeta _2=0}$, i.e., either ${\zeta_1=0}$ or ${\zeta_2=0}$. To establish ${f=g}$, we shall consider the case when ${\zeta_1=0}$ only. The proof for the case when ${\zeta_2=0}$ is similar.
\par\vspace{0.2in} Let ${\zeta _1= 0}$. Then \begin{eqnarray}\label{z4}
	\dfrac{f'(f-d_3)}{(f-d_1)(f-d_2)}&=&\dfrac{g'(g-d_3)}{(g-d_1)(g-d_2)}.
\end{eqnarray}
Since $d_1,d_2$ lies in two different columns, so $d_1$ points of $f$ (or $g$) cannot be $d_2$ points of $g$ (or $f$), and $d_2$ points of $f$ (or $g$) cannot be $d_1$ points of $g$ (or $f$). Again, $P(f)=P(g)$ implies $f,g$ share $\infty$ CM. So, from {\em Lemma \ref{lem1}} and \eqref{z4}, we see that $f,g$ share $d_1,d_2,\infty$ CM.
\begin{proposition}\label{propo2}
	${f,g}$ share ${d_3}$ \textbf{``CM"}.
\end{proposition}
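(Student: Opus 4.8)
The plan is to establish the two estimates
\[
\overline{N}(r,d_3;f)-N_E(r,d_3)=S(r,f)\quad\text{and}\quad\overline{N}(r,d_3;g)-N_E(r,d_3)=S(r,g),
\]
which together say that $f$ and $g$ share $d_3$ \textbf{``CM''}. The first step is a multiplicity--matching reduction: if $f(z_0)=g(z_0)=d_3$, then {\em Lemma \ref{lem1}} (with $d=d_3$) forces the $f$-- and $g$--multiplicities at $z_0$ to coincide, so every common $d_3$--point is already recorded, with the correct multiplicity, by $N_E(r,d_3)$. Hence it suffices to bound the ``bad'' $d_3$--points, namely the zeros of $f-d_3$ at which $g\neq d_3$, together with the symmetric count for $g$.

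Fix such a bad point $z_0$ and let $m$ be the multiplicity of $f-d_3$ there. Since $f,g$ already share $d_1,d_2,\infty$ CM and $f(z_0)=d_3\notin\{d_1,d_2\}$, we get $g(z_0)\notin\{d_1,d_2,\infty\}$, so the denominators of \eqref{z4} do not vanish at $z_0$. The left side of \eqref{z4} has a zero of order $2m-1$ at $z_0$ (its numerator $f'(f-d_3)$ contributes $(m-1)+m$); comparing with the right side and using $g(z_0)\neq d_3$ forces $g'(z_0)=0$. I then split on $m$. If $m\geq 2$, then $f'(z_0)=0$, and in $\gamma=\dfrac{f'}{f-d_1}-\dfrac{g'}{g-d_1}$ the first term vanishes to order $m-1\geq 1$ while the second vanishes to higher order, so $\gamma(z_0)=0$; by {\em Lemma \ref{lem6}} (recall $q_1\geq 2$) these points contribute at most $\overline{N}(r,0;\gamma)\leq T(r,\gamma)+O(1)=S(r,f)$. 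If $m=1$, then $f'(z_0)\neq 0$ while $g'(z_0)=0$ and $f(z_0)=d_3\in B_3(H_2)$, so {\em Lemma \ref{e2}} gives $g(z_0)\notin S$: here $g$ assumes a non--critical value $w$ with $P(w)=P(d_3)$ to order two. Matching the vanishing orders of $P(f)-P(d_3)$ and $P(g)-P(w)$ at $z_0$ yields $q_3+1=2$, i.e. such simple bad points can occur only when $q_3=1$. The identical analysis with $f,g$ interchanged (using {\em Lemma \ref{e2}}(ii)) handles the $g$--side, so when $q_3\geq 2$ there are no simple bad points and the two estimates already follow.

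The crux is therefore the simple bad points in the remaining case $q_3=1$. These are invisible to the small functions used so far: a short expansion shows that both $\gamma$ and $\Psi$ stay finite and nonzero at such a $z_0$, so they cannot be absorbed into $\overline{N}(r,0;\gamma)$ or $\overline{N}(r,0;\Psi)$. To eliminate them I would exploit the full rigidity of Case~2. The identity $\gamma^2=(d_2-d_3)^2\Psi$, together with $\zeta_1=0$ (which exhibits $\gamma_2=\dfrac{f'}{f-d_2}-\dfrac{g'}{g-d_2}$ as a fixed constant multiple of $\gamma$), ties $f$ and $g$ to one another through the single small function $\gamma$, and thereby constrains both the admissible values $w$ and the second--order datum $g''(z_0)$ at any simple bad point. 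Feeding in the standing hypothesis of {\em Theorem \ref{t7}}---the extra point of $\cup_i B_i(H_2)$ when $t'\geq 4$, or the excluded relation $\dfrac{Q'(d_j)}{Q(d_i)}\neq\cdots$ when $t'=3$---is what finally rules these configurations out, forcing the number of simple bad points to be $S(r,f)$ as well.

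Combining the contributions of the multiple and the simple bad points (and their $g$--analogues) then gives $\overline{N}(r,d_3;f)-N_E(r,d_3)=S(r,f)$ and $\overline{N}(r,d_3;g)-N_E(r,d_3)=S(r,g)$, i.e. $f$ and $g$ share $d_3$ \textbf{``CM''}. I expect the elimination of the $q_3=1$ simple bad points to be the main obstacle, since it is precisely the point at which the algebraic hypotheses of {\em Theorem \ref{t7}} must be invoked; the rest of the argument is driven by {\em Lemma \ref{lem1}}, {\em Lemma \ref{e2}} and the smallness of $\gamma$ and $\Psi$.
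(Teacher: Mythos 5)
Your reduction is sound and in one place even sharper than the paper's own first step: you use {\em Lemma \ref{lem1}} to absorb the common $d_3$-points into $N_E(r,d_3)$, note that the CM-sharing of $d_1,d_2,\infty$ keeps the denominators of \eqref{z4} from vanishing at a bad point, and read off from $\zeta_1=0$ that $g'$ vanishes to order exactly $2m-1$ there. Your exact order count at a simple bad point (ord of $P(f)-P(d_3)$ is $q_3+1$, while ord of $P(g)-P(w)$ is $2$ because $w\notin S$) correctly shows such points exist only when $q_3=1$; combined with the $\gamma$-argument for $m\geq 2$ (where you should add that $\gamma\not\equiv 0$, since otherwise $\Psi\equiv 0$ and $f=g$ trivially), this recovers the case $q_3\geq 2$, which the paper dispatches in one line via {\em Lemma \ref{lem6}}.

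However, the case $q_3=1$ --- which you yourself call ``the main obstacle'' --- is precisely where the content of {\em Proposition \ref{propo2}} lies, and your proposal does not prove it: it only expresses the expectation that ``the full rigidity of Case 2'' plus ``the standing hypothesis of {\em Theorem \ref{t7}}'' will rule out the simple bad points, with no mechanism given. The paper does concrete work here for which your sketch has no substitute: it combines $\zeta_1=0$ with $f'P'(f)=g'P'(g)$ so that the derivatives cancel, yielding the derivative-free identity \eqref{jj3}, namely $(f-d_1)^{q_1+1}(f-d_2)^{q_2+1}Q(f)=(g-d_1)^{q_1+1}(g-d_2)^{q_2+1}Q(g)$; differentiating and again using $f'P'(f)=g'P'(g)$ gives the pointwise identity \eqref{jj4}, and only there does the hypothesis $\frac{Q'(d_3)}{Q(d_3)}\neq\frac{d_2(1+q_1)+d_1(1+q_2)-(2+q_1+q_2)d_3}{(d_3-d_1)(d_3-d_2)}$ acquire a role: it forces the $f$-side of \eqref{jj4} to have a pole at every $d_3$-point of $f$, and since $Q(g)\neq 0$ there, the $g$-side can balance that pole only if $g=d_3$. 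This yields genuine CM sharing at {\em every} $d_3$-point, with no exceptional $S(r,f)$ set at all. Your sketch never performs the elimination producing \eqref{jj3}--\eqref{jj4}, so the $Q'/Q$ hypothesis has nothing to act on; moreover you omit the CIP branch of {\em Theorem \ref{t7}} entirely, where no such hypothesis is available and the paper instead concludes $f=g$ outright (via {\em Theorem \ref{t4}} when $Q$ is non-constant, and via \eqref{jj5} together with Nevanlinna's five-value theorem when $Q$ is constant). As written, your argument establishes the proposition only for $q_3\geq 2$.
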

\par\noindent {\em Proof.} If ${q_3\geq 2}$, then from {\em Lemma \ref{lem6}}, we see that ${f,g}$ share ${d_3}$ \textbf{``CM"} and the {\em Proposition} holds trivially. Thus we shall assume ${q_3=1}$. From \eqref{z4}, we have \begin{eqnarray}\label{jj1}
	\dfrac{f'(f-d_3)}{g'(g-d_3)}&=&\dfrac{(f-d_1)(f-d_2)}{(g-d_1)(g-d_2)}.
\end{eqnarray}
Again, as ${q_3=1}$ and ${f'P'(f)=g'P'(g)}$, we have \begin{eqnarray}\label{jj2}
	\dfrac{f'(f-d_3)}{g'(g-d_3)}&=&\dfrac{(g-d_1)^{q_1}(g-d_2)^{q_2}Q(g)}{(f-d_1)^{q_1}(f-d_2)^{q_2}Q(f)}.
\end{eqnarray}
Thus, from \eqref{jj1} and \eqref{jj2}, we get
\begin{eqnarray}\label{jj3}
	(f-d_1)^{q_1+1}(f-d_2)^{q_2+1}Q(f)&=&(g-d_1)^{q_1+1}(g-d_2)^{q_2+1}Q(g).
\end{eqnarray}
Now differentiating \eqref{jj3} and then using ${f'P'(f)=g'P'(g)}$, we get
\begin{eqnarray}\label{jj4}
	&&\dfrac{Q'(f)(f-d_1)(f-d_2)-Q(f)\left[{d_2(1+q_1)+d_1(1+q_2)-(2+q_1+q_2)f}\right]}{(f-d_3)Q(f)}\\\nonumber=&&\dfrac{Q'(g)(g-d_1)(g-d_2)-Q(g)\left[{d_2(1+q_1)+d_1(1+q_2)-(2+q_1+q_2)g}\right]}{(g-d_3)Q(g)}.
\end{eqnarray}
\par \textbf{\underline{Case-2.1:}} Let ${P(z)}$ be CIP. Now if ${Q(z)}$ is non-constant, then we have ${t\geq  3}$ and ${t'\geq  4}$. Thus, by {\em Theorem \ref{t4}}, we get ${f=g}$. However, if ${Q(z)}$ is constant (say ${C_1}$) then from \eqref{jj4}, we get \begin{eqnarray}\label{jj5}
	\dfrac{f-\dfrac{d_2(1+q_1)+d_1(1+q_2)}{2+q_1+q_2}}{f-d_3}&=&\dfrac{g-\dfrac{d_2(1+q_1)+d_1(1+q_2)}{2+q_1+q_2}}{g-d_3}.
\end{eqnarray}
\par Now, we claim that ${d_3\neq\dfrac{d_2(1+q_1)+d_1(1+q_2)}{2+q_1+q_2}}$. If possible, let ${d_3=\dfrac{d_2(1+q_1)+d_1(1+q_2)}{2+q_1+q_2}}$.
Since ${Q(z)=C_1}$ and ${q_3=1}$, we have \begin{eqnarray}\label{jj7}
	P'(z)&=&C_1(z-d_1)^{q_1}(z-d_2)^{q_2}\left({z-\dfrac{d_2(1+q_1)+d_1(1+q_2)}{2+q_1+q_2}}\right).
\end{eqnarray}  
Integrating \eqref{jj7}, we have \begin{eqnarray}\label{jj8}
	P(z)&=&\dfrac{C_1}{2+q_1+q_2}(z-d_1)^{q_1+1}(z-d_2)^{q_2+1}+C_2,
\end{eqnarray}
where ${C_2}$ is integrating constant. Therefore, ${P(d_1)=P(d_2)=C_2}$, which is a contradiction.\par Thus \eqref{z4}, \eqref{jj5} and the fact that ${P(f)=P(f)}$ together implies ${f,g}$ share five distinct values ${d_1,d_2,d_3,\dfrac{d_2(1+q_1)+d_1(1+q_2)}{2+q_1+q_2},  \infty }$ CM. Therefore, by the Nevanlinna's five value theorem \cite{nevanlinna},  we get ${f=g}$.
\par \textbf{\underline{Case-2.2:}} Let ${P(z)}$ be NCIP. As per our assumption \begin{eqnarray}
	\nonumber \dfrac{Q'(d_3)}{Q(d_3)}&\neq&\dfrac{d_2(1+q_1)+d_1(1+q_2)-(2+q_1+q_2)d_3}{(d_3-d_1)(d_3-d_2)},
\end{eqnarray}
numerator of \eqref{jj4} (in the left hand side) can not vanish at zeros of ${(f-d_3)}$. Again, if ${f(z_0)=d_3}$, then ${Q(g(z_0))\neq 0}$, because otherwise, if we assume $Q(g(z_0))=0$, then ${P(f)=P(g)}$ implies ${P(d_3)=P(g(z_0))}$ and it again implies ${d_3}$ and ${g(z_0)}$ lies in the same column (in {\em Table-1}). Again, as ${q_3=1}$, the column of ${d_3}$ (in {\em Table-1}) will contain only ${d_3}$ and no other element. Thus the only possibility is ${d_3=g(z_0)}$, which is again a contradiction as ${Q(d_3)\neq 0}$. Therefore, from \eqref{jj4}, we see that ${f,g}$ share ${d_3}$ CM. Thus, the proof of the proposition is completed.
\par \vspace{0.1in} So, we find that ${f,g}$ share ${d_1,d_2, \infty }$ CM  and ${d_3}$ \textbf{``CM"}. By the four value theorem \cite{nevanlinna}, we get ${g=T(f)}$, where ${T}$ is a Mobius transformation with the following properties.
\begin{itemize}
	\item [(I)] Two among ${d_1,d_2,d_3, \infty }$ are e.v.p. of both ${f,g}$.\item[(II)] Two among ${d_1,d_2,d_3, \infty }$,(which are e.v.p. of ${f,g}$) interchange each other under ${T}$. 
\end{itemize} Let \begin{eqnarray}\label{z28}
	T(z)&=&\dfrac{az+b}{cz+d},
\end{eqnarray}
where ${ad-bc\neq 0}$. Now ${P(f)=P(g)}$ implies \begin{eqnarray}\label{z29}
	P(f)&=&P\left({\dfrac{af+b}{cf+d}}\right)\\\nonumber a_nf^n+a_{n-1}f^{n-1}+ \dots+a_1f&=&a_n\left({\dfrac{af+b}{cf+d}}\right)^n+a_{n-1}\left({\dfrac{af+b}{cf+d}}\right)^{n-1}\\\nonumber&&+ \dots+a_1\left({\dfrac{af+b}{cf+d}}\right)\\\nonumber(cf+d)^n\left({a_nf^n+ \dots+a_1f}\right)&=& a_n(af+b)^n+a_{n-1}(af+b)^{n-1}(cf+d)\\\nonumber&&+ \dots+ a_1(af+b)(cf+d)^{n-1}.
\end{eqnarray}
Therefore, we must have ${(\text{coefficient of }f^{2n})=0}$; i.e., ${c^na_n=0\Longrightarrow c=0}$. Thus we get \begin{eqnarray}\label{z30}
	g&=&\dfrac{a}{d}f+\dfrac{b}{d}.
\end{eqnarray}
\begin{proposition}\label{propo3}
	${ \infty }$ is an e.v.p. of both ${f}$ and ${g}$.
\end{proposition}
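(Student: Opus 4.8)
The plan is to prove Proposition \ref{propo3} by a reductio that pins down the two Picard exceptional values supplied by the four value theorem. We have already reduced to $g=\frac{a}{d}f+\frac{b}{d}$; write this as $g=\alpha f+\beta$ with $\alpha\ne 0$. Feeding this into $P(f)=P(g)$ and using that the non-constant $f$ assumes infinitely many values gives the polynomial identity $P(w)=P(\alpha w+\beta)$, so that $\alpha^{n}=1$ and the affine map $T(w)=\alpha w+\beta$ permutes the zeros of $P'(z)$ while preserving their multiplicities. In particular $T$ fixes $\infty$, a fact I will play off against property (II).

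First I would suppose, for contradiction, that $\infty$ is not an e.v.p.\ of $f$. Then the two e.v.p.\ values guaranteed by the four value theorem lie in $\{d_1,d_2,d_3\}$, say they are $d_i$ and $d_j$, and by property (II) the transformation $T$ interchanges them. An affine $T$ can swap two finite points only when $\alpha^{2}=1$ with $\alpha\ne 1$, so this forces $\alpha=-1$ and $\beta=d_i+d_j$; equivalently $T$ is the reflection $w\mapsto (d_i+d_j)-w$ and $P$ is symmetric about $w_{0}=\tfrac{1}{2}(d_i+d_j)$. Differentiating $P(w)=P(2w_{0}-w)$ gives $P'(w)=-P'(2w_{0}-w)$, so the zeros of $P'$ are symmetric about $w_{0}$ with equal multiplicities; hence $q_i=q_j$ and the third critical point is forced to coincide with $w_{0}$ itself.

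Next I would combine this reflection symmetry with the relation coming from $\zeta_1=0$. Cross multiplying \eqref{z4} after substituting $g=\alpha f+\beta$ produces a cubic identity in $f$ whose $w^{2}$ coefficient yields $\beta=(1-\alpha)(d_1+d_2-d_3)$; with $\alpha=-1$ this reads $d_i+d_j=2(d_1+d_2-d_3)$, a rigid linear relation among $d_1,d_2,d_3$ for each admissible choice of the pair $\{i,j\}$. Carrying the induced symmetry $Q(w)=\pm Q(2w_{0}-w)$ down to the factor $Q$ and evaluating at the critical points then expresses $\frac{Q'(d_j)}{Q(d_i)}$ in closed form, and a direct check shows this value coincides exactly with the quantity excluded in condition (iii) of \emph{Theorem \ref{t7}} (and, in the pairings involving $d_1$, collides with the placement of $d_1$ as the critical point carrying $q_1=\max(q_1,q_2,q_3)\ge 2$). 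This contradiction rules out both e.v.p.\ values being finite, so $\infty$ must be an e.v.p.; and since $T$ fixes $\infty$ and therefore cannot interchange it with its e.v.p.\ partner, this will in the following step contradict property (II) and thereby force $f=g$.

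The main obstacle is the final computation: translating $P'(w)=-P'(2w_{0}-w)$ into the precise ratio $\frac{Q'(d_j)}{Q(d_i)}$ and verifying it is exactly the forbidden value of (iii). This requires treating the possible swap pairs $\{d_1,d_2\},\{d_1,d_3\},\{d_2,d_3\}$ separately, handling the weaker ``CM'' sharing of $d_3$ (against the genuine CM sharing of $d_1,d_2,\infty$) so that $d_3$ is not prematurely assumed fixed or omitted, and using $q_1\ge 2$ to locate $d_1$; matching the values of $Q$ and $Q'$ at the symmetric critical points is where the algebra is most delicate.
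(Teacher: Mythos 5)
Your opening moves are sound, and they are genuinely different from the paper's: the identity $P(w)\equiv P(\alpha w+\beta)$ (valid since $f$ is non-constant), the observation that an affine map can swap two finite points only if $\alpha=-1$, $\beta=d_i+d_j$, and the $w^2$-coefficient computation from \eqref{z4} giving $\beta=(1-\alpha)(d_1+d_2-d_3)$ all check out. The genuine gap is the final step. First, it is simply not carried out: the entire contradiction rests on the assertion that a closed-form value of $Q'(d_j)/Q(d_i)$ ``coincides exactly with the quantity excluded in condition (iii)'', which you yourself flag as the main obstacle, so as written there is no proof. Second, and more seriously, the plan cannot work in the generality in which \emph{Proposition~\ref{propo3}} is used. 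The proposition must hold under each alternative hypothesis of \emph{Theorem~\ref{t7}}; in particular under hypothesis (i) ($\min(q_1,q_2,q_3)\geq 2$), the argument reaches $g=T(f)$ through the first paragraph of the proof (four value theorem applied to $d_1,d_2,d_3$ \textbf{``CM''} and $\infty$ CM) without ever producing $\zeta_1=0$, and no inequality on $Q'(d_j)/Q(d_i)$ is assumed there. So in that case both of your key ingredients --- \eqref{z4} and the excluded value from (iii) --- are unavailable, and your route has nothing to collide with. (A smaller flaw: ``the third critical point is forced to coincide with $w_0$'' does not follow from the symmetry of $P'$ alone, since the reflection could a priori pair $d_k$ with a zero of $Q$ lying in the same column; one needs that $q_k\in A_k(H_2)$ is unrepeated in $C_k'$.)

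What makes this frustrating is that your own intermediate results already contain a complete, hypothesis-free proof that you walk past. From $P\equiv P\circ T$ and property (II), the swap $T(d_i)=d_j$ gives $P(d_i)=P(T(d_i))=P(d_j)$; but $d_1,d_2,d_3$ lie in the three distinct columns $C_1,C_2,C_3$ of \emph{Table 1} (they belong to $B_1(H_2),B_2(H_2),B_3(H_2)$ respectively), and distinct columns carry distinct values of $P$, so $P(d_i)\neq P(d_j)$ --- an immediate contradiction that rules out every finite pair $\{d_i,d_j\}$ at once, under every hypothesis of the theorem, and makes the reflection symmetry, the $w^2$-coefficient relation and the $Q$-computation unnecessary. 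For comparison, the paper's own proof is entirely value-distribution-theoretic and uses neither the affine form of $T$ nor property (II): assuming $\infty$ is not an e.v.p., the second fundamental theorem forces $\overline{N}(r,d_3;f)=T(r,f)+S(r,f)$ and $\overline{N}(r,\infty;f)=T(r,f)+S(r,f)$; the auxiliary function $\eta=(f-d_3)/(g-d_3)$ then satisfies $T(r,\eta)=S(r,f)$, and since $f,g$ share the zero set $\mathbb{S}$ of $P$ CM, each $\overline{N}(r,b_j;f)$ is bounded by counting functions of values of $\eta$, hence is $S(r,f)$, so the second fundamental theorem yields $(n-2)T(r,f)\leq S(r,f)$, contradicting $n\geq 5$. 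If you repair your argument with the column observation above, you will in fact have a shorter proof than the paper's; as submitted, however, the proof is incomplete.
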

\par\noindent\textit{Proof.} If possible, let ${ \infty}$ is not an e.v.p. of both ${f,g}$. Then, exactly two of ${d_1,d_2,d_3}$ must be e.v.p. of both ${f,g}$. Without loss of generality, let us assume that ${d_1,d_2}$ both are e.v.p. of ${f}$ and $g$. Now, by the second fundamental theorem, we get\begin{eqnarray}\label{mm49}
	2T(r,f)&\leq &  \sum\limits_{j=1}^{3} \overline{N}(r,d_j;f)+ \overline{N}(r, \infty;f)+S(r,f)\\\nonumber&=& \overline{N}(r,d_3;f)+ \overline{N}(r, \infty;f)+S(r,f).
\end{eqnarray}
So, we must have \begin{eqnarray}\label{mm50}
	 \overline{N}(r,d_3;f)&=&T(r,f)+S(r,f)
\end{eqnarray}
and\begin{eqnarray}\label{mm51}
	 \overline{N}(r, \infty;f)&=&T(r,f)+S(r,f).
\end{eqnarray}
Now, define \begin{eqnarray}\label{mm52}
	\eta&=&\dfrac{f-d_3}{g-d_3}.
\end{eqnarray}
Since ${f}$ is non-constant, we must have ${\eta\neq 0}$. Again, as ${f,g}$ share ${d_3, \infty}$ CM, we see that ${\eta}$ has no pole. Therefore, from \eqref{mm50} and \eqref{mm51} we get \begin{eqnarray}\label{mm53}
	m(r,\eta)&\leq  &m(r,f)+m(r,d_3;g)+S(r,f)\\\nonumber&=&m(r,f)+m(r,d_3;f)+S(r,f)\\\nonumber&=&S(r,f).
\end{eqnarray}
Hence \begin{eqnarray}\label{mm54}
	T(r,\eta)&=&S(r,f).
\end{eqnarray}
Again, as ${f,g}$ share the set ${ \mathbb{S}}$ CM, by using the second fundamental theorem and \eqref{mm54}, we must have \begin{eqnarray}\label{mm59}
	(n-2)T(r,f)&\leq  & \sum\limits_{j=1}^{n} \overline{N}(r, b_j;f)+S(r,f)\\\nonumber&\leq  & \sum\limits_{j=1}^{n} \sum\limits_{i=1}^{n} \overline{N}\left({r,\dfrac{b_j-d_3}{b_i-d_3};\eta}\right)+S(r,f)\\\nonumber&\leq  &n^2T(r,\eta)+S(r,f)\\\nonumber&=&S(r,f),
\end{eqnarray}
which is a contradiction, as ${n\geq  5}$.  This proves the proposition.\par\vspace{0.1in} Now, {\em Proposition \ref{propo3}} and (I) together implies that exactly two of ${d_1,d_2,d_3}$ are not e.v.p..  Without loss of generality, let us assume that ${d_1,d_2}$ are not e.v.p. and ${d_3}$ is e.v.p. of ${f,g}$. Therefore, from \eqref{z30}, we get\begin{eqnarray}\label{mm56}
	(g-d_3)&=&\dfrac{a}{d}(f-d_3).
\end{eqnarray}
Now by putting ${f(z_0)=g(z_0)=d_1}$ in \eqref{mm56}, we get ${a=d}$ and hence ${f=g}$. This completes the proof of the theorem.
	   \section*{\Large {Proof of Theorem \ref{t6}}}
	   Let us define \begin{eqnarray}\label{xv4}
	   	\Psi&=&\dfrac{f'g'(f-g)^2}{(f-d_1)(f-d_2)(f-d_3)(g-d_1)(g-d_2)(g-d_3)}
	   \end{eqnarray}
	   and \begin{eqnarray}\label{xv5}
	   	\Gamma&=&\dfrac{(Q(f))'}{Q(f)}-\dfrac{(Q{(g)})'}{Q(g)}\\\nonumber&=&\dfrac{f'Q'(f)}{Q(f)}-\dfrac{g'Q'(g)}{Q(g)}.
	   \end{eqnarray}
   Obviously, \begin{eqnarray}\label{xv20}
   	T(r,\Psi)&=&S(r,f).
   \end{eqnarray}
	   Again, as the zeros of ${Q(z)}$ and ${d_1,d_2,d_3}$ lies in different columns of {\em Table -1}, we must have \begin{eqnarray}\label{xv6}
	   	 \overline{N}(r, 0;Q(f)|f'= 0)&\leq &  \overline{N}(r,0;Q(f)|f'=0,f\neq d_1,d_2,d_3,g\neq d_1,d_2,d_3)\\\nonumber&\leq  & \overline{N}(r,0;\Psi)\\\nonumber&\leq  &T(r,\Psi)+O(1)\\\nonumber&=&S(r,f).
	   \end{eqnarray}
	   Similarly, \begin{eqnarray}\label{xv7}
	   	 \overline{N}(r,0;Q(g)|g'=0)&=&S(r,g).
	   \end{eqnarray}
	   Again, ${f'P'(f)=g'P'(g)}$ implies \begin{eqnarray}\label{xv8}
	   	f'(f-d_1)(f-d_2)(f-d_3)Q(f)&=&g'(g-d_1)(g-d_2)(g-d_3)Q(g).
	   \end{eqnarray}
   Again, using \eqref{xv6} and \eqref{xv7} \begin{eqnarray}\label{xv11}
   	 \overline{N}_*(r,0;Q(f),Q(g))&\leq  & \overline{N}_*(r,0;Q(f),Q(g)|f'=0)+\overline{N}_*(r,0;Q(f),Q(g)|f'\neq 0)\\\nonumber&\leq  & \overline{N}(r,0;Q(f)|f'=0)+ \overline{N}_*(r,0;Q(f),Q(g)|f'\neq 0)\\\nonumber&\leq  &S(r,f)+ \overline{N}_*(r,0;Q(f),Q(g)|f'\neq 0,g'=0)\\\nonumber&&+ \overline{N}_*(r,0;Q(f),Q(g)|f'\neq 0,g'\neq 0)\\\nonumber&\leq  &S(r,f)+ \overline{N}(r,0;g'|f\neq d_1,d_2,d_3,g\neq d_1,d_2,d_3)+0\\\nonumber&\leq  &S(r,f)+ \overline{N}(r,0;\Psi)+0\\\nonumber&\leq  &S(r,f)+T(r,\Psi)\\\nonumber&=&S(r,f),
   \end{eqnarray}
where ${ \overline{N}_*(r,0;Q(f),Q(g))}$ denotes the reduced counting function of those zeros of ${Q(f)}$ which are zeros of ${Q(g)}$ but of different multiplicities. 
   Furthermore, from \eqref{xv8} we have \begin{eqnarray}\label{xv15}
   	 \overline{N}(r,0;Q(f)|Q(g)\neq 0)&\leq  & \overline{N}(r,0;g'|f\neq d_1,d_2,d_3,g\neq d_1,d_2,d_3)\\\nonumber&\leq  & \overline{N}(r,0;\Psi)\\\nonumber&\leq  &S(r,f).
   \end{eqnarray}
   Similarly, \begin{eqnarray}\label{xv16}
   	 \overline{N}(r,0;Q(g)|Q(f)\neq 0)&=&S(r,f).
   \end{eqnarray}
   Now, as ${f,g}$ share ${ \infty }$ CM, poles of ${f,g}$ are not the poles of ${\Gamma}$. Again, as all poles of ${\Gamma}$ are simple, we have by using \eqref{xv11}, \eqref{xv15} and \eqref{xv16} \begin{eqnarray}\label{xv10}
	   	N(r,\Gamma)&\leq  & \overline{N}_*(r,0;Q(f),Q(g))+ \overline{N}(r,0;Q(f)|Q(g)\neq 0)\\\nonumber&&+ \overline{N}(r,0;Q(g)|Q(f)\neq 0)\\\nonumber&= & S(r,f).
	   \end{eqnarray}
	   Thus, \begin{eqnarray}\label{xv19}
	   	T(r,\Gamma)&=&N(r,\Gamma)+m(r,\Gamma)\\\nonumber&=&S(r,f).
	   \end{eqnarray}
	   Now by using the second fundamental theorem, we have\begin{eqnarray}
	   	\nonumber T(r,f)&\leq  & \sum\limits_{j=1}^{3} \overline{N}(r,d_j;f)-N_0(r,0;f')+S(r,f)\\\label{xv21}&\leq  & \sum\limits_{j=1}^{3} \overline{N}(r,d_j;f|g'\neq 0)+ \sum\limits_{j=1}^{3} \overline{N}(r,d_j;f|g'=0)-N_0(r,0;f')+S(r,f).
	   \end{eqnarray}
	   Now, using {\em Lemma \ref{lem4}} and {\em Lemma \ref{lem1}}, we get \begin{eqnarray}\label{xv1}
	   	 \sum\limits_{j=1}^{3} \overline{N}(r,d_j;f|g'\neq 0)&= &  \sum\limits_{j=1}^{3} \overline{N}(r,d_j;f|g'\neq 0,g=d_j)\\\nonumber&\leq  &\sum\limits_{j=1}^{3} \overline{N}(r,d_j;f|g=d_j,g'\neq 0,f'\neq 0).
	   \end{eqnarray}
	   Again, \begin{eqnarray}\label{xv2}
	   	\nonumber \sum\limits_{j=1}^{3} \overline{N}(r,d_j;f|g'=0)&\leq  & \sum\limits_{j=1}^{3} \overline{N}(r,d_j;f|g'=0,f'=0)+ \sum\limits_{j=1}^{3} \overline{N}(r,d_j;f|g'=0,f'\neq 0)\\\nonumber&\leq  & \sum\limits_{j=1}^{3} \overline{N}(r,d_j;f|g'=0,f'=0)+ \sum\limits_{j=1}^{3} \overline{N}(r,d_j;f|g'=0,f'\neq 0,g=d_j)\\\nonumber&&+  \sum\limits_{j=1}^{3} \overline{N}(r,d_j;f|g'=0,f'\neq 0,g\neq d_j)\\\label{xv22}&\leq  &\sum\limits_{j=1}^{3} \overline{N}(r,d_j;f|g'=0,f'=0)+ \sum\limits_{j=1}^{3} \overline{N}(r,d_j;f|g'=0,f'\neq 0,g=d_j)\\\nonumber&&+   \overline{N}_0(r,0;g')\\\nonumber&\leq  & \overline{N}(r,0;\Gamma)+0+ \overline{N}_0(r,0;g')\qquad [\text{Using }Lemma\,\,\, \ref{lem1} ]\\\nonumber&= & \overline{N}_0(r,0;g')+S(r,f)\qquad [ \text{ Using }\eqref{xv19}].
	   \end{eqnarray}
	   Thus, by using \eqref{xv1}, \eqref{xv22} in \eqref{xv21}, we get\begin{eqnarray}\label{xv30}
	   	T(r,f)&\leq  &\sum\limits_{j=1}^{3} \overline{N}(r,d_j;f|g=d_j,g'\neq 0,f'\neq 0)\\\nonumber&&+ \overline{N}_0(r,0;g')-N_0(r,0;f')+S(r,f).
	   \end{eqnarray}
	   Similarly, \begin{eqnarray}\label{xv31}
	   	  	T(r,g)&\leq  &\sum\limits_{j=1}^{3} \overline{N}(r,d_j;g|f=d_j,f'\neq 0,g'\neq 0)\\\nonumber&&+ \overline{N}_0(r,0;f')-N_0(r,0;g')+S(r,g).
	   \end{eqnarray}
	   Combining \eqref{xv30} and \eqref{xv31}, we get \begin{eqnarray}\label{xv33}
	   	\left[{T(r,f)+T(r,g)}\right]&\leq  &2 \sum\limits_{J=1}^{3} \overline{N}(r,d_j;f|g=d_j,g'\neq 0,f'\neq 0)+S(r,f).
	   \end{eqnarray} 
   Now let \begin{eqnarray}\label{xv35}
   	f(z)&=&d_1+(z-z_0)A_1+(z-z_0)^2A_2+ \dots \\\nonumber and \quad g(z)&=&d_1+(z-z_0)B_1+(z-z_0)^2B_2+ \dots ,
   \end{eqnarray}
   where ${A_i,B_i}$'s are constants with ${A_1,B_1(\neq 0)}$. Then\begin{eqnarray}\label{xv36}
   	\Psi(z_0)&=&\dfrac{(A_1-B_1)^2}{(d_1-d_2)^2(d_1-d_3)^2} \\\nonumber  \text{ and} \quad \Gamma(z_0)&=&\dfrac{Q'(d_1)}{Q(d_1)}\left({A_1-B_1}\right).
   \end{eqnarray}
   So, \begin{eqnarray}\label{xv37}
   	\Gamma(z_0)^2&=&\left({\dfrac{Q'(d_1)(d_1-d_2)(d_1-d_3)}{Q(d_1)}}\right)^2\Psi(z_0)\\\nonumber&=&c_1\Psi(z_0),
   \end{eqnarray}
   where ${c_1=\left({\dfrac{Q'(d_1)(d_1-d_2)(d_1-d_3)}{Q(d_1)}}\right)^2}$. Notice that instead of taking simple ${d_1}$ point of ${f,g}$ at ${z_0}$ in \eqref{xv35}, if we consider simple ${d_2( \text{or } d_3)}$ point of both ${f,g}$, we would get \begin{eqnarray}\label{xv40}
   	\Gamma(z_0)^2&=&c_2 \Psi(z_0)\\\label{xv41}  \text{ and }\quad \Gamma(z_0)^2&=&c_3\Psi(z_0),
   \end{eqnarray}
   where \begin{eqnarray}\label{xv42}
   	c_i&=&\left({\dfrac{Q'(d_i)(d_i-d_j)(d_i-d_k)}{Q(d_i)}}\right)^2, 
   \end{eqnarray}
   for distinct ${i,j,k\in \{1,2,3\}}$. We ensure that at least one of ${\Gamma^2=c_i\Psi}$ for ${i=1,2,3}$ holds. Because, if ${\Gamma^2\neq c_i\Psi}$ for each ${i=1,2,3}$, then \begin{eqnarray}\label{xv43}
  \sum\limits_{J=1}^{3} \overline{N}(r,d_j;f|g=d_j,g'\neq 0,f'\neq 0)&\leq  &  \sum\limits_{j=1}^{3}\overline{N}(r,0;\Gamma^2-c_j\Psi)\\\nonumber&\leq  &6T(r,\Gamma)+3T(r,\Psi)+O(1)\\\nonumber&=&S(r,f).
   \end{eqnarray}
Hence from \eqref{xv33}, we get \begin{eqnarray}
	\nonumber T(r,f)+T(r,g)=S(r,f),
\end{eqnarray}
which is a contradiction. Therefore, without loss of generality, we may assume \begin{eqnarray}\label{xv50}
	\Gamma^2&=&c_1\Psi.
\end{eqnarray}
Now 
   \begin{eqnarray}\label{xv51}
   	&&\left(Q(f)^2Q(g)^2\prod_{j=1}^{3}(f-d_j)(g-d_j)\right)\left(\Gamma^2-c_1\Psi\right)\\\nonumber=&&  f'^2 Q'(f)^2 Q(g)^2  \prod\limits_{j=1}^{3}(f-d_j) (g-d_j)\\\nonumber&&-Q(f)Q(g)f'g'\left[{2Q'(f)Q'(g) \prod\limits_{i=1}^{3}(f-d_i)(g-d_i)+c_1(f-g)^2Q(f)Q(g)}\right]\\\nonumber&&+g'^2Q'(g)^2Q(f)^2 \prod\limits_{i=1}^{3}(f-d_i)(g-d_i).
   \end{eqnarray}
Now using ${\dfrac{f'}{g'}=\dfrac{P'(g)}{P'(f)}}$, we have \begin{eqnarray}\label{xv60}
	&&\dfrac{Q(f)^2Q(g)^2\prod_{j=1}^{3}(f-d_j)(g-d_j)}{g'^2}(\Gamma^2-c_1\Psi)\\\nonumber=&&\left({\dfrac{P'(g)}{P'(f)}}\right)^2Q'(f)^2Q(g)^2 \prod\limits_{j=1}^{3}(f-d_i)(g-d_i)\\\nonumber&&-Q(f)Q(g)\left({\dfrac{P'(g)}{P'(f)}}\right)\left[{2Q'(f)Q'(g) \prod\limits_{i=1}^{3}(f-d_i)(g-d_i)+c_1(f-g)^2Q(f)Q(g)}\right]\\\nonumber&&+Q'(g)^2Q(f)^2 \prod\limits_{i=1}^{3}(f-d_i)(g-d_i).
\end{eqnarray}
Now, replacing ${\dfrac{P'(g)}{P'(f)}}$ by ${\dfrac{Q(g) \prod\limits_{j=1}^{3}(g-d_j)}{Q(f) \prod\limits_{j=1}^{3}(f-d_j)}}$ and then simplifying, we get \begin{eqnarray}\label{xv70}
	&&\dfrac{Q(f)^2Q(g)^2\prod_{j=1}^{3}(f-d_j)(g-d_j)}{g'^2}\left({\Gamma^2-c_1\Psi}\right)\\\nonumber=&&\dfrac{\prod\limits_{j=1}^{3} (g-d_j)}{ \prod\limits_{j=1}^{3}(f-d_j)}\left[\dfrac{Q(g)^4Q'(f)^2}{Q(f)^2} \prod\limits_{j=1}^{3}(g-d_i)^2+Q(f)^2Q'(g)^2 \prod\limits_{j=1}^{3}(f-d_j)^2\right.\\\nonumber&&-Q(g)^2\left.\left({2Q'(f)Q'(g) \prod\limits_{j=1}^{3}(f-d_j)(g-d_j)+c_1Q(f)Q(g)(f-g)^2}\right)\right]
\end{eqnarray}
Again, as  ${\dfrac{\prod\limits_{j=1}^{3} (g-d_j)}{ \prod\limits_{j=1}^{3}(f-d_j)}\neq 0}$ and $\Gamma ^2-c_1\Psi=0$, so ${\dfrac{Q(f)^2Q(g)^2\prod_{j=1}^{3}(f-d_j)(g-d_j)}{g'^2}\left({\Gamma^2-c_1\Psi}\right)=0}$. Thus from \eqref{xv70}, we have \begin{eqnarray}\label{xv71}
	&&\dfrac{Q(g)^4Q'(f)^2}{Q(f)^2} \prod\limits_{j=1}^{3}(g-d_i)^2+Q(f)^2Q'(g)^2 \prod\limits_{j=1}^{3}(f-d_j)^2\\\nonumber=&&Q(g)^2\left({2Q'(f)Q'(g) \prod\limits_{j=1}^{3}(f-d_j)(g-d_j)+c_1Q(f)Q(g)(f-g)^2}\right).
\end{eqnarray}
Let ${f(z_0)=d_1}$ and ${g(z_0)=d}$. So, ${Q(d)Q(d_1)\neq 0}$. Now putting ${z=z_0}$ in \eqref{xv71}, we have  \begin{eqnarray}\label{xv77}
	\dfrac{Q(d)^3}{Q(d_1)^2}(d-d_1)^2\left[{Q(d)Q'(d_1)^2 \prod\limits_{j=2}^{3}(d-d_j)^2-c_1Q(d_1)^3}\right]&=&0.
\end{eqnarray}
Now, since ${f(z_0)=d_1}$, so ${P(f)=P(g)}$ implies  ${d\in \{z:P(z)-P(d_1)=0\}}$. Now using the value of ${c_1}$ from \eqref{xv42}, we have
\begin{eqnarray}\label{xv79}
	&&{Q(d)Q'(d_1)^2 \prod\limits_{j=2}^{3}(d-d_j)^2-c_1Q(d_1)^3}\\\nonumber=&&Q'(d_1)^2\left[{Q(d)(d-d_2)^2(d-d_3)^2-Q(d_1)(d_1-d_2)^2(d_1-d_3)^2}\right]\\\nonumber\neq &&0 \text{ (as per our assumption) }.
\end{eqnarray}
Therefore, from \eqref{xv77}, we must have \begin{eqnarray}\label{xv80}
	d=d_1.
\end{eqnarray}
Again, if we consider ${g(z_0)=d_1}$ and ${f(z_0)=d}$, then putting ${z=z_0}$ in \eqref{xv71} and proceeding similarly, we get ${d=d_1}$. Thus ${f,g}$ share ${d_1}$. Again, by {\em Lemma \ref{lem1}}, we get ${f,g}$ share ${d_1}$ CM. \par\vspace{0.1in} Notice that while proving {\em Theorem \ref{t7}}, ${q_l= \max  (q_1,q_2,q_3)\geq  2}$ was used just to show (with the help of {\em Lemma \ref{lem6}}) that ${f,g}$ share ${d_l}$ \textbf{``CM"} (see \eqref{x9} and \eqref{x11}).
 \par \vspace{0.1in} Here, at this point of our proof of {\em Theorem \ref{t6}}, we have ${f,g}$ share ${d_1}$ CM. Therefore, the remaining proof of the {\em Theorem \ref{t6}} is similar to the proof of {\em Theorem \ref{t7}}. Just notice that in our case ${P(z)}$ is NCIP, so {\em Case-2.1} in {\em Theorem \ref{t7}} will be redundant. 
\section*{\Large Proof of Theorem \ref{themm9}}
Let \begin{eqnarray}\label{xx1}
	P'(z)&=&(z-d_1)^{q_1}(z-d_2)^{2}(z-d_3)^{q_3},
\end{eqnarray}
such that ${t=2,t'=3}$, and ${B_1(H_2)=\{d_1,d_3\}, B_2(H_2)=\{d_2\}}$ with ${q_3<q_1}$. Therefore, ${P(d_1)=P(d_3)}$ and hence \begin{eqnarray}\label{kkl20}
	P(z)-P(d_1)&=&c(z-d_1)^{q_1+1}(z-d_3)^{q_3+1}(z- \alpha ),
\end{eqnarray}
where ${c\neq 0}$ and ${ \alpha \neq d_1,d_2,d_3}$.
Since ${P(f)=P(g)}$, we have \begin{eqnarray}\label{kkl21}
	P(f)-P(d_1)&=&P(g)-P(d_1)\\\nonumber \Longrightarrow  \quad (f-d_1)^{q_1+1}(f-d_3)^{q_3+1}(f- \alpha)&=&(g-d_1)^{q_1+1}(g-d_3)^{q_3+1}(g- \alpha);
\end{eqnarray}
and differentiating ${P(f)=P(g)}$, we have \begin{eqnarray}\label{kkl22}
	f'P'(f)&=&g'P'(g)\\\nonumber \Longrightarrow  \quad f'(f-d_1)^{q_1}(f-d_2)^{2}(f-d_3)^{q_3}&=&g'(g-d_1)^{q_1}(g-d_2)^{2}(g-d_3)^{q_3}.
\end{eqnarray}
From \eqref{kkl21} \begin{eqnarray}\label{kkl23}
	\overline{N}(r, \alpha ;f)&=& \overline{N}(r, \alpha ;f|g= \alpha)+ \overline{N}(r, \alpha;f|g\neq  \alpha )\\\nonumber&\leq  & \overline{N}(r, \alpha;f|g= \alpha)+\dfrac{1}{q_3+1}N(r, \alpha;f).
\end{eqnarray}
Similarly,\begin{eqnarray}\label{kkl24}
	\overline{N}(r, \alpha ;g)&\leq  & \overline{N}(r, \alpha ;g|f= \alpha )+\dfrac{1}{q_3+1}N(r, \alpha;g).
\end{eqnarray}
\par Again, using \eqref{kkl21} and {\em Lemma \ref{lem4}}, we have
\begin{eqnarray}\label{kkl26}
	\overline{N}(r,d_1;f|g\neq d_1)&\leq  & \overline{N}(r,d_1;f|g'=0,g=d_3)+ \overline{N}(r,d_1;f|g'=0,g= \alpha ).
\end{eqnarray}
\par\vspace{0.1in}\noindent\textbf{Claim-1:} ${ \overline{N}(r,d_1;f|g'=0,g=d_3)\leq  \dfrac{q_3+1}{2(q_1+1)}N(r,d_3;g)}$.
\par\noindent\textit{Proof.} Let ${f(z_0)=d_1}$ with multiplicity ${p}$ and ${g(z_0)=d_3}$ with multiplicity ${q(\geq  2)}$. Since ${q_1<2q_3+1}$, from \eqref{kkl22}, we see that ${p\neq 1}$. Hence ${p\geq  2}$. So, \begin{eqnarray}
	\nonumber p(q_1+1)&=&q(q_3+1)\\\nonumber i.e.,\quad q&=&\dfrac{p(q_1+1)}{q_3+1}\\\nonumber&\geq  &\dfrac{2(q_1+1)}{q_3+1}.
\end{eqnarray}
This proves {\em Claim-1}. 
\par\vspace{0.1in}\noindent\textbf{Claim-2:} ${ \overline{N}(r,d_1;f|g'=0,g= \alpha)\leq  \dfrac{1}{q_1+1}N(r, \alpha;g)}$. 
\par\noindent\textit{Proof.} It follows directly from \eqref{kkl21}.\par\vspace{0.2in} Therefore, using {\em Claim-1,Claim-2}, in \eqref{kkl26}, we get \begin{eqnarray}\label{kkl261}
	\overline{N}(r,d_1;f|g\neq d_1)&\leq  &\dfrac{q_3+1}{2(q_1+1)}N(r,d_3;g)+\dfrac{1}{q_1+1}N(r, \alpha;g).
\end{eqnarray}
Again, \begin{eqnarray}\label{kkl27}
	\overline{N}(r,d_3;f|g\neq d_3)&\leq & \overline{N}(r, d_3;f|g'=0,g=d_1)+ \overline{N}(r,d_3;f|g'=0,g= \alpha).
\end{eqnarray}
\par\noindent\textbf{Claim-3:} ${ \overline{N}(r,d_3;f|g'=0,g=d_1)\leq  \dfrac{q_3+1}{2(q_1+1)}N(r,d_3;f)}$.\par\noindent \textit{Proof.} Let ${f(z_0)=d_3}$ of multiplicity ${p}$ and ${g(z_0)=d_1}$ of multiplicity ${q(\geq  2)}$. Therefore, from \eqref{kkl21} \begin{eqnarray}
	\nonumber p(q_3+1)&=&q(q_1+1)\\\nonumber p&=&\dfrac{q(q_1+1)}{q_3+1}\\\nonumber&\geq  &\dfrac{2(q_1+1)}{q_3+1}.
\end{eqnarray}
Hence {\em Claim-3} follows.
\par\noindent\textbf{Claim-4:} ${ \overline{N}(r,d_3;f|g'=0,g= \alpha)\leq  \dfrac{1}{q_3+1}N(r, \alpha;g)}$. \par\noindent\textit{Proof.} Directly follows from \eqref{kkl21}.\par\vspace{0.2in} Therefore, using {\em Claim-3, Claim-4}, in \eqref{kkl27}, we get 
\begin{eqnarray}\label{kkl28}
	\overline{N}(r,d_3;f|g\neq d_3)&\leq  &\dfrac{q_3+1}{2(q_1+1)}N(r,d_3;f)+\dfrac{1}{q_3+1}N(r, \alpha;g).
\end{eqnarray}
\par\noindent\textbf{Claim-5:} ${ \overline{N}(r,d_2;f|g\neq d_2)\leq   \overline{N}_0(r,0;g'|f=d_2)}$.\par\vspace{0.1in}\noindent\textit{Proof.} Let ${f(z_0)=d_2,g(z_0)\neq d_2}$. Then, from {\em Lemma \ref{lem4}}, we must have ${g'(z_0)=0}$. Again, as ${P(d_1)=P(d_3)=P( \alpha)}$ and ${P(d_2)\neq P(d_1)}$, from ${P(f)=P(g)}$, we get ${g(z_0)\neq d_1,d_2,d_3, \alpha}$. Hence {\em Claim-5} holds.
\par\vspace{0.2in} Let us define \begin{eqnarray}\label{kkl29}
	\xi&=&\dfrac{1}{f-C}-\dfrac{1}{g-C},
\end{eqnarray}
where ${C(\neq d_1,d_2,d_3, \alpha)\in  \mathbb{C}}$.
\par\vspace{0.1in}\textbf{Case-1:} Let ${\xi=0}$. Then ${f=g}$.
\par\vspace{0.1in}\textbf{Case-2:} Let ${\xi\neq 0}$.
Applying the second fundamental theorem and then using \eqref{kkl26}, \eqref{kkl28}, \eqref{kkl23},  {\em Claim-5} and ${T(r,g)=T(r,f)+S(r,f)}$, we have\begin{eqnarray}\label{kkl30}
	3T(r,f)&\leq  & \sum\limits_{j=1}^{3} \overline{N}(r,d_j;f)+ \overline{N}(r, \alpha;f)+ \overline{N}(r, \infty;f)-N_0(r,0;f')+S(r,f)\\\nonumber&\leq  & \left[{\sum\limits_{j=1}^{3} \overline{N}(r,d_j;f|g=d_j)+ \overline{N}(r, \alpha;f|g= \alpha)}\right]\\\nonumber&&+\left[{\sum\limits_{j=1}^{3} \overline{N}(r,d_j;f|g\neq d_j)}+\dfrac{1}{q_3+1}N(r, \alpha;f) \right]\\\nonumber&&+ \overline{N}(r, \infty;f)-N_0(r,0;f')+S(r,f)\\\nonumber&\leq  &N(r,0;\xi)+\dfrac{q_3+1}{2(q_1+1)}N(r,d_3;g)+\dfrac{q_3+1}{2(q_1+1)}N(r,d_3;f)\\\nonumber&&+\left({\dfrac{1}{q_1+1}+\dfrac{1}{q_3+1}}\right)N(r, \alpha;g)+ \overline{N}_0(r,0;g')\\\nonumber&&-N_0(r,0;f')+S(r,f)\\\nonumber&\leq  &\left({2+\dfrac{q_3+1}{q_1+1}+\dfrac{1}{q_1+1}+\dfrac{1}{q_3+1}}\right)T(r,f)\\\nonumber&&+ \overline{N}_0(r,0;g')-N_0(r,0;f')+S(r,f).
\end{eqnarray}
Similarly, \begin{eqnarray}\label{kkl31}
	3T(r,g)&\leq  &\left({2+\dfrac{q_3+1}{q_1+1}+\dfrac{1}{q_1+1}+\dfrac{1}{q_3+1}}\right)T(r,g)\\\nonumber&&+ \overline{N}_0(r,0;f')-N_0(r,0;g')+S(r,g).
\end{eqnarray}
Adding \eqref{kkl30} and \eqref{kkl31}, 
we have \begin{eqnarray}
	\nonumber 3[T(r,f)+T(r,g)]&\leq  &\left({2+\dfrac{q_3+1}{q_1+1}+\dfrac{1}{q_1+1}+\dfrac{1}{q_3+1}}\right)[T(r,f)+T(r,g)]+S(r,f)+S(r,g),
\end{eqnarray}
which is a contradiction if \begin{eqnarray}
	\label{kkl32} 3&>&\left({2+\dfrac{q_3+1}{q_1+1}+\dfrac{1}{q_1+1}+\dfrac{1}{q_3+1}}\right).
\end{eqnarray}
Again, we have \begin{eqnarray}\label{kkl33}
	q_3<q_1<2q_3+1.
\end{eqnarray}
Solving \eqref{kkl32} and \eqref{kkl33}, we get \begin{eqnarray}
	\nonumber q_1>5 \quad\text{ and }\quad  \dfrac{q_1-1}{2}<q_3<\dfrac{q_1-2}{2}+\dfrac{\sqrt{q_1^2-4q_1-4}}{2}.
\end{eqnarray}
This completes the proof of the theorem. 
 \section*{\Large{Proof of Theorem \ref{tt7}}}
 Let ${P(z)=z^4+a_3z^3+a_2z^2+a_1z+a_0}$ be a four degree UPE. Then from {\em Theorem C}, we get\begin{eqnarray}\label{vg1}
 	\dfrac{a_3^3}{8}-\dfrac{a_2 a_3}{2}+a_1 &\neq& 0.
 \end{eqnarray}
 We shall show that ${P(z)}$ is CIP. On the contrary, let ${P(z)}$ is NCIP. Then, there exist ${x}$ and ${y}$, such that \begin{eqnarray}\label{vg2}
 	P'(x)&=&0,\\\label{vg3} P'(y)&=&0,\\\label{vg4}P(x)&=&P(y),
 \end{eqnarray}
 with ${x\neq y}$. Now eliminating ${x}$ and ${y}$ from \eqref{vg2}-\eqref{vg4}, we get ${\dfrac{a_3^3}{8}-\dfrac{a_2 a_3}{2}+a_1=0}$, which contradicts \eqref{vg1}. Therefore, ${P(z)}$ is CIP.\par Conversely, let ${P(z)}$ be a  CIP with derivative index greater equal to two. We shall show that ${P(z)}$ is UPE. That is, we have to show \eqref{vg1} holds. On the contrary,  let
 \begin{eqnarray}\label{vg5}
 	\dfrac{a_3^3}{8}-\dfrac{a_2 a_3}{2}+a_1&=&0.
 \end{eqnarray}
 Then \begin{eqnarray}\label{vg6}
 	P(z)&=&z^4+a_3z^3+a_2z^2+\left({\dfrac{a_2a_3}{2}-\dfrac{a_3^3}{8}}\right)z+a_0.
 \end{eqnarray}
 Therefore, solving ${P'(z)=0}$, we get ${z_1=-\dfrac{a_3}{4}}$, ${z_2=\dfrac{1}{4} \left(-\sqrt{3 a_3^2-8 a_2}-a_3\right)}$, and \\ ${z_3=\dfrac{1}{4} \left(\sqrt{3 a_3^2-8 a_2}-a_3\right)}$. Since derivative index is greater equal to two, we must have ${z_2\neq z_3}$. Now, one can verify that \begin{eqnarray}
 	\nonumber P(z_2)=P(z_3)&=&a_0-\frac{1}{64} \left(a_3^2-4 a_2\right)^2,
 \end{eqnarray}
 which is a contradiction as ${P(z)}$ is CIP.\par This completes the proof of the theorem.
 \section*{\Large{Proof of Theorem \ref{tt8}}}
 Here \begin{eqnarray}\label{vb*}
 	P(z)&=&z^5+az^4+bz^3+c.
 \end{eqnarray}
 Since ${P(z)}$ is NCIP, there exist ${x}$ and ${y}$ such that \begin{eqnarray}\label{vb1}
 		P'(x)&=&0,\\\label{vb2} P'(y)&=&0,\\\label{vb3}P(x)&=&P(y).
 \end{eqnarray}
 Eliminating ${x}$ and ${y}$ from \eqref{vb1}-\eqref{vb3}, we get ${a\neq 0}$ and \begin{eqnarray}\label{vb4}
 \left(a^2-4 b\right) \left(8 a^2-5 b\right)&=&0.
 \end{eqnarray}
\par Since ${8a^2\neq 5b}$, from \eqref{vb4}, we must have ${a^2=4b}$, and hence ${P(z)=z^5+az^4+\dfrac{a^2}{4}z^3+c}$. Let \begin{eqnarray}\label{vb6}
	f_1&=&\frac{-ae^{3 z}-a \left(e^{2 z}+1\right) \left(e^{4 z}+1\right)}{2 \left(e^{2 z}+e^{4 z}+e^{6 z}+e^{8 z}+1\right)}\quad  \text{ and }\quad g_1=e^{2z} f_1.
\end{eqnarray}
Now, one can verify that ${P(f_1)=P(g_1)}$ but ${f_1\neq g_1}$.\par This completes the proof of the theorem.
      	 \section{Acknowledgement} 
      	     Sanjay Mallick is thankful to``Science and Engineering Research Board, Department of Science and Technology, Government of India"  for financial support to pursue this research work under the Project File No. EEQ/2021/000316. 	
      	     \section{Competing interests}
      	     The authors have no relevant financial or non-financial interests to disclose.  	  		
 
\end{document}